\documentclass[11pt, a4paper, reqno]{amsart}

\usepackage{amscd}
\usepackage{dsfont}
\usepackage{thmtools}
\usepackage{thm-restate}
\usepackage{amsmath, upgreek}
\usepackage{amstext}
\usepackage{amsthm}
\usepackage{amssymb}
\usepackage{amsopn}
\usepackage{amssymb}
\usepackage{amsxtra}
\usepackage{amsfonts}
\usepackage{fancyhdr}
\usepackage{paralist, epsfig}
\usepackage[mathcal]{euscript}
\usepackage[english]{babel}
\usepackage{csquotes}
\usepackage{bbold}
\usepackage[scr=rsfso]{mathalpha}
\usepackage{dutchcal}

\usepackage{setspace}
\usepackage[dvipsnames]{xcolor}
\usepackage{url}

\usepackage{tikz}
\usepackage{tikz-cd}
\usepackage{tkz-euclide}
\usepackage{euscript,mathtools}

%\usepackage[
%    backend=biber,
%    style=alphabetic,
%    sorting=nyt
%]{biblatex}
%\addbibresource{Ref-cmp.bib}

\usepackage{hyperref}
\usepackage[capitalize]{cleveref}
\hypersetup{
    colorlinks=true,
    linkcolor=MidnightBlue,
    urlcolor=NavyBlue,
    citecolor=Maroon
}

\oddsidemargin27mm
\evensidemargin27mm
\setlength{\textwidth}{157mm}
\setlength{\textheight}{250mm}
\setlength{\hoffset}{-26mm}
\setlength{\voffset}{-16mm}
\setlength{\footskip}{8mm}
\setlength{\headheight}{0mm}

\makeatletter
\g@addto@macro\normalsize{%
  \setlength\abovedisplayskip{10pt}
  \setlength\belowdisplayskip{10pt}
  \setlength\abovedisplayshortskip{5pt}
  \setlength\belowdisplayshortskip{8pt}
}

\newtheorem{thm}{Theorem}[section]
\Crefname{thm}{Theorem}{Theorems}
\newtheorem{cor}[thm]{Corollary}
\Crefname{cor}{Corollary}{Corollaries}
\newtheorem{lem}[thm]{Lemma}
\Crefname{lem}{Lemma}{Lemmas}
\newtheorem{prop}[thm]{Proposition}
\Crefname{prop}{Proposition}{Propositions}

\Crefname{quest}{Question}{Questions}

\Crefname{cons}{Construction}{Constructions}

\theoremstyle{definition}
\newtheorem{ex}[thm]{Example}
\Crefname{ex}{Example}{Examples}
\newtheorem{rmk}[thm]{Remark}
\Crefname{rmk}{Remark}{Remarks}
\newtheorem{dfn}[thm]{Definition}
\Crefname{dfn}{Definition}{Definitions}

\Crefname{nttn}{Notation}{Notations}

% Abkuerzungen

\newcommand{\NN}{\mathbb{N}}

\renewcommand{\theta}{\vartheta}
\renewcommand{\epsilon}{\varepsilon}

\newcommand{\ran}{\operatorname{ran}\nolimits}
\newcommand{\id}{\operatorname{id}\nolimits}

\DeclareMathOperator*{\colim}{colim}
\newcommand{\op}{\operatorname{op}\nolimits}

\fancyhead{}
\fancyhead[RE]{}
\fancyhead[LO]{}
\fancyhead[RO,LE]{}
\fancyfoot[C]{$\scriptstyle\thepage$}

\pagestyle{fancy}

\newcommand{\Born}{\mathscr{B}}
\newcommand{\Comp}{\mathscr{C}}
\newcommand{\Domp}{\mathscr{D}}
\newcommand{\Qomp}{\mathscr{Q}}
\newcommand{\Eomp}{\mathscr{E}}

\newcommand{\Pow}{\mathscr{P}}

% Yo / Hiragana
\DeclareFontFamily{U}{dmjhira}{}
\DeclareFontShape{U}{dmjhira}{m}{n}{
  <-> dmjhira
}{}
\DeclareFontSubstitution{U}{dmjhira}{m}{n}
\DeclareRobustCommand{\yo}{\textup{\usefont{U}{dmjhira}{m}{n}\symbol{"48}}}
\newcommand\cmpuly[1]{(#1)_{\operatorname{cmp}}}
\newcommand\topuly[1]{(#1)_{\operatorname{top}}}

% Categories

\newcommand{\catstyle}[1]{\mathbcal{\hspace{0.05em}{#1}\hspace{0.05em}}}

\newcommand{\catC}{\catstyle{C}}
\newcommand{\catD}{\catstyle{D}}
\newcommand{\catI}{\catstyle{I}}
\newcommand{\catJ}{\catstyle{J}}

\newcommand{\Fun}{\catstyle{F\hspace{-0.1em}un}}
\newcommand{\Set}{\catstyle{Set}}
\newcommand{\Top}{\catstyle{T\!op}}
\newcommand{\Grp}{\catstyle{Grp}}

\newcommand{\RMod}{\catstyle{Mod}_{\!R}}

\newcommand{\KAlg}{\catstyle{Alg}_{\mathbb{k}}}

\newcommand{\PSh}[1]{{\Fun(#1^{\op}, \Set)}}
\newcommand{\Cond}{{\catstyle{Cond}}}
\newcommand{\qsCond}{{\catstyle{qsCond}}}
\newcommand{\Cmp}{\catstyle{Cmp}}

\newcommand{\CHaus}{\catstyle{CHaus}}
\newcommand{\cgTop}{\catstyle{CG}}
\newcommand{\CGWH}{\catstyle{C\hspace{-0.1em}G\hspace{-0.1em}W\hspace{-0.1em}H}}

\newcommand{\into}{\hookrightarrow}

\newcommand{\onto}{\twoheadrightarrow}

\newcommand{\adjarrow}{\longleftrightarrow}

%%%%%%%%%%% Brueche und Summenzeichen in verschiedenen Groessen

%1.2

\usepackage{xstring}
% \scriptscriptstyle

\DeclareMathOperator*{\fprod}{%
  \mathchoice% http://tex.stackexchange.com/q/148740/5764
    {\raisebox{-.0\height}{\scalebox{1.15}{$\prod$}}}% \displaystyle
    {\raisebox{-.0\height}{\scalebox{1}{$\prod$}}}% \textstyle
    {\raisebox{-.0\height}{\scalebox{0.75}{$\prod$}}}% \scriptstyle
    {\raisebox{-.05\height}{\scalebox{0.55}{$\prod$}}}}% \scriptscriptstyle

\DeclareMathOperator*{\fsqcup}{%
  \mathchoice% http://tex.stackexchange.com/q/148740/5764
    {\raisebox{-.0\height}{\scalebox{1.15}{$\bigsqcup$}}}% \displaystyle
    {\raisebox{-.0\height}{\scalebox{1}{$\bigsqcup$}}}% \textstyle
    {\raisebox{-.0\height}{\scalebox{0.75}{$\bigsqcup$}}}% \scriptstyle
    {\raisebox{-.05\height}{\scalebox{0.55}{$\bigsqcup$}}}}% \scriptscriptstyle

\DeclareMathOperator*{\fcoprod}{%
  \mathchoice% http://tex.stackexchange.com/q/148740/5764
    {\raisebox{-.0\height}{\scalebox{1.15}{$\coprod$}}}% \displaystyle
    {\raisebox{-.0\height}{\scalebox{1}{$\coprod$}}}% \textstyle
    {\raisebox{-.0\height}{\scalebox{0.75}{$\coprod$}}}% \scriptstyle
    {\raisebox{-.05\height}{\scalebox{0.55}{$\coprod$}}}}% \scriptscriptstyle

%%%%%%%%%%% Textcircled

\renewcommand*\textcircled[1]{\tikz[baseline=(char.base)]{
            \node[shape=circle,draw,inner sep=1pt] (char) {#1};}}

\setlength{\marginparwidth}{2cm}

% Tikz and tikz-cd commands %
	\usetikzlibrary{matrix,arrows,decorations,decorations.pathmorphing,positioning,decorations.pathreplacing,shapes}
	\tikzset{commutative diagrams/.cd, 
		mysymbol/.style = {start anchor=center, end anchor = center, draw = none}}

\DeclareMathSymbol{\sm}{\mathbin}{AMSa}{"39}
\DeclareMathSymbol{\shortminus}{\mathbin}{AMSa}{"39}

%nNeue Makros

\newcommand{\diam}{\hfill\raisebox{-0.75pt}{\scalebox{1.4}{$\diamond$}}}

\usepackage{enumitem}

\newlist{myitemize}{itemize}{1}
\setlist[myitemize,1]{leftmargin=26pt, noitemsep, topsep=0pt}

\newcommand{\invamalg}{\mathbin{\text{\rotatebox[origin=c]{180}{$\amalg$}}}}

\DeclareMathOperator*{\fcup}{%
  \mathchoice% http://tex.stackexchange.com/q/148740/5764
    {\raisebox{-.0\height}{\scalebox{1.15}{$\bigcup$}}}% \displaystyle
    {\raisebox{-.0\height}{\scalebox{1}{$\bigcup$}}}% \textstyle
    {\raisebox{-.0\height}{\scalebox{0.75}{$\bigcup$}}}% \scriptstyle
    {\raisebox{-.05\height}{\scalebox{0.55}{$\bigcup$}}}}% 

\DeclareMathOperator*{\fcap}{%
  \mathchoice% http://tex.stackexchange.com/q/148740/5764
    {\raisebox{-.0\height}{\scalebox{1.15}{$\bigcap$}}}% \displaystyle
    {\raisebox{-.0\height}{\scalebox{1}{$\bigcap$}}}% \textstyle
    {\raisebox{-.0\height}{\scalebox{0.75}{$\bigcap$}}}% \scriptstyle
    {\raisebox{-.05\height}{\scalebox{0.55}{$\bigcap$}}}}% \scriptscriptstyle

\linespread{1.1}

\begin{document}
\allowdisplaybreaks
$ $
\vspace{-40pt}

\title{Condensed mathematics through\\[2pt]compactological spaces}

%\author{Author 1\hspace{0.5pt}\MakeLowercase{$^{\text{1}}$} and Author 2\hspace{0.5pt}\MakeLowercase{$^{\text{2}}$}}

\author{Franziska B\"ohnlein\hspace{0.5pt}$^{\text{1,\hspace{0.5pt}2}}$, Benjamin Bruske\hspace{0.5pt}$^{\text{1,\hspace{0.5pt}2}}$ and Sven-Ake\ Wegner\hspace{0.5pt}$^{\text{1,\hspace{0.5pt}2,\hspace{0.5pt}3}}$}

%\hspace{0.5pt}\MakeLowercase{$^{\text{1}}$}

\renewcommand{\thefootnote}{}
\hspace{-1000pt}\footnote{\hspace{5.5pt}2020 \emph{Mathematics Subject Classification}: 46M15, 46A17, 18F60, 18A25, 54B30, 54E99, 55U40.\vspace{1.6pt}}

% 46M15 Categories, functors in functional analysis
% 46A17 Bornologies and related structures; Mackey convergence, etc.

% 18F60 Categories of topological spaces and continuous mappings [See also 54-XX]
% 18A25 Functor categories, comma categories

% 54B30 Categorical methods in general topology
% 54E99 Topological spaces with richer structures: None of the above, but in this section

% 55U40 Topological categories, foundations of homotopy theory

\hspace{-1000pt}\footnote{\hspace{5.5pt}\emph{Key words and phrases}: compactological space, bornological space, condensed set, cgwh-space. \vspace{1.6pt}}

\hspace{-1000pt}\footnote{\hspace{0pt}$^{1}$\,University of Hamburg, Department of Mathematics, Bundesstra\ss{}e 55, 20146 Hamburg, Germany; franzis-\newline\phantom{x}\hspace{1.2pt}ka.boehnlein@uni-hamburg.de, benjamin.bruske@studium.uni-hamburg.de, sven.wegner@uni-hamburg.de.\vspace{1.6pt}}

\hspace{-1000pt}\footnote{\hspace{0pt}$^{2}$\,This research was funded by the German Research Foundation\,(DFG); Project no.~507660524.\vspace{1.6pt}}

\hspace{-1000pt}\footnote{\hspace{0pt}$^{3}$\,Corresponding author.}

\begin{abstract}
In their 2022 lecture notes on condensed sets, Clausen and Scholze mentioned in a remark that the important subclass of quasiseparated condensed sets is equivalent to the category of so-called compactological spaces defined by Waelbroeck in the 1960s. In this paper we survey the latter category in detail, we give a rigorous proof of Clausen and Scholze's claim, and we establish that condensed sets are a formal categorical completion of Waelbroeck's compactological spaces. The latter answers a question asked by Hanson in 2023 and permits the interpretation of compactological sets as an `elementary' approach to condensed mathematics.
\end{abstract}

\vspace{-15pt}

\maketitle

\vspace{-15pt}

\section{Introduction}\label{SEC-Intro}

Compactological sets, back then under the name of compactological spaces, were introduced in the context of functional analysis by Waelbroeck \cite{Waelbroeck60, Waelbroeck1967, Waelbroeck} and Buchwalter \cite{Buchwalter65, BuchwalterDiss, Buchwalter69} in the 1960s as a set furnished with a collection of subsets, that all carry the structure of compact Hausdorff spaces, and which satisfy natural coherence conditions. Formally, we write $(X,\Comp,(\uptau_K)_{K\in\Comp})$ for a compactological set and refer to the collection $\Comp$ as its compactology. Due to the coherence conditions, the $\uptau_K$ induce a topology $\uptau$ on $X$ such that all $K\in\Comp$ are compact subspaces of $(X,\uptau)$; the collection $\Comp$ thus generalizes the compact subsets of a topological (cgwh-)space. Indeed, compactological sets refine the latter class of spaces, since designating the collection $\Comp$ imprints an additional structure onto the topological space. From another perspective, being contained in a set that belongs to $\Comp$ may be regarded as a notion of `smallness' that, again due to the coherence conditions, interacts well with the topological notion of closeness. The small sets in this sense form a bornology as introduced and studied extensively by Waelbroeck, Buchwalter (loc.~cit.), Hogbe-Nlend \cite{Hogbe70,Hogbe-French, Hogbe} and Houzel \cite{Houzel}.

\smallskip

The aformentioned authors considered not only sets endowed with compactologies or bornologies, but extended the notion to vector spaces and other algebraic structures. Although bor\-no\-lo\-gi\-cal spaces never became really popular among functional analysts, they have been used regularly over time, in particular at the interface between (homological) algebra, (functional) analysis and geometry, see e.g.~\cite{Meyer2, BOBB, BBBK18, B19, KKM, ben2024perspective, Stempfhuber, KNW25, Savage} for a sample of recent publications. In a nutshell, the reason for this is that, from a categorical point of view, bornological vector spaces behave significantly better than their topological siblings.

\smallskip

Another approach to bypass the somewhat obstinate categorical properties of (locally convex) topological vector spaces, groups or modules is the paradigm of condensed mathematics due to Clausen and Scholze \cite{ScholzeCondensedMath}, which is\,---\,surprisingly or not\,---\,intimately related to Waelbroeck and Buchwalter's theory of compactological sets. In this article we will develop the theory of compactological sets from a topological perspective, construct and prove the categorical equivalence between the category of compactological sets and the category of quasiseparated condensed sets, as noted without proof in \cite[p.~15]{ScholzeComplexGeometry}, and finally extend this result to a universal presentation of condensed sets as a formal categorical completion of compactological sets. The latter answers a question asked by Hanson \cite{Hanson}.

\smallskip

In the current article we will focus on the relation between condensed and compactological sets; the case of condensed and compactological groups, rings, modules etc.\ will be treated in a forthcoming paper. Based on the `topological foundation' provided here, the second paper will offer an accessible characterization of the condensed algebraic objects of interest, extending the main theorem of the present article and connecting it with previous work of the last named author \cite{Wegner17} in order to obtain a simple and workable description in terms of fractions. We will show that these can be manipulated efficiently and thereby provide a means to perform computations in the condensed world in terms of classical objects.

\smallskip

Let us now give a short summary of the article and its main results. In Section \ref{SEC:Cmp} we will start by spelling out Buchwalter's definition of compactological sets (this is the notion we will use throughout the article) and give an overview of different, but equivalent, definitions. We include a new characterization which shows that the whole data of a compactological set $(X,\Comp,(\uptau_{K})_{K\in\Comp})$ is already encoded in the system $\Comp$ of `plain sets', see \cref{DFN-COMP-4}. We then give explicit descriptions of subspaces, quotients, products, coproducts, limits and colimits within the category $\Cmp$ of compactological sets and show in particular that the latter is complete and cocomplete, see \cref{cor:cmp-bicomp}. In parallel we investigate the relation to the category $\CGWH$ of compactly generated weak Hausdorff spaces in the sense defined by McCord \cite{McCord}. This amounts to the following result in which $\upiota$ endows a cgwh-space with the collection $\Comp$ of all compacts while $\topuly-$ takes a compactological set $(X,\Comp,(\uptau_{K})_{K\in\Comp})$, furnishes $X$ with the final topology of the inclusions $(K,\uptau_K)\hookrightarrow X$, and then forgets $\Comp$ and the $\uptau_K$.

\begin{restatable*}[]{thm}{ThmAdjCGWHCmp}\label{CGWH-COMPL-ADJ} 
    The two functors $\topuly-$ and $\upiota$ are well-defined and form an adjoint pair:
    \begin{center}
        \begin{tikzcd}[column sep=large, ampersand replacement=\&]
           \CGWH
            \arrow[r, "\upiota"{name=F}, bend left=17] \&
            \Cmp.
            \arrow[l, "(-)_{\operatorname{top}}"{name=G}, bend left=17]
            %--- Adjunction Symbol
            \arrow[phantom, from=F, to=G, "\dashv" rotate=90]
        \end{tikzcd}
    \end{center}
    Moreover, $\upiota$ is fully faithful, it holds $(\upiota(-))_{\operatorname{top}} = \id_{\CGWH}$ and therefore we may read $\CGWH$ as a reflective subcategory of $\Cmp$. In particular, we may canonically understand the compact spaces $\CHaus\into\CGWH\into\Cmp$ as compactological sets endowed with the compactology of all compact subsets, each carrying the  topology induced by $K$.
\end{restatable*}

\noindent We conclude Section \ref{SEC:Cmp} by showing that every compactological space can be canonically written as a filtered colimit of compact Hausdorff spaces with embeddings as transition maps, or, more precisely, that $\Cmp\cong\catstyle{Ind_{emb}}(\CHaus)$ holds, see Propositions \ref{prop:cmp-gen-compact} and \ref{prop:ind-cmp}. 

\smallskip

Section \ref{SEC:Equiv} starts with a recap of the basics on condensed sets. We follow Clausen and Scholze's first set of lecture notes \cite{ScholzeCondensedMath} and repeat several fundamental results without proofs for later reference. As the study of functor categories requires caution in order to avoid set-theoretic conflicts, we have to work with some suitable size restriction, i.e., consider for example only the category $\CHaus_{\upkappa}$ of compact Hausdorff spaces of cardinality less than $\upkappa$. The main result of Section \ref{SEC:Equiv} is the following theorem, in which we prove the aformentioned equivalence between the categories of compactological, respectively quasiseparated, condensed sets. Below, $\upkappa$ is assumed to be a strong limit cardinal, $\yo(C):=\Cmp_{\upkappa}(-,C)|_{\CHaus}$ is the Yoneda embedding restricted to compact Hausdorff spaces, and $\cmpuly{X}$ is the underlying set endowed with the compactology $\Comp_{X} := \{ \ran(\upphi_*)\subseteq X(*) \mid \upphi \in X(K),\, K\in\CHaus_\upkappa\}$.

\begin{restatable*}[]{thm}{thmcmpcondqs}\label{thm:cmp-condqs}  
    The restricted Yoneda embedding and assigning the underlying compactological set to a condensed set are quasi-inverses inducing an equivalence between $\upkappa$-compactological and quasiseparated $\upkappa$-condensed sets:
    \[
        \yo\colon \Cmp_\upkappa \overset\sim\adjarrow \qsCond_\upkappa\hspace{1pt}:\cmpuly-.
    \]
    These equivalences extend along the directed colimits to an equivalence between the unrestricted category $\Cmp$ of all compactological sets and the category $\Cond$.%, as constructed in \cite[Def.~2.11]{ScholzeCondensedMath}}
\end{restatable*}

\noindent{} We refer to \cref{rem:cmp-size,rem:cond-cond} for details on the category $\Cond$ of 'all condensed sets', i.e. without any size restriction. Let us mention that Stempfhuber \cite{Stempfhuber} recently too surveyed the correspondence between quasiseparated condensed and compactopological sets. His work builds on Waelbroeck's definition while we follow Buchwalter's approach, cf.~\cref{RMK-HIST}; the latter aligns better with the notion of cgwh-spaces and gives thus a more topological perspective. Furthermore, we provide some details that we believe to be both genuinely new and of independent mathematical interest.

\smallskip

After examining compactological sets in relation to condensed objects, Section \ref{SEC:CmpCat} will analyze the category of compactological sets in its own right. We demonstrate that it inherits many good properties from $\CGWH$, being among others total, cartesian closed, regular, coregular and extensive. Finally, we universally extend $\Cmp$ to a Barr-exact category. The results of \cref{SEC:Cmp,SEC:Equiv}, taken together, culminate in the third main theorem of this paper, which resolves a question of James E.\ Hanson \cite{Hanson} and precisely delineates how condensed sets are a natural extension of topological objects:

%After examining compactological sets in relation to condensed objects, Section \ref{SEC:CmpCat} will analyze the category of compactological sets in its own right. We demonstrate that it inherits many nice properties from the category of cgwh-spaces, being among others total, cartesian closed, regular, coregular and extensive. Finally, we investigate the result of forcing the category to be any \enquote{nicer} by universally extending it to a Barr-exact category. We obtain the main result of this paper, tying together \cref{SEC:Cmp,SEC:Equiv}, \NOTE{Benjamin}{I'm only semi-happy with this. Suggestions?}\textcolor{red}{offering a further topologically minded justification for the use of condensed sets,} and affirming James E. Hanson's hypothesis on Mathematics Stack Exchange:

\begin{restatable*}[]{thm}{thmcmpregex}\label{thm:cmp-reg-ex}
    The category $\Cond_\upkappa$ of $\upkappa$-condensed sets is the \emph{ex/reg}-completion of the category $\Cmp_\upkappa$ of $\upkappa$-compactological sets. Explicitly it is the universal Barr-exact category containing compactological sets as a full subcategory.
    The functor $\yo\colon\Cmp_\upkappa\into\Cond_\upkappa$, embedding the category of compactological sets into its \emph{ex/reg}-completion, preserves coequalizers of kernel pairs and is hence a regular functor. 
    Analogously, the category $\Cond$ of all condensed sets is the ex/reg completion of the category $\Cmp$ of all compactological sets.

    %\textcolor{red}{This result canonically extends along the directed colimits to an equivalence between the ex/reg completion of the category $\Cmp$ of all compactological sets and the category $\Cond$ of all condensed sets, as constructed in \cite[Def.~2.11]{ScholzeCondensedMath}.}
\end{restatable*}

%\vspace{10mm}

%\begin{enumerate}
%    \item Category Theory: \cite{CatContext,MacLane}.
%    \item Topology: General: \cite{Dieck}, \cite{Kelley}, cgwh-spaces: \cite[Appendix A]{Schwede}, \cite{CGWH}, \cite{McCord}, \cite{Rezk}, \cite{Lewis}.
%    \item Cardinality Topics: Shulman \cite{Shulman}
%    \item Condensed Stuff: Asgeirsson \cite{Asgeir}, \cite{ScholzeCondensedMath, ScholzeComplexGeometry}, \cite[Chapter 6]{Stempfhuber}, \cite{Commelin}
%    \item Barwick and Haine study the same construction in order to study topological objects in the setting of $\infty$-category \cite{Barwick}
%    \item Bornology: \cite{Hogbe, Hogbe-French}
%    \item Hanson's question \cite{Hanson}
%    \item Abgrenzung zu Stempfhuber: Topologische Perspektive (Benutzung von Buchwalter's Definition waehrend Stemppfhuber Waelbroeck's Definition benutzt) und wir geben detaillierte Beweise die ohne Referenz auf Hogbe-Nlend auskommen (stattdessen Argumente die die cgwh-Eigenschaft nutzen). Minor corrections. We demonstrate that compactological spaces behave very similarly to cgwh and thus might play a similar role as cgwh in algebraic topology. We give a proof from first principles for 3.17.
%\end{enumerate}

For unexplained notation from category theory we refer to \cite{CatContext, MacLane}. Concerning topology we follow the standard notation, e.g.~\cite{Dieck, Kelley}, and refer to \cite{Schwede,CGWH,McCord,Rezk,Lewis} for details on cgwh-spaces. For readers unfamiliar with cardinality issues and how to resolve them we recommend \cite{Shulman}. Besides the original lecture notes by Clausen and Scholze, more detailed introductions to condensed mathematics can be found in \cite{Asgeir, Stempfhuber, LeStum}. 

\smallskip

Disclaimer: In order to make this article accessible to an audience as wide as possible, we included several well-known results at the beginning and give complete and self-contained proofs from first principles in Sections \ref{SEC:Cmp}\hspace{1pt}--\hspace{1pt}\ref{SEC:Equiv}. In particular, we avoid references to the original sources by Waelbroeck and Buchwalter, as they are in places arduous to read due to varying nomenclature, notation and language. In Section \ref{SEC:Cmp} we avoid heavy categorical machinery to promote condensed sets also among less categorically inclined mathematicians\,---\,using Theorems \ref{thm:cmp-condqs} and \ref{thm:cmp-reg-ex} as black boxes, Section \ref{SEC:Cmp} can be read as an elementary approach to condensed mathematics. 

\smallskip

Finally, we like to point out that there is a parallel theory to condensed mathematics due to Barwick and Haine \cite{Barwick}. Their work agrees in much of the setup, but takes a different perspective in its presentation. Beyond this, characteristic of their work and contrasting the theory of condensed mathematics is the use of strongly inaccessible cardinals, of which sufficiently many are assumed to exist, as well as the choice to work in the setting of $\infty$-categories from the beginning. In the present work we purposefully restrict ourselves to a $1$-categorical perspective in order to keep the entry barrier low.  We have further opted for the more explicit treatment of set-theoretic conflicts in line with Scholze and Clausen as these specialize to the approach based on multiple universes, but, in their full generality, require some additional care. Because of this alignment we stick to the terminology used by Scholze and Clausen for the whole article.

% \section{Preliminaries}\label{SEC:Pre}
%
% General reference to category theory(\cite{CatContext,MacLane}) and topology books; some specific references for topics beyond the usual cat theory (e.g. Ind-objects, localization, $\kappa$-bimbam). Fix notion of compactness, recap CGWH, refer to Strickland (\cite{CGWH}). 

\section{Introduction to compactological sets}\label{SEC:Cmp}

Throughout the whole article we will follow the `French convention', i.e., a quasi-compact topological space satisfies the covering property and a compact topological space is additionally Hausdorff. Below we start with Buchwalter's 1969 definition of what he called a `compactologie', respectively an `espace compactologique'. We will however use the term `compactological \emph{set}' since it is more in line with the language of condensed mathematics.

\begin{dfn}[Buchwalter's definition {\cite[Dfn 1.1.1]{Buchwalter69}}]\label{DFN-COMP-0} Let $X$ be a set. A \textit{compactology} $\Comp$ on $X$ is a non-empty collection of subsets $\Comp\subseteq\mathcal{P}(X)$ endowed with topologies $\uptau_K$ such that each $(K,\uptau_K)$ is compact (and thus in particular Hausdorff) and the following four properties hold:
    \begin{enumerate}
        \item $\Comp$ is a covering of $X$: $\fcup_{K\in\Comp}K=X$,\vspace{2pt}
        \item $\Comp$ is upwards filtering: $\forall\:K_1,K_2\in\Comp\;\exists\:K_3\in\Comp\colon K_1\subseteq K_3\text{ and }K_2\subseteq K_3$, \vspace{2pt}
        \item The topologies are coherent: $\forall\:K_1,K_2\in\Comp \text{ with } K_2\subseteq K_1\colon\uptau_{K_2}=\uptau_{K_1}|_{K_2}$,\vspace{2pt}
        \item $\Comp$ is stable under compact subsets: $\forall\:K_1\in \Comp,\,K_2\subseteq(K_1,\uptau_{K_1})$ compact$\colon K_2 \in  \Comp$.\vspace{1pt}
    \end{enumerate}
The triple $C = (X, \Comp, (\uptau_K)_{K \in \Comp})$ is said to be a \textit{compactological set}.\hfill\diam{}
\end{dfn}

For later reference let us put the following easy observations into writing.

\begin{lem}\label{FIRST-LEM} Let $(X,\Comp,(\uptau_{K})_{K\in\Comp})$ be a compactological set. Then $\Comp$ is stable under finite unions and finite intersections.
\end{lem}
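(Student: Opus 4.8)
The plan is to reduce both assertions to the binary case, since finite unions and finite intersections follow by a trivial induction once $K_1 \cup K_2$ and $K_1 \cap K_2$ are handled. The key idea is to produce, from two given members $K_1, K_2 \in \Comp$, a single compact Hausdorff space inside which both sit as compact subspaces; the two stability statements then become elementary point-set topology combined with property (4).

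Concretely, I would first invoke the upwards filtering property (2) to obtain $K_3 \in \Comp$ with $K_1 \subseteq K_3$ and $K_2 \subseteq K_3$. By the coherence property (3), the topologies $\uptau_{K_1}$ and $\uptau_{K_2}$ coincide with the subspace topologies $\uptau_{K_3}|_{K_1}$ and $\uptau_{K_3}|_{K_2}$; hence $K_1$ and $K_2$ are compact subspaces of the compact Hausdorff space $(K_3, \uptau_{K_3})$. For the union I would then use that a finite union of compact subsets of any topological space is again compact, so that $K_1 \cup K_2$ is a compact subspace of $(K_3, \uptau_{K_3})$; stability under compact subsets, property (4) applied to $K_3$, yields $K_1 \cup K_2 \in \Comp$. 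For the intersection I would exploit the Hausdorff property of $(K_3, \uptau_{K_3})$: compact subsets of a Hausdorff space are closed, so $K_1$ and $K_2$, and hence $K_1 \cap K_2$, are closed in $K_3$; as a closed subset of the compact space $K_3$ it is compact, and property (4) again gives $K_1 \cap K_2 \in \Comp$.

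I do not expect a genuine obstacle. The only point requiring care is that compactness must be checked with respect to a \emph{single} ambient topology rather than for $K_1$ and $K_2$ separately, and this is exactly what the common upper bound $K_3$ together with coherence supplies. The Hausdorffness needed for the intersection argument is automatic under the French convention adopted here, so no extra hypothesis is required.
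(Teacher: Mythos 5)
Your proposal is correct and follows essentially the same route as the paper's proof: pick a common upper bound $K_3$ via \ref{DFN-COMP-0}(2), use the coherence condition \ref{DFN-COMP-0}(3) to view $K_1,K_2$ as compact subspaces of $(K_3,\uptau_{K_3})$, and then apply \ref{DFN-COMP-0}(4). You merely spell out the point-set facts (finite unions of compacts are compact; compacts in a Hausdorff space are closed, hence their intersection is compact) that the paper leaves implicit.
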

\begin{proof} For $K_1,K_2\in\Comp$ we pick $K_3$ as in \ref{DFN-COMP-0}(2) and observe $\uptau_{K_3}|_{K_i}=\uptau_{K_i}$ for $i=1,2$ by \ref{DFN-COMP-0}(3). Thus, $K_1,K_2\subseteq(K_3,\uptau_{K_3})$ are compact. Applying \ref{DFN-COMP-0}(4) we get $K_1\cup K_2,K_1\cap K_2\in\Comp$.
\end{proof}

We add the following partly historical comments.

\begin{rmk}\label{RMK-HIST}\begin{myitemize}\item[(1)] An often cited reference for compactologies is Waelbroeck's textbook \cite[Chapter III]{Waelbroeck}. His definition requires that $\Comp$ is a bornology (i.e., covering, stable under finite unions, stable under arbitrary subsets), that every $K\in\Comp$ carries a topology $\uptau_K$ satisfying the coherence condition \ref{DFN-COMP-0}(3) and that every $L\in\Comp$ is contained in a $K\in\Comp$ such that $(K,\uptau_{K})$ is compact.\vspace{3pt}

\item[(2)] The two notions are not strictly speaking equivalent, but a compactology in the Waelbroeck sense is completely determined by its subsystem of compacts, which is a compactology in the sense of Buchwalter. On the other hand, we may pass from a compactology $\Comp$ in the sense of Buchwalter to a compactology in the Waelbroeck sense by forming the set of all subsets of members of $\Comp$.\vspace{3pt}

\item[(3)] It seems that Waelbroeck \cite[Dfn 4.1]{Waelbroeck1967} was the first to study compactologies, but back then under the name of a `compact boundedness'.\vspace{3pt}

\item[(4)] In the remainder we will hold on to \cref{DFN-COMP-0} but also refer to results given by Waelbroeck and others in the slightly different setting outlined above.\hfill\diam{}
\end{myitemize}
\end{rmk}

The first idea for an example of a compactological set is to start with a topological space and to take for $\Comp$ its compact subsets endowed with the induced topologies. For this to form a well-defined compactology, a certain separation property is necessary. Following Waelbroeck \cite[p.\ 44]{Waelbroeck} the above works for a  $\operatorname{T}_1$-space, but, for good reasons, cf.~\cref{rem:cgwh-cmop}(3), we will assume a little more below. Let us first show that every compactology does indeed induce a topology on the whole set which is coherent with the topologies on its elements. The following result was mentioned in \cite[Prop III.1]{Waelbroeck}; we include a proof for the sake of completeness.

\begin{prop}\label{cmp-final} Let $(X, \Comp, (\uptau_K)_{K \in \Comp})$ be a compactological set. Let $\uptau$ be the final topology on $X$ with respect to the inclusions $\{(K,\uptau_K) \hookrightarrow X \ | \ K \in \Comp \}$. Then $\uptau_K=\uptau|_K$ holds for every $K\in\Comp$.
\end{prop}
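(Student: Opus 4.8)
The plan is to prove the two inclusions $\uptau|_K\subseteq\uptau_K$ and $\uptau_K\subseteq\uptau|_K$ separately, where $\uptau|_K$ denotes the subspace topology induced by $(X,\uptau)$. The first inclusion is immediate from the construction: by definition of the final topology every inclusion $\iota_K\colon(K,\uptau_K)\hookrightarrow(X,\uptau)$ is continuous, so for each $\uptau$-open $U\subseteq X$ the preimage $\iota_K^{-1}(U)=U\cap K$ is $\uptau_K$-open. Since the sets $U\cap K$ with $U\in\uptau$ are exactly the $\uptau|_K$-open sets, this yields $\uptau|_K\subseteq\uptau_K$.

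For the reverse inclusion I would pass to closed sets and show that every $\uptau_K$-closed $A\subseteq K$ is already $\uptau$-closed in $X$; because $A\subseteq K$ gives $A\cap K=A$, this exhibits $A$ as a $\uptau|_K$-closed set and hence delivers $\uptau_K\subseteq\uptau|_K$. By the dual characterisation of the final topology, $A$ is $\uptau$-closed precisely when $A\cap L$ is $\uptau_L$-closed for every $L\in\Comp$, so the task reduces to the local claim that for each $L\in\Comp$ the set $A\cap L$ is closed in $(L,\uptau_L)$.

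To verify this local claim I would use the filtering property \ref{DFN-COMP-0}(2) to pick $K_3\in\Comp$ with $K\subseteq K_3$ and $L\subseteq K_3$, and then apply coherence \ref{DFN-COMP-0}(3) to identify $\uptau_K=\uptau_{K_3}|_K$ and $\uptau_L=\uptau_{K_3}|_L$. The crucial point—and the step I expect to carry the real weight—is that $K$ is a \emph{compact} subspace of the \emph{Hausdorff} space $(K_3,\uptau_{K_3})$ and is therefore $\uptau_{K_3}$-closed; this is exactly where the standing assumption that the $(K,\uptau_K)$ are compact Hausdorff, rather than merely quasi-compact, enters. Granting this, $A$ is closed in $K$ for the subspace topology inherited from $K_3$, and a closed subset of the closed subspace $K\subseteq K_3$ is itself $\uptau_{K_3}$-closed; intersecting with $L$ then shows that $A\cap L$ is closed in $(L,\uptau_{K_3}|_L)=(L,\uptau_L)$, as required.

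In summary, the heart of the argument is the single topological fact that compact subsets of a compact Hausdorff space are closed, combined with the coherence and filtering axioms, which are precisely what allow one to compare the a priori unrelated topologies $\uptau_K$ and $\uptau_L$ inside a common member $(K_3,\uptau_{K_3})$ of the compactology.
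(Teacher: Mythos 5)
Your proof is correct and follows essentially the same route as the paper: both establish the nontrivial inclusion by showing that a $\uptau_K$-closed set is $\uptau$-closed, testing against each $L\in\Comp$ inside a common $K_3$ obtained from the filtering axiom \ref{DFN-COMP-0}(2), with coherence \ref{DFN-COMP-0}(3) and the fact that compact subspaces of Hausdorff spaces are closed doing the real work. The only cosmetic differences are that you handle the easy inclusion directly via continuity of the inclusions (where the paper routes it through the closedness of $K$ in $(X,\uptau)$) and that you replace the paper's appeal to axiom \ref{DFN-COMP-0}(4) by the observation that a closed subset of a closed subspace is closed in the ambient space.
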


\begin{proof} By definition, a set $B\subseteq X$ is $\uptau$-closed iff $B\cap K'$ is $\uptau_{K'}$-closed for any $K'\in\Comp$.

\smallskip

\textcircled{1} As a preparation we first show that every $K\in\Comp$ is $\uptau$-closed: Fix $K\in\Comp$. For arbitrary $K'\in\Comp$ then $K\cap K'\in\Comp$ holds by \cref{FIRST-LEM} and it follows $\uptau_{K\cap K'}=\uptau_{K'}|_{K\cap K'}$ from \ref{DFN-COMP-0}(3). We thus get that $K\cap K'\subseteq(K',\uptau_{K'})$ is closed.

\smallskip

\textcircled{2} Now we show that for fixed $K\in\Comp$ a subset $B\subseteq K$ is $\uptau_K$-closed iff it is $\uptau|_K$-closed:

\smallskip

\textquotedblleft{}$\Longrightarrow$\textquotedblright{} If $B\subseteq(K,\uptau_K)$ is closed, then we have $B\in\Comp$ by \ref{DFN-COMP-0}(4). Let $K'\in\Comp$ be arbitrary. By \ref{DFN-COMP-0}(2) there exists $L\in\Comp$ with $K,K'\subseteq L$. Using \ref{DFN-COMP-0}(3) we get that $B$ and $K'$, and thus also $B\cap K'$, are closed in $(L,\uptau_L)$. Using $\uptau_{K'}=\uptau_{L}|_{K'}$ the claim follows.

\smallskip

\textquotedblleft{}$\Longleftarrow$\textquotedblright{} If $B\subseteq(K,\uptau|_{K})$ is closed, then \textcircled{1} implies that $B\subseteq(X,\uptau)$ is closed. By definition, we get that $B\cap K\subseteq(K,\uptau_K)$ is closed.
\end{proof}

The above suggests the following, slightly different, approach to compactological sets.

\begin{rmk}\label{RMK-COMMELIN}\begin{myitemize} \item[(1)] An equivalent definition of compactological sets arises through replacing in \cref{DFN-COMP-0} the system $(\uptau_K)_{K\in\Comp}$ of topologies by a single topology $\uptau$ on $X$ and requiring, instead of \ref{DFN-COMP-0}(3), that $\uptau$ is final with respect to the inclusions $\{(K,\uptau|_{K})\hookrightarrow X\ | \ K\in\Comp\}$. \vspace{3pt}

\item[(2)] For the Waelbroeck setting, see \cref{RMK-HIST}(1), the above has recently been done by Barton, Commelin \cite[Dfn 1.4.5]{Commelin} in their lecture notes on condensed mathematics. They used, essentially, the first line of the proof of \cref{cmp-final} as a definition.

\vspace{3pt}

\item[(3)] We will not alter our definition, but, in the remainder, often refer to the `topology of a compactological set' meaning the topology $\uptau$ from \cref{cmp-final}(2). Later, in \cref{DFN-COMP-4}, we will show that already the collection $\Comp$ of `plain sets' uniquely determines the $\uptau_K$ and from then on drop these from notation, cf.\ \cref{DFN-COMP-RMK}.\hfill\diam{}
\end{myitemize}
\end{rmk}

Let us now answer three natural questions. The first one is if a compactology must always consist of \emph{all} sets that, in its own final topology, are compact. The answer to this is 'no' by the following familiar example:

\begin{ex}\label{EX-1} Let $(X,d)$ be an uncountable metric space. Then the collection $\Comp:= \{ K \subseteq X\:|\:K \text{ is countable and compact}\}$, where each $K$ is furnished with the topology $\uptau_K$ induced by the metric, defines a compactological set $(X, \Comp, (\uptau_K)_{K \in \Comp})$.\hfill\diam{}
\end{ex}

Although we did above not include all compact sets, the final topology as in \cref{cmp-final} coincides nevertheless with the $d$-topology we started with. The second question is if this is always the case; again the answer is 'no':

\begin{ex}[Stempfhuber \protect{\cite[Ex 7.1.9]{Stempfhuber}}] Let $X:=\{ x \in \mathbb{R}^2\:|\:\left \lVert x \right \rVert \leqslant 1\}$ and denote by $d$ the euclidean metric on $X$. Let $\Comp$ be the collection of closed subsets of finite unions of chords in $X$:\vspace{3pt}
\begin{center}
\begin{tabular}{ccccc}
\begin{tikzpicture}
\draw[opacity=0.25] (1.25,1.25) circle [radius=10mm];
\fill[opacity=0.15](1.25,1.25) circle [radius=10mm];
%\draw[black] (0,0) rectangle (2.5,2.5);
  \begin{scope}
    \clip (1.25,1.25) circle [radius=10mm];
    \draw[ultra thick, black!55!white]   (0,2) -- node [] {} (2,0);
  \end{scope}
\end{tikzpicture}
&&
\begin{tikzpicture}
\draw[opacity=0.25] (1.25,1.25) circle [radius=10mm];
\fill[opacity=0.15](1.25,1.25) circle [radius=10mm];
%\draw (0,0) rectangle (2.5,2.5);
  \begin{scope}
    \clip (1.25,1.25) circle [radius=10mm];
    \draw[ultra thick, black!55!white]   (0,2) -- node [] {} (2,0);
    \draw[ultra thick, black!55!white]   (0,1.2) -- node [] {} (2.,2.5);
    \draw[ultra thick, black!55!white]   (0,0.8) -- node [] {} (2.5,1.5);
    \draw[ultra thick, black!55!white]   (0.5,0) -- node [] {} (2.5,0.9);
    \draw[ultra thick, black!55!white]   (0.2,2.5) -- node [] {} (2.5,1.6);
  \end{scope}
\end{tikzpicture}
&&
\begin{tikzpicture}
\draw[opacity=0.25] (1.25,1.25) circle [radius=10mm];
\fill[opacity=0.15](1.25,1.25) circle [radius=10mm];
%\draw (0,0) rectangle (2.5,2.5);
\begin{scope}
\clip (1.25,1.25) circle [radius=10mm];
\draw[ultra thick, black!55!white,shorten <=0cm ,shorten >=2.1cm]   (0,2) -- node [] {} (2,0);\draw[ultra thick, black!55!white,shorten <=0.8cm ,shorten >=1.8cm]   (0,2) -- node [] {} (2,0);\draw[ultra thick, black!55!white,shorten <=1.2cm,shorten >=0.9cm]   (0,2) -- node [] {} (2,0);\draw[ultra thick, black!55!white,shorten <=2cm,shorten >=0cm]   (0,2) -- node [] {} (2,0);
\draw[ultra thick, black!55!white,shorten <=0.8cm ]   (0,1.2) -- node [] {} (2.,2.5);
\draw[ultra thick, black!55!white]   (0.5,0) -- node [] {} (2.5,0.9);
\draw[ultra thick, black!55!white,shorten <=0cm ,shorten >=2.1cm]   (0,0.8) -- node [] {} (2.5,1.5);
\draw[ultra thick, black!55!white,shorten <=2cm ,shorten >=0cm]   (0,0.8) -- node [] {} (2.5,1.5);
\draw[ultra thick, black!55!white,shorten <=0.7cm ,shorten >=0.8cm]   (0,0.8) -- node [] {} (2.5,1.5);
\draw[ultra thick, black!55!white, densely dashed]   (0.2,2.5) -- node [] {} (2.5,1.6);
\end{scope}
\end{tikzpicture}\\[4pt]
\begin{minipage}{70pt}\centering \small  single chord\\ \phantom{C}\phantom{$\Comp$}\end{minipage} &&\begin{minipage}{70pt}\centering \small finite union\\[-2pt]of chords\end{minipage} &&\begin{minipage}{70pt}\centering \small  typical\\[-2pt]element of $\Comp$\end{minipage}\\
\end{tabular}
\end{center}\vspace{4pt}
Endowing each $K\in\Comp$ with the topology induced by $d$ yields a compactology. Using that there exists a $d$-dense subset of $X$ that contains no three colinear points, the final topology of the inclusions $K\hookrightarrow X$ turns out to be strictly finer than the topology induced by $d$. For details we refer to \cite[Ex 7.1.9]{Stempfhuber}.\hfill\diam{}
\end{ex}

Assume now that a compactological set is indeed defined by taking all compacts of a given topological space $(X,\upsigma)$. It will turn out that even then the final topology $\uptau$ might not coincide with $\upsigma$, see  \cref{rem:cgwh-cmop}(2). The third natural question, i.e., under which conditions on $(X,\upsigma)$ this actually happens, leads us to the notion of a compactly generated space. Moreover, as announced earlier, we define below the `correct' separation property for our setting.

\begin{dfn}[{Strickland \cite[Dfns 1.1 and 1.2]{CGWH}}]\label{CGWH-DFN} Let $X$ be a topological space.
\begin{enumerate}
\item  $X$ is \emph{weak Hausdorff} if for any compact space $S$ and any continuous map $f\colon S\rightarrow X$ the range $f(S)\subseteq X$ is closed.
\vspace{1pt}
\item $X$ is \emph{compactly generated} if a subset $F\subseteq X$ is closed iff $f^{-1}(F)$ is closed in $S$ for any compact space $S$ and any continuous map $f\colon S\rightarrow X$. 
\vspace{1pt}
\item $X$ is a \emph{cgwh-space} if it is compactly generated and weak Hausdorff.\hfill\diam{}
\end{enumerate}
\end{dfn}

We point out that the term `compactly generated' is not used consistently throughout the literature. However, under the additional assumption of being weak Hausdorff, at least two prominent definitions turn out to be equivalent: For a weak Hausdorff space $X$ to be compactly generated it suffices that condition \ref{CGWH-DFN}(2) holds for the inclusions of compact subspaces $f\colon S\hookrightarrow X$  only, see  \cite[Lem 1.4(c)]{CGWH}. For later use we note moreover that if $X$ is weak Hausdorff, $S$ is a compact topological space and $f\colon S\rightarrow X$ is continuous, then $f(S)\subseteq X$ is compact, see \cite[Lem 1.4(b)]{CGWH}.

\begin{dfn}\label{DFN-CMP-CAT} We denote by $\Cmp$ the \textit{category of compactological sets} in which a morphism  $f\colon C \to C'$ between $C = (X, \Comp, (\uptau_K)_{K\in\Comp})$ and $C'=(X', \Comp', (\uptau_{K'})_{K'\in\Comp'})$ is a map $f\colon X\rightarrow X'$ such that $f(K)\in \Comp'$ holds and $f|_K\colon(K,\uptau_K)\rightarrow(f(K),\uptau_{f(K)})$ is continuous for all $K \in \Comp$.  We denote by $\CGWH$ the category of cgwh-spaces with continuous maps as morphisms. \hfill\diam{}
\end{dfn}

For later use, let us mention the following consequence of \cref{cmp-final}: If $(X, \Comp, (\uptau_K)_{K\in\Comp})$ and $(X', \Comp', (\uptau_{K'})_{K'\in\Comp'})$ are compactological sets then a map $f\colon X\rightarrow X'$ such that $f(\Comp)\subseteq\Comp'$ is a morphism in $\Cmp$ if and only if $f$ is continuous with respect to the final toplogies on $X$ and $X'$, respectively.

\smallskip

There are two forgetful functors $\Cmp\to\Top\to\Set$ which are both faithful. Given a compactological set $(X, \Comp, (\uptau_K)_{K\in\Comp})$, the first functor endows $X$ with the topology $\uptau$ from \cref{cmp-final} and forgets $\Comp$. Their composite admits a left adjoint $\Set\to\Cmp$ which assigns to any set the discrete topology upon it together with the compactology of finite sets.

\smallskip

We will now refine the forgetful relation between compactological sets and topological spaces to the pair of functors
\begin{equation*}
(-)_{\operatorname{top}}\colon\Cmp\rightarrow\CGWH\;\;\text{ and }\;\;\upiota\colon\CGWH\rightarrow\Cmp ,
\end{equation*}
where $(-)_{\operatorname{top}}$ is defined according to the previous paragraph and $\upiota$ endows a cgwh-space with the system of all compact subsets, each furnished with the induced topology, as a compactology.

\ThmAdjCGWHCmp

%\begin{thm}\label{CGWH-COMPL-ADJ} 
%    The two functors $\topuly-$ and $\upiota$ are well-defined and form an adjoint pair:
%    \begin{center}
%        \begin{tikzcd}[column sep=large, ampersand replacement=\&]
%            \CGWH
%            \arrow[r, "\upiota"{name=F}, bend left=17] \&
%            \Cmp.
%            \arrow[l, "(-)_{\operatorname{top}}"{name=G}, bend left=17]
%            %--- Adjunction Symbol
%            \arrow[phantom, from=F, to=G, "\dashv" rotate=-90]
%        \end{tikzcd}
%    \end{center}
%    Moreover, $\upiota$ is fully faithful, it holds that $(\upiota(-))_{\operatorname{top}} = \id_{\CGWH}$ and therefore we may read $\CGWH$ as a reflective subcategory of $\Cmp$. In particular, we may canonically understand the compact spaces $\CHaus\into\CGWH\into\Cmp$ as compactological sets endowed with the compactopology of all compact subsets, each carrying the  topology induced by $K$.
%\end{thm}

\begin{proof} \textcircled{1} Let $(X,\Comp,(\uptau_K)_{K\in\Comp})$ be a compactological set. We claim that $(X,\uptau)$ is weak Hausdorff: Let $S$ be a compact space and $f\colon S\rightarrow X$ be continuous. Then $f(S)\subseteq(X,\uptau)$ is quasi-compact and for arbitrary $K'\in\Comp$ we know that $K'\subseteq(X,\uptau)$ is compact due to \cref{cmp-final}. It follows that $f(S)\cap K'\subseteq (X,\uptau)$ is closed which establishes the claim. Next we show that $(X,\uptau)$ is compactly generated: Let $F\subseteq X$ be given and assume that $F\cap S\subseteq S$ is closed for any compact subspace $S\subseteq(X,\uptau)$. Then this holds in particular for all $S\in\Comp$, which means that $F\subseteq(X,\uptau)$ is closed. Thus, $(-)_{\operatorname{top}}$ is well-defined. That $\upiota$ is well-defined can be checked directly. 

\smallskip

\textcircled{2} To show that $(-)_{\operatorname{top}}$ is left adjoint to $\upiota$, let $X=(X,\Comp,(\uptau_K)_{K\in\Comp})$ be a compactological set and $Y=(Y,\upsigma)$ be a cgwh-space. We claim that
$$
\CGWH((X)_{\operatorname{top}},Y)\longrightarrow \Cmp(X,\upiota(Y)), \; f\mapsto f 
$$
is a bijection: As $f\colon(X)_{\operatorname{top}}\rightarrow Y$ is continuous and $Y$ is weak Hausdorff, $f$ maps every $K\in\Comp$ onto some compact subset of $Y$. The latter is by definition a member of the compactology of $\upiota(Y)$, from whence it follows that $f\colon X\rightarrow\upiota(Y)$ is a morphism of compactological sets. Conversely, if a morphism  $f\colon X\rightarrow\upiota(Y)$ of compactological sets is given, then $f$ is continuous with respect to the (final) topologies. That this bijection is natural in $X$ and $Y$ follows from all composites being determined by their behaviour on sets and the bijection acting as the identity on these underlying maps.

\smallskip

\textcircled{3} The compactology $\Comp$ of $\upiota(Y)$ consists of all compact subsets of $Y=(Y,\upsigma)$. As $Y$ is a cgwh-space, $\upsigma$ coincides with the final topology of all $(K,\upsigma|_{K})\into Y$ for $K\in\Comp$, cf.\ the remark just behind \cref{CGWH-DFN}. Thus $(\upiota(Y))_{\operatorname{top}}=Y$ holds.

\smallskip

\textcircled{4} For $Y,Y'\in\CGWH$ we use the adjunction formula from \textcircled{2} with $X=\upiota(Y)$ and \textcircled{3} to obtain
$$
\CGWH(Y,Y') = \CGWH((\upiota(Y))_{\operatorname{top}}, Y')\cong \Cmp(\upiota(Y),\upiota(Y')),
$$
establishing that $\upiota$ is fully faithful.
\end{proof}

Before investigating the categorical properties of the category $\Cmp$ in more detail, we add the following remarks.

\begin{rmk}\label{rem:cgwh-cmop}\begin{enumerate}\setlength{\itemindent}{-15pt}
        \item In general a compactological set bears more data than the associated cgwh-space. Consider, for instance, in Example \ref{EX-1}, as a second compactology $\Comp'$ the set of all compact subsets. Then $(X,\Comp',(\uptau_C)_{C\in\Comp})$ and $(X,\Comp,(\uptau_C)_{C\in\Comp})$ cannot be isomorphic as compactological spaces, but their final topologies both coincide with the metric topology.% In particular $\upiota((X,\Comp',(\tau_C)_{C\in\Comp})_{\operatorname{top}})\not=(X,\Comp',(\tau_C)_{C\in\Comp})_{\operatorname{top}}$.
        
\setlength{\itemindent}{0pt}\vspace{2pt}

\item One can consider $\upiota$ as a functor from all weak Hausdorff or even all $T_1$-spaces to compactological sets. The composition $(-)_{\operatorname{top}}\circ\upiota$ coincides then with what often is referred to as Kelley- or k-ification, see, e.g.\ \cite[p.\ 186]{MacLane}, \cite[Ex 4.3.10]{CatContext} or \cite[p.\ 690]{Schwede}. In particular, for any  weak Hausdorff space $(X,\upsigma)$ which is not compactly generated, the topology $\uptau$ as in  Proposition \ref{cmp-final} is necessarily strictly finer than $\upsigma$.

\vspace{2pt}

\item Buchwalter \cite[Chapter 3.3]{Buchwalter69} defined analogous functors on Hausdorff spaces; however, the above theorem shows that the cgwh-property is indeed sufficient for our purposes. Waelbroeck \cite[Prop III.1\hspace{1pt}--\hspace{1pt}2]{Waelbroeck} used a similar construction on $T_1$-spaces, but then $(-)_{\operatorname{top}}\circ\upiota$ does not equal the identity anymore, since $(-)_{\operatorname{top}}$ always produces a weak Hausdorff space.\hfill\diam{}
\end{enumerate}
\end{rmk}

\smallskip

Compactological sets are a way of encoding simultaneously notions of being `close' and of being `small'. The latter word makes even more sense in the Waelbroeck approach, cf.~Remarks \ref{RMK-HIST} and \ref{RMK-COMMELIN}, where arbitrary subsets of small sets are again small. While it is convenient to have access to both $\uptau$ and $\Comp$ explicitly, both structures are indeed encoded in the system $\Comp$ of `plain sets' only. The following proposition makes this idea precise.

\begin{prop}\label{DFN-COMP-4} Let $(X, \Comp, (\uptau_K)_{K\in\Comp})$ be a compactological set. Then $\Comp\subseteq\Pow(X)$ has the following properties:
    \begin{enumerate}
        \item $\Comp$ contains the empty set.
        \item $\Comp$ is a covering of $X$.
        \item $\Comp$ is stable under finite unions.
        \item $\Comp$ is stable under arbitrary intersections over non-empty index sets.
        \item For any two points $x\neq y$ in $X$ and every $K\in\Comp$ that contains both, there exist $K_1, K_2\in\Comp$ such that $K=K_1\cup K_2$ and $x\notin K_1$ and $y\notin K_2$.
        \item Every collection $(K_i)_{i\in I}\subseteq\Comp$ with empty intersection admits a finite subcollection $(K_i)_{i\in I'}$ with empty intersection.
    \end{enumerate}
    Furthermore, if some $\Comp\subseteq\Pow(X)$ satisfies (1)--(6), then there is one and only one system of topologies such that $(X,\Comp,(\uptau_K)_{K\in\Comp})$ is a compactological set. This system is given by $\uptau_K := \{K\setminus K' \mid K'\in\Comp\}$. 
\end{prop}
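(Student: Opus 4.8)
The guiding idea is that the prescribed topology $\uptau_K=\{K\setminus K'\mid K'\in\Comp\}$ has as its \emph{closed} sets precisely the traces $\{K\cap K'\mid K'\in\Comp\}$, so that each combinatorial condition (1)--(6) on $\Comp$ corresponds to exactly one topological requirement. For the forward direction I would verify (1)--(6) directly. Conditions (1)--(3) are immediate: $\emptyset\in\Comp$ follows from \ref{DFN-COMP-0}(4) applied to the empty (compact) subset of any member of the non-empty collection $\Comp$, property (2) is \ref{DFN-COMP-0}(1), and (3) is contained in \cref{FIRST-LEM}. For (4), given $(K_i)_{i\in I}$ with $I\neq\emptyset$, fix $i_0\in I$; by step \textcircled{1} of the proof of \cref{cmp-final} each $K_i\cap K_{i_0}$ is closed in $(K_{i_0},\uptau_{K_{i_0}})$, so $\bigcap_{i}K_i=\bigcap_i(K_i\cap K_{i_0})$ is a closed, hence compact, subset of $K_{i_0}$ and thus lies in $\Comp$ by \ref{DFN-COMP-0}(4). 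Property (5) is Hausdorff separation in $(K,\uptau_K)$: pick disjoint opens $U\ni x$, $V\ni y$ and set $K_1:=K\setminus U$, $K_2:=K\setminus V$, which are closed in $K$ (so in $\Comp$) and satisfy $K_1\cup K_2=K$, while $x\notin K_1$ and $y\notin K_2$. Property (6) is the finite intersection property of the compact space $(K_{i_0},\uptau_{K_{i_0}})$ after replacing the family by $(K_i\cap K_{i_0})_i$.

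For the converse I would check in turn that the formula yields a compactological structure. That $\uptau_K$ is a topology follows by De Morgan: stability of $\Comp$ under finite unions (3) gives closure of $\uptau_K$ under finite intersections, stability under arbitrary non-empty intersections (4) gives closure under arbitrary unions, and the degenerate cases $\emptyset=K\setminus K$ and $K=K\setminus\emptyset$ are covered by $K\in\Comp$ and condition (1). Hausdorffness of $(K,\uptau_K)$ is exactly property (5), and compactness is exactly property (6) read through the finite-intersection-property characterization, using that the closed sets of $\uptau_K$ are precisely the members $K\cap K'\in\Comp$. It then remains to verify \ref{DFN-COMP-0}(1)--(4): condition \ref{DFN-COMP-0}(1) is property (2); for \ref{DFN-COMP-0}(2) the union $K_1\cup K_2\in\Comp$ (property (3)) is a common upper bound; \ref{DFN-COMP-0}(3) is \emph{automatic} from the formula, since for $K_2\subseteq K_1$ one has $(K_1\setminus K')\cap K_2=K_2\setminus K'$, whence $\uptau_{K_1}|_{K_2}=\uptau_{K_2}$; and \ref{DFN-COMP-0}(4) holds because a compact subset $K_2$ of the Hausdorff space $(K_1,\uptau_{K_1})$ is closed, hence of the form $K_1\cap K'$ with $K'\in\Comp$, so $K_2\in\Comp$ by property (4).

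For uniqueness, suppose $(\uptau_K')_{K\in\Comp}$ is any system making $(X,\Comp,(\uptau_K')_{K})$ a compactological set. Applying step \textcircled{1} of the proof of \cref{cmp-final} to this system shows that $K\cap K'$ is $\uptau_K'$-closed for every $K'\in\Comp$, so $K\setminus K'=K\setminus(K\cap K')\in\uptau_K'$, giving $\uptau_K\subseteq\uptau_K'$. Since $(K,\uptau_K')$ is compact while $(K,\uptau_K)$ is Hausdorff (as just established), the identity $(K,\uptau_K')\to(K,\uptau_K)$ is a continuous bijection from a compact space onto a Hausdorff space, hence a homeomorphism; therefore $\uptau_K=\uptau_K'$, which proves that the system of topologies is uniquely determined.

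I expect the main obstacle to be organizing the converse so that each topological requirement is pinned to exactly one of the conditions (1)--(6)\,---\,in particular recognizing compactness as the finite-intersection-property encoded in (6) and Hausdorffness as the separation encoded in (5)\,---\,together with the concluding rigidity step, where uniqueness rests on the fact that a compact topology refining a Hausdorff one on the same underlying set must already coincide with it.
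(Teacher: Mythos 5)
Your proposal is correct and takes essentially the same approach as the paper's proof: the forward direction rests on \cref{FIRST-LEM} and the trace-closedness established in the first step of the proof of \cref{cmp-final}, and the converse verifies that $\uptau_K=\{K\setminus K'\mid K'\in\Comp\}$ is a compact Hausdorff topology whose closed sets are exactly the traces $K\cap K'$, from which \ref{DFN-COMP-0}(3) and \ref{DFN-COMP-0}(4) follow. The only deviation is cosmetic and concerns uniqueness: you show $\uptau_K\subseteq\uptau_K'$ and appeal to the fact that a quasi-compact topology refining a Hausdorff one must coincide with it, whereas the paper instead characterizes the closed subsets of $K$ for any admissible system intrinsically (closed iff compact iff a member of $\Comp$); the two arguments are interchangeable in substance.
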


\begin{proof} We split the proof into several parts.

\smallskip

\textcircled{1} A compactological set satisfies \ref{DFN-COMP-4}(1)\hspace{1pt}--\hspace{1pt}(6): We have $\Comp\not=\varnothing$ by  the general assumption in \cref{DFN-COMP-0}. If $X\not=\varnothing$, then \ref{DFN-COMP-0}(4) implies $\varnothing\in\Comp$. Otherwise $X=\varnothing$ and then $\Comp\not=\varnothing$ enforces $\Comp=\Pow(\varnothing)=\{\varnothing\}$. This shows \ref{DFN-COMP-4}(1); \ref{DFN-COMP-4}(2) coincides with \ref{DFN-COMP-0}(1) and \ref{DFN-COMP-4}(3) follows from \cref{FIRST-LEM}. For \ref{DFN-COMP-4}(4) let $(K_i)_{i\in I}$ be given, pick $i_0\in I$ and denote by $\uptau$ the final topology as in \cref{cmp-final}. By the latter proposition all $K_i$, and thus also their intersection $K:=\fcap_{i\in I}K_i$, are in particular closed subsets of $(X,\uptau)$. Since $\uptau_{K_{i_{\scalebox{0.5}{$0$}}}}=\uptau|_{K_{i_{\scalebox{0.5}{$0$}}}}$ holds, we get that     $K\subseteq(K_{i_0},\uptau_{K_{i_{\scalebox{0.5}{$0$}}}})$ is closed. Employing \ref{DFN-COMP-0}(4) we conclude $K\in\Comp$. Condition \ref{DFN-COMP-4}(5) is straightforward by the $K$ being Hausdorff. It remains to check \ref{DFN-COMP-4}(6): Given $(K_i)_{i\in I}\subseteq\Comp$ with empty intersection we notice that for $I=\varnothing$ the claim is trivial and thus we may pick $i_0\in I$. By arguments as used above we get that $K_i\cap K_{i_0}\subseteq(K_{i_0},\uptau_{K_{i_{\scalebox{0.5}{$0$}}}})$ is closed for any $i\in I$.  It follows that $U_i:=K_{i_0}\backslash{}K_i\subseteq(K_{i_0},\uptau_{K_{i_{\scalebox{0.5}{$0$}}}})$ is open and since $\fcup_{i\in I}U_i=K_{i_0}$ and $K_{i_{0}}$ is compact we find a finite subset $I'\subseteq I$ such that $K_{i_0}=\bigcup_{i\in I'}U_i=K_{i_0}\backslash\bigcap_{i\in I'}K_i$ holds. From the latter we conclude $K_{i_0}\cap\bigcap_{i\in I'}K_i=\varnothing$.

\smallskip

\textcircled{2} The $\uptau_K=\{K\backslash K'\}$ are compact topological spaces: Employing \ref{DFN-COMP-4}(1), \ref{DFN-COMP-4}(3) and \ref{DFN-COMP-4}(4), it is straightforward to verify that $\uptau_K$ is indeed a topology on $K$. Using \ref{DFN-COMP-4}(5) one can see that $\uptau_K$ is Hausdorff. To see that $(K,\uptau_K)$ is quasi-compact let $(U_i)_{i\in I}$ be an open cover. By definition $U_i=K\backslash K_i$ holds with suitable $K_i\in\Comp$ for $i\in I$. Then $K\cap\fcap_{i\in I}K_i=\varnothing$ and applying \ref{DFN-COMP-4}(6) to the latter system yields a finite subset $I'\subseteq I$ with $K\cap\bigcap_{i\in I'}K_i=\varnothing$. This implies $K=\bigcup_{i\in I'}U_i$.

\smallskip

\textcircled{3} For $K\in\Comp$ we have $\{F\subseteq K\:|\:F\text{ is }\uptau_K\text{-closed}\}=\{K\cap K'\:|\:K'\in\Comp\}=\{K'\in\Comp\:|\:K'\subseteq K\}$: The first equality follows from the explicit description of $\uptau_K$. The second follows from \ref{DFN-COMP-4}(4).

\smallskip

\textcircled{4} The triple $(X,\Comp,(\uptau_K)_{K\in\Comp})$ is a compactological set: By \textcircled{2} the $(K,\uptau_K)$ are compact and the first two conditions in \cref{DFN-COMP-0} are satisfied by assumption. Condition \ref{DFN-COMP-0}(3) and \ref{DFN-COMP-0}(4) follow from \textcircled{3}.

\smallskip

\textcircled{5} The $\uptau_K$ are unique: Let $(\upsigma_K)_{K\in\Comp}$ be some system of topologies such that $(X,\Comp,(\upsigma_K)_{K\in\Comp})$ is a compactological set. Fix $K\in\Comp$. A subset $F\subseteq K$ is then $\uptau_K$-closed iff it is $\uptau_K$-compact iff it belongs to $\Comp$. The first equivalence holds as $(K,\uptau_K)$ is compact and the second follows from \ref{DFN-COMP-0}(3) and \ref{DFN-COMP-0}(4).
\end{proof}

In many cases the following lemma provides a shortcut for applying Proposition \ref{DFN-COMP-4} if we seek to verify that a collection of sets $\Comp$ defines a compactology.

\begin{lem}\label{LEM-SHORTCUT} Let $X\not=\varnothing$ and assume that $\Comp\subseteq\Pow(X)$ satisfies \ref{DFN-COMP-4}(1)--\ref{DFN-COMP-4}(4). Putting $\uptau_K:=\{K\backslash K'\ |\ K'\in\Comp\}$ defines topologies on $K\in\Comp$. Letting $\uptau$ be the final topology of the inclusions $\{(K,\uptau_K)\hookrightarrow X\ |\ K\in\Comp\}$, the following holds true:
\begin{enumerate}
\item[(1)] Let $K\in\Comp$. Then $\{F\!\subseteq\!K\,|\,F\text{ is }\uptau_K\text{-closed\hspace{1pt}}\}=\{K\cap K'\,|\,K'\in\Comp\}=\{K'\!\in\!\Comp\,|\,K'\subseteq K\}$.\vspace{2pt}
\item[(2)] For every $K\in\Comp$ we have $\uptau_K = \uptau|_K$,\vspace{2pt}
\item[(3)] If $F\subseteq(X,\uptau)$ is closed and contained in some $K\in\Comp$, then we have $F\in\Comp$.\vspace{2pt}
\item[(4)] If $(K,\uptau_K)$ is compact for every $K\in\Comp$, then $\Comp$ satisfies \ref{DFN-COMP-4}(5) and \ref{DFN-COMP-4}(6).
\end{enumerate}
\end{lem}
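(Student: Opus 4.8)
The plan is to mirror parts \textcircled{2} and \textcircled{3} of the proof of \cref{DFN-COMP-4}, now under the weaker standing hypotheses \ref{DFN-COMP-4}(1)--(4). First I would record the explicit description of $\uptau_K$: a subset of $K$ is $\uptau_K$-open iff it has the form $K\setminus K'$ with $K'\in\Comp$, so the $\uptau_K$-closed subsets of $K$ are precisely the sets $K\cap K'$ with $K'\in\Comp$. That $\uptau_K$ is a topology is immediate: \ref{DFN-COMP-4}(1) gives $K=K\setminus\varnothing\in\uptau_K$, while $\varnothing=K\setminus K\in\uptau_K$ since $K\in\Comp$; \ref{DFN-COMP-4}(3) handles finite intersections via $(K\setminus K_1')\cap(K\setminus K_2')=K\setminus(K_1'\cup K_2')$; and \ref{DFN-COMP-4}(4) handles arbitrary unions via $\bigcup_{i}(K\setminus K_i')=K\setminus\bigcap_{i}K_i'$. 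The first equality in part (1) is then exactly this description of the closed sets, and the second equality $\{K\cap K'\mid K'\in\Comp\}=\{K'\in\Comp\mid K'\subseteq K\}$ follows from \ref{DFN-COMP-4}(4): each $K\cap K'$ lies in $\Comp$ and inside $K$, while conversely any $K''\in\Comp$ with $K''\subseteq K$ equals $K\cap K''$.

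For part (2) I would argue with closed sets and the defining property of the final topology, namely that $B\subseteq X$ is $\uptau$-closed iff $B\cap K'$ is $\uptau_{K'}$-closed for every $K'\in\Comp$. If $F\subseteq K$ is $\uptau|_K$-closed, say $F=B\cap K$ with $B$ being $\uptau$-closed, then $B\cap K$ is $\uptau_K$-closed directly by this criterion, so $F$ is $\uptau_K$-closed. Conversely, if $F\subseteq K$ is $\uptau_K$-closed, then $F\in\Comp$ by part (1), and I claim $F$ is already $\uptau$-closed: for every $K'\in\Comp$ the set $F\cap K'=K'\cap F$ is $\uptau_{K'}$-closed by part (1) applied to $K'$ (using $F\in\Comp$), so the final-topology criterion yields that $F$ is $\uptau$-closed, whence $F=F\cap K$ is $\uptau|_K$-closed. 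Equality of the closed sets gives $\uptau_K=\uptau|_K$.

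Parts (3) and (4) then follow quickly. For part (3): if $F\subseteq(X,\uptau)$ is closed and $F\subseteq K\in\Comp$, then $F=F\cap K$ is $\uptau|_K$-closed, hence $\uptau_K$-closed by part (2), hence $F\in\Comp$ by part (1). For part (4), assume every $(K,\uptau_K)$ is compact, i.e.\ quasi-compact and Hausdorff. Condition \ref{DFN-COMP-4}(5) follows from Hausdorffness: given $x\neq y$ in some $K\in\Comp$, choose disjoint $\uptau_K$-open sets $U\ni x$ and $V\ni y$ and put $K_1:=K\setminus U$, $K_2:=K\setminus V$; by part (1) both lie in $\Comp$, clearly $x\notin K_1$, $y\notin K_2$, and $K_1\cup K_2=K\setminus(U\cap V)=K$. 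Condition \ref{DFN-COMP-4}(6) follows from quasi-compactness exactly as in \textcircled{1} of \cref{DFN-COMP-4}: for $(K_i)_{i\in I}\subseteq\Comp$ with empty intersection the case $I=\varnothing$ is trivial, so fix $i_0\in I$; then $U_i:=K_{i_0}\setminus K_i$ is $\uptau_{K_{i_0}}$-open by part (1), $\bigcup_{i\in I}U_i=K_{i_0}$, and a finite subcover yields $I'\subseteq I$ with $K_{i_0}\cap\bigcap_{i\in I'}K_i=\varnothing$, so $(K_i)_{i\in I'\cup\{i_0\}}$ has empty intersection.

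The only genuinely delicate point is part (2): one must keep the three topologies $\uptau_K$, $\uptau|_K$ and $\uptau$ carefully apart and verify that a $\uptau_K$-closed set is closed not merely in $K$ but globally in $(X,\uptau)$. This is where \ref{DFN-COMP-4}(4)\,---\,via the description of the closed sets in part (1) applied to \emph{every} $K'\in\Comp$ simultaneously\,---\,does the real work; everything else is bookkeeping with the explicit formula for $\uptau_K$.
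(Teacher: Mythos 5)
Your proposal is correct and follows essentially the same route as the paper: the paper's own proof simply points back to parts \textcircled{2} and \textcircled{3} of the proof of \cref{DFN-COMP-4} for the topology claim and part (1), derives (2) from (1) plus the final-topology criterion, gets (3) from (1) and (2), and leaves (4) as a direct check\,---\,exactly the steps you spell out in detail (including modelling condition \ref{DFN-COMP-4}(6) on the compactness argument from \textcircled{1} of that proof). Your explicit treatment of the delicate point in (2)\,---\,that a $\uptau_K$-closed set is globally $\uptau$-closed because part (1) applies to every $K'\in\Comp$ simultaneously\,---\,is precisely the content the paper compresses into ``(1) together with \ref{DFN-COMP-4}(4) yields (2)''.
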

\begin{proof} We noted already in part \textcircled{2} of the proof of  \cref{DFN-COMP-4} that actually \ref{DFN-COMP-4}(1), \ref{DFN-COMP-4}(3) and \ref{DFN-COMP-4}(4) are enough for the $\uptau_K$ to be topologies. Moreover, we noted in \textcircled{3} of the same proof that the equalities in (1) hold and indeed we did only use \ref{DFN-COMP-4}(1)--\ref{DFN-COMP-4}(4) for this. Now, (1) together with \ref{DFN-COMP-4}(4) yields (2). Furthermore, (3) is a consequence of (1) and (2).

\smallskip

To show (4) let finally all $(K,\uptau_K)$ be compact. Using the description of the $\uptau_K$ and the fact that they are Hausdorff, it can be checked directly that \ref{DFN-COMP-4}(5) holds. Moreover, $\uptau_K$ being quasi-compact implies \ref{DFN-COMP-4}(6) in a straightforward manner.
\end{proof}

\begin{rmk}\label{DFN-COMP-RMK} In the remainder we will refer to compactological sets just as $C=(X,\Comp)$ knowing that the topologies $\uptau_K$ may at anytime be recovered via \cref{DFN-COMP-4}. Moreover, we will often use the above proposition and lemma to verify that some collection $\Comp$ actually is a compactology.\hfill\diam{}
\end{rmk}

\smallskip

In order to connect compactological sets with condensed sets in Section \ref{SEC:Equiv}, we need to deal with certain set-theoretical issues. For this we fix an uncountable strong limit cardinal $\upkappa$. Prototypically, $\upkappa$ can be taken to be of the form $\beth_\uplambda$ where $\uplambda$ is some appropriately chosen regular cardinal. In this case $\operatorname{cof}(\upkappa) = \uplambda$.

\smallskip

\begin{dfn}By a \emph{$\upkappa$-small compact space} we mean a compact space $K$ such that $|K|<\upkappa$. We say that a topological space is \emph{$\upkappa$-compactly generated}, resp.\ \emph{$\upkappa$-weak Hausdorff}, if the conditions in \cref{CGWH-DFN}(1), resp.\ \cref{CGWH-DFN}(2), hold for all $\upkappa$-small compact spaces $S$. Further, we define a \emph{$\upkappa$-cgwh space} to be a space that is both $\upkappa$-compactly generated and $\upkappa$-weak Hausdorff. We denote by $\CGWH_\upkappa$ the full subcategory of $\CGWH$ formed by the $\upkappa$-small cgwh-spaces. Finally, we define a \emph{$\upkappa$-compactological set} to be a compactological set $(X, \Comp)$ such that every set $K\in\Comp$ is $\upkappa$-small. The category $\Cmp_\upkappa$ is defined accordingly.\hfill\diam{}
\end{dfn}

\begin{rmk}\label{rem:cmp-size}
    \begin{enumerate}
        \setlength{\itemindent}{-15pt} 
        \item Notice that a $\upkappa$-compactly generated space is weak Hausdorff if and only if it is $\upkappa$-weak Hausdorff. In our definition of $\upkappa$-cgwh spaces above one thus may replace `$\upkappa$-weak Hausdorff' just by `weak Hausdorff'.
        %The weaker condition that a $\kappa$-cgwh space only needs to be $\kappa$-weak Hausdorff is inconsequential, as any $\kappa$-compactly generated space is weak Hausdorff if and only if it is $\kappa$-weak Hausdorff.\NOTE{Sven}{Besser formulieren?}

        \setlength{\itemindent}{0pt}\vspace{2pt}
        \item We will below always distinguish between the general and the restricted categories via the index $\upkappa$. 
        %Often we make no explicit reference to the cardinal $\kappa$ and understand a statement to apply both to the general and to the restricted case.
        Due to set-theoretical considerations, not every statement can be transferred from $\Cmp_\upkappa$ to $\Cmp$. Assuming the existence of a Grothendieck universe $V$ and letting a small set refer to an element of $V$, $\upkappa$ can be chosen to be $|V|$, which corresponds to the unrestricted case within stronger axiomatic foundations. Under these assumptions $\Cmp$ is just a special case of $\Cmp_\kappa$ and hence all results transfer. In particular one can work directly with $\Cmp$ without problems.
        \vspace{2pt}
        \item The inclusion $\Cmp_\upkappa\into\Cmp_{\upkappa'}$ for $\upkappa<\upkappa'$ admits a right adjoint, which forgets all elements of the compactology which are not $\upkappa$-small. There is an analogous adjunction for cgwh-spaces. Furthermore, both the category $\Cmp$ and $\CGWH$ are the totally ordered colimit over all $\upkappa$-restricted subcategories and every inclusion admits a corresponding adjunction in the same manner. In this sense, the unrestricted case can be understood to be the union of all restrictions and many results extend from all restrictions to the unrestricted category.
        \vspace{2pt}
        \item With the above in place, \cref{CGWH-COMPL-ADJ} transfers verbatim to an adjunction between $\kappa$-compactological sets and $\upkappa$-cgwh spaces.
    \end{enumerate}
\end{rmk}

Before introducing categories of condensed sets in the next section, we give an overview over limits and colimits of compactological sets, in particular investigating products, coproducts, quotients and subobjects. Corresponding results in the Waelbroeck setting, cf.\ \cref{RMK-HIST}, can be found in \cite[][Chapter 7]{Stempfhuber}.

\begin{prop}\label{prop:cmp-prod} 
    Let $I$ be a set and let $(C_i)_{i\in I}$ be a family of $\upkappa$-compactological sets $C_i=(X_i, \Comp_i)$. 
    \begin{enumerate}
        \item Denote by $\overline{(-)}^{\,\cup_{\operatorname{fin}}}$ the closure under finite unions. The coproduct $\fcoprod_{i\in I} C_i$ of $(C_i)_{i\in I}$ in both $\Cmp$ and $\Cmp_\upkappa$ exists and is given by
            \[
                 \Bigl(X := \fcoprod_{i\in I} X_i, \Comp := \overline{\mathop{\fsqcup}_{i\in I} \Comp_i}^{\,\cup_{\operatorname{fin}}}\Bigr).
            \]
            The associated topology of $\fcoprod_{i\in I} C_i$ is the usual coproduct topology.\vspace{3pt}
        \item For $\Born\subseteq\Pow(X)$ denote by $\overline{\Born}^{\,\cap\hspace{1pt}\cup_{\operatorname{fin}}}$ the collection of sets that can be written as  intersections over a non-empty collection of finite unions of elements of $\Born$. The product $\fprod_{i\in I} C_i$ of $(C_i)_{i\in I}$ in $\Cmp$ exists and is given by 
            \[
                \Bigl(X := \fprod_{i\in I} X_i, \Comp := \overline{\Bigl\{\fprod_{i\in I} K_i \Bigm| \forall\:i\in I: K_i \in \Comp_i\Bigr\}}^{\,\cap\hspace{1pt}\cup_{\operatorname{fin}}}\Bigr).
            \]
            If the index set $I$ is finite, the topology on $X$ induced by $\Comp$ is the  topology of the product computed in $\CGWH$ \cite[Dfn 2.3 and Prop 2.4]{CGWH}.\vspace{3pt}
%\blue{For bornological spaces the products $\prod_i K_i$ form a basis of the product bornology in the sense that every bounded set arises as a subset of a $\prod_i K_i$. We also don't see how we could \textbf{add} sets to the system of products and guarantee their compactness. Idea: $\Comp$ should be the set of all subsets of the $\prod_i K_i$'s which are relatively compact by Tychonoff, cf.~Hogbe-Nlend p.~30. Wouldn't it be off if the forgetful functor from compactological sets to bornological sets wouldn'd commute with products?}

\item The product of $(C_i)_{i\in I}$ for $I$ as above exists in $\Cmp_\upkappa$ as well and it is obtained by computing the product in $\Cmp$ and retaining only the $\upkappa$-small elements of the compactology. In particular, it agrees with the above whenever the involved spaces and the index set are sufficiently small in comparison to $\upkappa$. Among others this includes the case when both $|I|$ and $\sup_{K\in \bigcup_{i\in I}\Comp_i} |K|$ are strictly smaller than $\upkappa$, or, when $|I|<\operatorname{cof}(\upkappa)$.% is smaller than the cofinality of $\kappa$, as $\kappa$ was assumed to be a strong limit cardinal.
\end{enumerate}
\end{prop}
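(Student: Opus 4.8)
The plan is to dispatch the three parts in turn: in each case I first verify that the displayed collection is a genuine compactology via \cref{DFN-COMP-4} (or the shortcut \cref{LEM-SHORTCUT}), then check the relevant universal property, and only afterwards attend to the topology and the size bookkeeping.

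\textbf{Coproducts (1).} Here I would apply \cref{LEM-SHORTCUT}. The collection $\Comp=\overline{\mathop{\fsqcup}_{i\in I}\Comp_i}^{\,\cup_{\operatorname{fin}}}$ visibly contains $\varnothing$, covers $X$ (each point of the $i$-th copy lies in some $K_i\in\Comp_i$), and is closed under finite unions by construction; moreover every member is, after regrouping by index, a finite disjoint union $\bigsqcup_{i\in S}L_i$ with $S$ finite and $L_i\in\Comp_i$, hence compact Hausdorff. Thus $\Comp$ is a compactology and, by the lemma, the $\uptau_K$ are the expected subspace topologies. That the associated $\uptau$ is the coproduct topology follows from the ``closed iff closed on each member'' criterion of \cref{cmp-final}, together with the fact that each $(X_i,\uptau_i)$ already carries the final topology of its own compacts. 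For the universal property, the set-level injections are morphisms, and a compatible family $(f_i)$ induces a map sending $\bigsqcup_{i\in S}L_i$ to the finite union $\bigcup_{i\in S}f_i(L_i)$, which lies in $\Comp_D$ and is continuous on the member; uniqueness is automatic. Finally, a finite union of $\upkappa$-small sets is $\upkappa$-small (as $\upkappa>\omega$), so the object is $\upkappa$-small and is simultaneously the coproduct in $\Cmp_\upkappa$; this is consistent with the inclusion $\Cmp_\upkappa\into\Cmp$ preserving colimits, cf.\ \cref{rem:cmp-size}(3).

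\textbf{Products (2).} The base sets $\fprod_i K_i$ are compact Hausdorff by Tychonoff, carrying the topology $\prod\uptau_{K_i}$, which is the subspace topology inherited from $\fprod_i(X_i,\uptau_i)$; moreover each $\fprod_i K_i$ is \emph{closed} in that product since each $K_i$ is $\uptau_i$-closed by \cref{cmp-final}. Conditions \ref{DFN-COMP-4}(1)--(4) are then immediate: the empty set and the covering property hold (singletons $\{(x_i)\}=\fprod_i\{x_i\}$ are base elements), while closure under finite unions and arbitrary non-empty intersections is built into the idempotent operator $\overline{(-)}^{\,\cap\,\cup_{\operatorname{fin}}}$. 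Every member is therefore a closed subset of a finite union of compact products, hence quasi-compact, and is Hausdorff by coordinatewise separation (two distinct points differ in some coordinate $i_0$, where their images lie in a single $L\in\Comp_{i_0}$ and can be separated, then pulled back along the continuous $\pi_{i_0}$). For the universal property, a test object $D=(Y,\Comp_D)$ and coordinate morphisms $(f_i)$ assemble to a set map $f$; for $L\in\Comp_D$ the restriction $f|_L$ is continuous into the base element $\fprod_i f_i(L)$, so $f(L)$ is a compact subset of that member and hence lies in $\Comp$. For finite $I$, identifying $\uptau$ with the $\CGWH$-product topology reduces to checking that the products of compacts are cofinal among the compact sets detecting the $k$-ified product, so that \cite[Dfn 2.3, Prop 2.4]{CGWH} applies.

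\textbf{The $\upkappa$-restricted product (3).} By \cref{rem:cmp-size}(3) the inclusion $\Cmp_\upkappa\into\Cmp$ has a right adjoint $R$ discarding the non-$\upkappa$-small members of a compactology. Since $R$ is a right adjoint it preserves limits, and as the inclusion is fully faithful the adjunction yields $\fprod^{\Cmp_\upkappa}_i C_i\cong R\bigl(\fprod^{\Cmp}_i C_i\bigr)$; one checks that $R$ leaves the underlying set $\fprod_i X_i$ intact, because singletons are $\upkappa$-small and still cover. The ``in particular'' clauses are cardinal arithmetic: if $|I|<\operatorname{cof}(\upkappa)$ then $\{|K_i|\}_i$ is bounded by some $\mu<\upkappa$, and (this also covers the case $|I|,\sup_K|K|<\upkappa$ with $\mu=\sup_K|K|$) since $\upkappa$ is a strong limit cardinal every base product satisfies $|\fprod_i K_i|\le\mu^{|I|}<\upkappa$; finite unions and intersections preserve $\upkappa$-smallness, so the full compactology is already $\upkappa$-small and $R$ acts as the identity.

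\textbf{Main obstacle.} The genuinely technical point, and the one I expect to be hardest, is \emph{stability under compact (equivalently, closed) subsets} for the product compactology, which is what both the universal property and conditions \ref{DFN-COMP-4}(5)--(6) ultimately rest on. It amounts to the structural fact that every closed subset $F$ of a finite union of products of compacts is itself an intersection of finite unions of such products: for each $p\notin F$ one uses a basic open neighborhood $\fprod_i U_i\ni p$ disjoint from $F$ and rewrites the complement of $\fprod_i U_i$ inside each constituent product as a finite union of products (replacing one coordinate $K_j$ by the closed, hence compact, set $K_j\setminus U_j\in\Comp_j$), exhibiting a finite union of products containing $F$ but avoiding $p$; intersecting over all such $p$ recovers $F$. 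A secondary care-point is the finite-product topology comparison, where one must confirm that restricting attention to products of compacts does not coarsen the $k$-ification of the ambient product topology.
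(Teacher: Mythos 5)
Most of your outline tracks the paper's own proof. Parts (1) and (3) are essentially identical to it (same use of the right adjoint $\Cmp\to\Cmp_\upkappa$ from \cref{rem:cmp-size}(3), same cardinal arithmetic), and the ``main obstacle'' you identify in (2) is indeed the heart of the matter: your complement-of-basic-opens computation, showing that the members of $\Comp$ are exactly the subsets of finite unions of products of compacts that are closed in the plain product topology, is correct and is a reformulation of the paper's key step (there phrased as: for $Q=\fprod_i K_i$ the abstract topology $\uptau_Q$ coincides with the Tychonoff topology, after which \cref{LEM-SHORTCUT} and Tychonoff's theorem finish conditions \ref{DFN-COMP-4}(5)--(6) and the universal property). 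Two routine omissions: in (1) you invoke \cref{LEM-SHORTCUT} without ever checking \ref{DFN-COMP-4}(4) (closure under arbitrary non-empty intersections), which is a hypothesis of that lemma, and without checking that your ``regrouped'' compact topology on a member agrees with the abstract $\uptau_K$; in (2) you verify the mediating map but never that the projections $p_j$ are themselves morphisms of $\Cmp$.

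The genuine gap is the finite-index topology comparison in (2). You propose to identify the topology induced by $\Comp$ with the $\CGWH$-product by ``checking that the products of compacts are cofinal among the compact sets detecting the $k$-ification of the ambient product topology.'' That cofinality statement is false in general: take $X_1=X_2=\RR$ with the compactology of \cref{EX-1} (countable compact subsets), so that $\topuly{X_i}$ is $\RR$ with its metric topology. Then $[0,1]^2$ is a compact subset of the plain product $\RR^2$, but it is contained in no set $K_1\times K_2$ with $K_1,K_2$ countable, so products of compactology members are not cofinal among compacts of the ambient product. (The conclusion of the proposition still holds in this example, but for the unrelated reason that sequences detect closedness in metric spaces; so your reduction proves strictly less than what is needed.) If instead one reads ``cofinal \ldots detecting'' as ``the products of compacts already detect the $k$-ification,'' that is a restatement of the claim, not a reduction of it. What the paper actually uses here is a nontrivial external input: $\fcoprod_{K_i\in\Comp_i}K_i\onto\topuly{X_i}$ is a quotient map by definition of the final topology, and finite products of quotient maps between compactly generated spaces are again quotient maps \cite[Prop 2.20]{CGWH}; hence $\fcoprod_{(K_1,K_2)}K_1\times K_2\onto\topuly{X_1}\times\topuly{X_2}$ (product in $\CGWH$) is a quotient map, which says precisely that the $\CGWH$-product topology is final with respect to the products of compacts. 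Without this theorem (or an equivalent substitute), the direction ``the $\Comp$-final topology is no finer than the $\CGWH$-product topology'' remains unproven in your outline.
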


\begin{proof}(1) We first check that $(X,\Comp)$ defines a compactological set via Proposition \ref{DFN-COMP-4}. Indeed, the conditions \ref{DFN-COMP-4}(1)--\ref{DFN-COMP-4}(3) are straightforward. To check \ref{DFN-COMP-4}(4) let $(K^{\scriptscriptstyle(j)})_{j\in J}$ be given with $j_0\in J$. Then
$$
\fcap_{j\in J}K^{\scriptscriptstyle(j)}=\bigl(K_{1}^{\scriptscriptstyle(j_{\scalebox{0.5}{$0$}})}\times\{i_{1}^{\scriptscriptstyle(j_{\scalebox{0.5}{$0$}})}\}\cup\cdots\cup K_{n_{j_{\scalebox{0.5}{$0$}}}}^{\scriptscriptstyle(j_{\scalebox{0.5}{$0$}})}\times\{i_{n_{j_{\scalebox{0.5}{$0$}}}}^{\scriptscriptstyle(j_{\scalebox{0.5}{$0$}})}\}\bigr)\cap\fcap_{j\in J\backslash\{j_0\}}\hspace{-8pt}K^{\scriptscriptstyle(j)}
$$
from where we see that the intersection consists of a union of at most $n_{j_0}$-many sets, each of which is an intersection in some $\Comp_i$. Since we established \ref{DFN-COMP-4}(3) already, we may apply the latter to conclude that \ref{DFN-COMP-4}(4) holds. Next we use Lemma \ref{LEM-SHORTCUT}(4) to verify \ref{DFN-COMP-4}(5) and \ref{DFN-COMP-4}(6). For this it is enough to observe that any element
$$
K=\fcup_{j=1}^nK_{j}\times\{i_{j}\}\in\Comp,
$$
where w.l.o.g.\ $i_{j_1}\not=i_{j_2}$ for $j_1\not=j_2$ may be assumed, can be endowed with a compact topology coming from the $K_j\in\Comp_{i_j}$. A direct calculation shows that the latter coincides with the topology $\uptau_K$ as in Lemma \ref{LEM-SHORTCUT}. Thus $(X,\Comp)$ is a compactological set.
%
%For \ref{DFN-COMP-4}(5) let distinct points
%$$
%(x,\mu),(y,\nu)\in \fcup_{j=1}^nK_{j}\times\{i_{j}\}\in\Comp
%$$
%be given. If $\mu=\nu$, then the desired decomposition can be found by using that \ref{DFN-COMP-4}(5) holds for $\Comp_\mu$. If $\mu\not=\nu$, then put
%$$
%K^{(1)}:=\fcup_{\stackrel{\scriptstyle j=1}{i_j\not=\mu}}^nK_{j}\times\{i_{j}\} \;\text{ and }\;K^{(2)}:=\fcup_{\stackrel{\scriptstyle j=1}{i_j\not=\nu}}^nK_{j}\times\{i_{j}\}.
%$$
%For \ref{DFN-COMP-4}(6) let $(K^{\scriptscriptstyle(j)})_{j\in J}$ with $j_0\in J$ have empty intersection. As above we see that this intersection is in fact a finite union of intersections in certain $\Comp_i$'s. But now, by assumption each of these intersections is empty and we may use that every $\Comp_i$ enjoys \ref{DFN-COMP-4}(6). 
%
On $X$ we can now consider on the one hand the topology induced by $\Comp$ in the sense of \cref{DFN-COMP-4} and on the other hand we can use \cref{DFN-COMP-4} to get a topology on every $X_i$ first and then endow $X$ with the corresponding coproduct-topology. Using that $K_1\times\{i_1\}$ and $K_2\times\{i_2\}$ are either in the same $\Comp_i$ or disjoint, it follows however that both topologies coincide. From this we see that the canonical maps $X_i\rightarrow X$ are continuous. As they, by construction, map elements of $\Comp_i$ onto elements of $\Comp$, they are morphisms in $\Cmp$. Let finally $f_i\colon (X_i,\Comp_i)\rightarrow(Y,\Eomp)$ be morphisms into some other compactological set. Regarded only as topological spaces, $X$ is the coproduct of the $X_i$, and we thus get a unique continuous map
$$
f\colon\fcoprod_{i\in I}X_i\rightarrow Y,
$$
making the coproduct diagram commutative. Using that $\Eomp$ is closed under finite unions we see that $f(\Comp)\subseteq\Eomp$ holds.

\medskip

(2) In the steps \textcircled{1}\hspace{1pt}--\hspace{1pt}\textcircled{3} below we will show that $\Comp$ is a compactology. In \textcircled{4} we will then establish that $(X,\Comp)$ satisfies the universal property of a product in $\Cmp$ and in \textcircled{5} we will show the last statement of (2) on finite index sets. Throughout we will use the abbreviation $\Qomp=\bigl\{\prod_{i\in I}K_i\:|\:\forall\:i\in I\colon K_i\in\Comp_i\bigr\}$ for the `generator' of $\Comp$.

\smallskip

\textcircled{1} We observe that $\Comp=\overline{\Qomp}^{\,\cap\hspace{1pt}\cup_{\operatorname{fin}}}$ is by definition closed under arbitrary intersections over non-empty index sets. By passing to complements, one can verify that $\Comp$ is closed under finite unions. Moreover, we get $\varnothing\in\Comp$ as a union over the empty index set and see that $\Comp$ covers $X$ by using that each $\Comp_i$ covers $X_i$. Let  $\uptau_K$ for $K\in\Comp$ and $\uptau$ be as in \cref{LEM-SHORTCUT} and put
$$
\Domp:=\bigl\{F\subseteq X\:|\:F\subseteq(X,\uptau)\text{ is closed and }\exists\:Q\in\Qomp\colon F\subseteq Q\bigr\}.
$$
Direct calculation shows that (a) $\Domp$ is closed under finite unions as well as under intersection over arbitrary non-empty index sets. Since $\Qomp\subseteq\Comp$, \cref{LEM-SHORTCUT}(1) implies that (b) $\Qomp\subseteq\Domp$ holds. As $\Comp$ is however by construction the smallest subset of $\Pow(X)$ satisfying (a) and (b), we infer $\Comp\subseteq\Domp$.

\smallskip

\textcircled{2} We fix $Q=\prod_{i\in I}K_i\in\Qomp$, denote by $\upsigma$ the product topology with respect to the topologies $\uptau_{K_i}=\uptau_i|_{K_i}$ and claim that $\upsigma=\uptau|_Q=\uptau_Q$ holds. Notice that the second equality holds by \cref{LEM-SHORTCUT}(2) and only the first has to be established here. To achieve this, observe first that $A\subseteq(Q,\upsigma)$ is closed iff
$$
\exists\:A_{jk}^{\scriptscriptstyle(i)}\subseteq K_i\text{ closed with } A_{jk}^{\scriptscriptstyle(i)}\not=K_i \text{ for at most one }i\in I\text{ s.th.: } A=\fcap_{j\in J}\fcup_{k=1}^{n_j}\fprod_{i\in I}A_{jk}^{\scriptscriptstyle(i)}
$$
holds. Next, observe
\begin{eqnarray*}
\bigl\{F\:|\:F\subseteq(Q,\uptau_Q)\text{ closed}\bigr\} & = & \bigl\{Q\cap K\:|\:K\in\Comp\bigr\}\\[4pt]
& = & \overline{\bigl\{Q\cap Q'\:|\:Q'\in\Qomp\bigr\}}^{\,\cap\hspace{1pt}\cup_{\operatorname{fin}}}\\[2pt]
& = & \overline{\Bigl\{\fprod_{i\in I}A_{jk}^{\scriptscriptstyle(i)}\:\big|\:\forall\:i\in I\colon A_{jk}^{\scriptscriptstyle(i)}\subseteq (K_i,\uptau_{K_i})\text{ closed}}\\[-5pt]
&  & \phantom{xx\hspace{25pt}xx} \overline{\text{and }A_{jk}^{\scriptscriptstyle(i)}\not= K_i \text{ for at most one }i\in I\Bigr\}}^{\,\cap\hspace{1pt}\cup_{\operatorname{fin}}}.
\end{eqnarray*}
Indeed, the first equality follows as $Q\in\Comp$. For the second equality `$\subseteq$' follows from the definition of $\Comp$, while `$\supseteq$' holds as all $Q\cap Q'$, and then even all members of the relevant set, are closed in $(Q,\uptau_Q)$. Since on the one hand all $Q\cap Q'$ belong to $\Qomp$ and on the other hand all $\prod_{i\in I}A_{jk}^{\scriptscriptstyle(i)}$ belong to $\Qomp$ we get also the last equality.

\smallskip

\textcircled{3} Let $K\in\Comp$ be given. By \textcircled{1} there exists $Q\in\Qomp$ with $K\subseteq Q$. \cref{LEM-SHORTCUT}(1) implies that $K\subseteq(Q,\uptau_Q)$ is closed. Using \textcircled{2} we see that $(Q,\uptau_Q)=(Q,\upsigma)$ and the latter is compact by Tychonoff's theorem. Thus, $(K,\uptau_Q|_K)=(K,\uptau|_K)=(K,\uptau_K)$ is compact and it follows from \cref{LEM-SHORTCUT}(4) that now all six conditions in \cref{DFN-COMP-4} hold.

\smallskip

Before we continue with the proof, let us remark that it follows from \cref{LEM-SHORTCUT}(3), now that we know that $\Comp$ is a compactology, that $\Comp=\Domp$ holds.

\smallskip

\textcircled{4} We first verify that for $j\in I$ the projection
$$
p_j\colon\fprod_{i\in I}X_i\rightarrow X_j
$$
is a morphism in $\Cmp$. In order to see that $p_j$ is continuous, let $K\in\Comp$ be arbitrary and consider $p_j|_K$. Then $K\subseteq Q:=\prod_{i\in I}K_i$ with $K_i\in\Comp_i$. By \textcircled{3} we know that $(K,\uptau_K)\subseteq(Q,\uptau|_Q)\in\Qomp$ holds with certain $K_i\in\Comp_i$ where $\uptau$ coincides with the product topology on $Q$. But then the projection $(Q,\uptau|_Q)\rightarrow(K_j,\uptau_{K_j})$ is continuous and the latter is a topological subspace of $(X_j,\uptau_j)$. In order to check $p_j(\Comp)\subseteq\Comp_j$ we take $K$ and choose $Q$ as above. Then $p_j(K)\subseteq (K_j,\uptau_{K_j})$ is compact and thus $p_j(K)\in\Comp_j$.

\smallskip

Let now $f_i\colon(Y,\Eomp)\rightarrow(X,\Comp)$ be morphisms from some other compactological set. Regarded only as sets, we get a unique map
$$
f\colon Y\rightarrow\fprod_{i\in I}X_i
$$
which, for given $K\in\Comp$, satisfies $f(K)\subseteq Q:=\prod_{i\in I}K_i\in\Comp$ with certain $K_i\in\Comp_i$. By using similar arguments as above it is enough to show that $f\colon(K,\uptau_K)\rightarrow(Q,\uptau|_Q)$ is continuous. As $\uptau|_Q$ coincides with the product topology on $Q$, this, however, follows by using the latter's universal property.

\medskip  

\textcircled{5} Consider now $I=\{1,2\}$. We claim that the topology of the product $\topuly{X_1,\Comp_1}\times\topuly{X_2,\Comp_2}$ in $\CGWH$ is final with respect to the elements $K\in\Comp$ of the compactology on $X=X_1\times X_2$. As any $K\in\Comp$ is a subspace of some product $K_1\times K_2$ for $K_1\in\Comp_1$ and $K_2\in\Comp_2$ this is equivalent to finality with respect to these.
To see that this holds, observe that any space $Y$ has the final topology with respect to a jointly surjective family $(Z\to Y)_{Z\in\mathscr Z}$ of maps if and only if $\coprod_{Z\in\mathscr Z} Z \to Y$ is a quotient map. We show this for
\[
    \fcoprod_{(K_1, K_2)\in\Comp_1\times\Comp_2} K_1\times K_2 \onto \topuly{X_1, \Comp_1}\times \topuly{X_2, \Comp_2}.
\]
It is immediate that this map corresponds to the categorical product of
\[
    \fcoprod_{K_1\in\Comp_1} K_1 \onto \topuly{X_1, \Comp_1} \;\;\;\text{ and }\;
    \fcoprod_{K_2\in\Comp_2} K_2 \onto \topuly{X_2, \Comp_2}.
\]
Both of these are quotient maps by definition and finite product of quotient maps between compactly generated spaces are quotient maps according to \cite[Prop 2.20]{CGWH}.

\medskip

(3) By \cref{rem:cmp-size}(3) the inclusion $\upiota\colon\Cmp_\upkappa\into\Cmp$ admits a right adjoint $\uppi\colon\Cmp\to\Cmp_\upkappa$, which necessarily preserves limits. Furthermore $\Cmp_\upkappa\into\Cmp\to\Cmp_\kappa$ is the identity. Thus for any diagram $F\colon \catI\to\Cmp_\upkappa$, $\lim F$ is given by $\uppi(\lim \upiota(F))$, as claimed. 

\smallskip

If $|I|$ and $\sup_{K\in \bigcup_{i\in I}\Comp_i} |K|$ are strictly smaller than $\upkappa$ it follows from cardinal arithmetic that
$$
\sup_{K\in\Comp} |K|\hspace{3pt} =\hspace{-3pt} \sup_{\scalebox{0.6}{$\displaystyle(K_i)\hspace{-1pt}\in\hspace{-1pt}\fprod_{i\in I} \Comp_i$}}\hspace{3pt}\fprod_{i\in I} |K_i| \hspace{3pt}\leqslant\hspace{2pt} \Bigl(\hspace{1pt}\sup_{\scalebox{0.6}{$\displaystyle K\hspace{-1pt}\in\hspace{-1pt}\fcup_{i\in I} \Comp_i$}} |K|\Bigr)^{|I|}<\hspace{3pt}2^{|I|\hspace{1.5pt}\cdot\hspace{1.5pt}\sup_{K\in \bigcup_{i\in I} \Comp_i} |K|}\hspace{3pt}=\hspace{3pt}2^\uplambda
$$
for some $\uplambda<\upkappa$. That $\upkappa$ is a strong limit cardinal then implies that the elements of the product compactology are again bounded by $\upkappa$ and hence the products in $\Cmp$ and $\Cmp_\upkappa$ agree. The case of $|I|<\operatorname{cof}(\upkappa)$ follows by a similar argument, i.e., bounding $\bigl|\prod_{i\in I} K_i\bigr|$ by $\bigl(\sup_{i\in I} |K_i|\bigr)^{|I|}$, showing that every individual element of $\Comp$ will be bounded by $\upkappa$ even when the supremum $\sup_{K\in\Comp} |K|$ might not be.
\end{proof}

\begin{prop}\label{prop:cmp-equ} 
    Let $f, g\colon C\to D\in\Cmp_\kappa$ be any two morphisms of $\upkappa$-compactological sets where $C=(X_C, \Comp_C)$ and $D=(X_D, \Comp_D)$.\vspace{3pt}
    \begin{enumerate}
        \item The equalizer $\operatorname{Eq}(f, g)$ of $f$ and $g$ in $\Cmp_\kappa$ and $\Cmp$ exists and is given by
            \[
                \phantom{XXX}\bigl(X := \bigl\{x\in X_C\mid f(x) = g(x)\bigr\},\ \Comp := X\cap\Comp_C = \bigl\{K\in\Comp_C\mid K\subseteq X\bigr\}\bigr).
            \]
            The associated cgwh-space $\topuly{\operatorname{Eq}(f, g)}$ agrees with the equalizer of $\topuly f$ and $\topuly g$ in $\CGWH$.\vspace{3pt}
        \item The coequalizer $\operatorname{Coeq}(f, g)$ of $f$ and $g$ in $\Cmp_\upkappa$ and $\Cmp$ exists and is given by
            \[
            \phantom{XXX}\bigl(X := X_D/E,\ \Comp := \uppi(\Comp_D)\bigr),
                %\phantom{XXX}\bigl(X := X_D \Big/\ \overline{\bigl\{(f(x), g(x)) \mid x\in X_C\bigr\}},\ \Comp := \uppi(\Comp_D)\bigr)
            \]
            where $E\subseteq X_D\times X_D$ is the smallest closed equivalence relation such that $E$ contains all pairs $(f(x),g(x))$ for $x\in X_C$, where the product carries the topology induced by the product compactology, cf.\ \cref{prop:cmp-prod}(2).
            %where $\overline{M}$ denotes the closure of a set $M\subseteq X_D\times X_D$ to the smallest equivalence relation containing $M$ that is closed as a subset of $D\times D$. 
            The map $\uppi\colon X_D\to X$ is the canonical projection. The associated cgwh-space $\topuly{\operatorname{Coeq}(f, g)}$ agrees with the coequalizer of $\topuly f$ and $\topuly g$ in $\CGWH$.
    \end{enumerate}
\end{prop}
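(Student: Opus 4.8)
The plan is to handle the two parts separately, in each case first establishing that the proposed triple is a compactological set (via \cref{DFN-COMP-4} and \cref{LEM-SHORTCUT}), then verifying the universal property, and finally the comparison with $\CGWH$. For the equalizer this is essentially the subobject construction: for $K\in\Comp_C$ the maps $f|_K,g|_K$ send $K$ continuously into a common compact (hence Hausdorff) $K'\in\Comp_D$ obtained from \ref{DFN-COMP-0}(2), so $K\cap X=\{x\in K\mid f(x)=g(x)\}$ is $\uptau_K$-closed and therefore lies in $\Comp_C$ by \ref{DFN-COMP-0}(4). Thus $\Comp=\{K\cap X\mid K\in\Comp_C\}$ is the subspace compactology, and \ref{DFN-COMP-4}(1)--(6) are inherited from $\Comp_C$. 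The inclusion $e\colon(X,\Comp)\to C$ satisfies $fe=ge$, and any $h\colon(Y,\Eomp)\to C$ with $fh=gh$ has range inside $X$ and hence factors through a unique set map $\bar h$, which is a morphism because $\bar h(L)=h(L)\in\Comp$ and $\bar h|_L$ is continuous. For the comparison I would note that $X$ is closed in $\topuly C$ (its intersection with each $K\in\Comp_C$ is $\uptau_K$-closed, by definition of the final topology), whence it is the $\CGWH$-equalizer with the subspace topology, which a direct check shows coincides with the final topology of $\Comp$.

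For the coequalizer the substance lies in showing that $(X,\Comp)=(X_D/E,\uppi(\Comp_D))$ is a compactological set, each $\uppi(K)$ carrying the quotient topology of $\uppi|_K$. First, $E$ exists as the intersection of the family of closed equivalence relations containing $R:=\{(f(x),g(x))\mid x\in X_C\}$; this family is nonempty since $X_D\times X_D$ qualifies, and intersections of closed equivalence relations are again closed equivalence relations. Conditions \ref{DFN-COMP-4}(1)--(3) are immediate from $\uppi(\varnothing)=\varnothing$, surjectivity of $\uppi$, and $\uppi(K_1)\cup\uppi(K_2)=\uppi(K_1\cup K_2)$. The first real obstacle is \ref{DFN-COMP-4}(4): for $(\uppi(K_i))_{i\in I}$ with $i_0\in I$, the set $\{a\in K_{i_0}\mid\uppi(a)\in\uppi(K_i)\}=\operatorname{pr}_1((K_{i_0}\times K_i)\cap E)$ is compact, because $(K_{i_0}\times K_i)\cap E$ is closed in the compact space $K_{i_0}\times K_i$ — here \cref{prop:cmp-prod}(2) ensures the product compactology restricts to the product topology there, so closedness of $E$ applies — and the projection of a compact set is compact. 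Intersecting over $i$ yields a closed, hence (by \ref{DFN-COMP-0}(4) for $\Comp_D$) a set $B\in\Comp_D$ with $\uppi(B)=\fcap_{i\in I}\uppi(K_i)$, proving \ref{DFN-COMP-4}(4).

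The second obstacle is that every $\uppi(K)$ must be compact Hausdorff. Compactness is automatic as a continuous image; Hausdorffness holds because $E\cap(K\times K)$ is a closed equivalence relation on the compact Hausdorff space $K$, and the quotient of a compact Hausdorff space by a closed equivalence relation is again compact Hausdorff — this is precisely where closedness of $E$ is indispensable. With \ref{DFN-COMP-4}(1)--(4) verified and all $\uptau_K$ compact, \cref{LEM-SHORTCUT}(4) supplies \ref{DFN-COMP-4}(5)--(6), so $(X,\Comp)$ is a compactological set whose canonical topology on $\uppi(K)$ is the quotient topology.

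Finally, $\uppi\colon D\to(X,\Comp)$ is a morphism with $\uppi f=\uppi g$, since $R\subseteq E$. Given $h\colon D\to(Z,\Eomp)$ with $hf=hg$, I would verify that $\ker(h)=\{(a,b)\mid h(a)=h(b)\}$ is a closed equivalence relation containing $R$: on each $K\times K$ it equals $(h|_K\times h|_K)^{-1}(\Delta_{L})$ with $L:=h(K)\in\Eomp$ Hausdorff, hence closed, so $\ker(h)$ is closed in the product-compactology topology. Minimality of $E$ forces $E\subseteq\ker(h)$, so $h$ factors through $\uppi$ via a unique set map $\bar h$, which is a morphism by the universal property of the quotient topology on each $\uppi(K)$. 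The comparison $\topuly{\operatorname{Coeq}(f,g)}=\operatorname{Coeq}(\topuly f,\topuly g)$ in $\CGWH$ is then immediate, since $\topuly-$ is a left adjoint by \cref{CGWH-COMPL-ADJ} and hence preserves colimits. I expect the main obstacle throughout to be the interplay with closedness of $E$: it is exactly what guarantees both \ref{DFN-COMP-4}(4) and the Hausdorffness of the quotient pieces, while dually its minimality is what drives the factorization in the universal property.
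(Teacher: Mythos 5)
Your proposal is correct, but it takes a genuinely different route from the paper's, most substantially for the coequalizer. For the equalizer, the paper obtains closedness of $X$ in $\topuly C$ from weak Hausdorffness of the compactly generated product (closedness of the diagonal, citing Strickland), whereas you argue on each compact piece that the equalizer of two continuous maps into a common Hausdorff $K'\in\Comp_D$ is closed; this is more elementary and avoids products altogether. For the coequalizer, the paper works with the ambient cgwh quotient topology $\uptau$ on $X_D/E$ (quoting Strickland's results that quotients of cgwh spaces by closed equivalence relations are cgwh and are the coequalizers in $\CGWH$), establishes the characterization of $\Comp$ as the $\uptau$-closed subsets of the sets $\uppi(K')$, and derives \ref{DFN-COMP-4}(1)--(6) from that, leaving the universal property in $\Cmp$ implicit (``straightforward''). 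You instead stay entirely at the level of compact pieces: \ref{DFN-COMP-4}(4) via the projection $\operatorname{pr}_1\bigl((K_{i_0}\times K_i)\cap E\bigr)$, compact Hausdorffness of the pieces $\uppi(K)$ via the classical fact that a quotient of a compact Hausdorff space by a closed equivalence relation is compact Hausdorff, an explicit factorization through minimality of $E$ (using that $\ker(h)$ is a closed equivalence relation), and the $\CGWH$ comparison for free from left-adjointness of $\topuly-$ (\cref{CGWH-COMPL-ADJ}), rather than the paper's direct finality argument. Your version is more self-contained and actually records the universal property, which the paper's proof glosses over; the paper's version exhibits the ambient topology and the closed-subset description of $\Comp$ directly, which it reuses later (cf.\ \cref{prop:chaus-incl-cocont}). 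One small point: closedness of $\ker(h)$ must be tested against all elements of the product compactology, i.e.\ closed subsets of cylinders $K_1\times K_2$; your ``$K\times K$'' case suffices after enlarging via \cref{FIRST-LEM}, but say so.

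One step you should make explicit, since as written it is slightly circular: \cref{LEM-SHORTCUT}(4) requires compactness of the topologies $\uptau_{\uppi(K)}$ determined by the collection $\Comp$ itself (whose closed sets are, by \cref{LEM-SHORTCUT}(1), exactly the elements of $\Comp$ contained in $\uppi(K)$), whereas your compact Hausdorff argument concerns the quotient topology of $\uppi|_K$. These topologies do coincide, but that needs its own short proof, and you only assert the identification as a consequence. Both directions are available from ingredients you already have: if $F\subseteq\uppi(K)$ is quotient-closed, then $(\uppi|_K)^{-1}(F)$ is closed in $K$, hence lies in $\Comp_D$, and $F=\uppi\bigl((\uppi|_K)^{-1}(F)\bigr)\in\Comp$; conversely, an element $\uppi(K')\in\Comp$ contained in $\uppi(K)$ satisfies $(\uppi|_K)^{-1}(\uppi(K'))=\operatorname{pr}_1\bigl((K\times K')\cap E\bigr)$, which is compact and hence closed in $K$ --- your projection argument again. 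Inserting this identification between your verification of \ref{DFN-COMP-4}(1)--(4) and the invocation of \cref{LEM-SHORTCUT}(4) completes the argument.
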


\begin{proof} (1) According to \cref{CGWH-COMPL-ADJ} the topologies $\uptau_C$ and $\uptau_D$ given by $\Comp_C$ and $\Comp_D$ are cgwh. Using \cite[Prop 2.14]{CGWH} we obtain that the diagonal $X\cong \Delta_C \into X_C\times X_C$ is closed as a subspace of the compactly generated product and hence by continuity $X = (f, g)^{-1}(\Delta_C)$ is a closed subset of $(X_C,\uptau_C)$. We claim that, in general, for any closed subset $X\subseteq(X_C,\uptau_C)$, the \emph{subset compactology}
$$
\bigl(X,\ \Comp:=\bigl\{K\in\Comp_C\ | \ K\subseteq X\bigr\}=\bigl\{K\cap X\ | \ K\in\Comp_C\bigr\}\bigr)
$$
is a compactology. Indeed, \ref{DFN-COMP-4}(1)--\ref{DFN-COMP-4}(4) are satisfied. As
$$
\{K\cap K'\ | \ K'\in\Comp\}=\{K\cap K'\ | \ K'\in\Comp_C\}
$$
holds for $K\in\Comp$ the topologies $\uptau_K$ induced by $\Comp$ resp.~$\Comp_C$ coincide and we may apply \cref{LEM-SHORTCUT}(4) to get \ref{DFN-COMP-4}(5) and \ref{DFN-COMP-4}(6). Moreover, using the consistency of the topologies $\uptau_K$ it follows that the topology $\uptau$ induced by $\Comp$ on $X$ equals the restriction of $\uptau_C$ to $X$; in particular, the inclusion $\operatorname{eq}\colon X\hookrightarrow X_C$ is continuous and by definition $\operatorname{eq}(\Comp)\subseteq\Comp_C$. From here it is straightforward to check that $\operatorname{eq}$ satisfies the universal properties in $\Cmp$ and in $\CGWH$.

\medskip

(2) We first notice that $E$ can explicitly be described as the intersection over all equivalence relations that are closed and contain $\{(f(x),g(x))\:|\:x\in X_C\}$. By \cite[Prop 2.1 and Cor 2.21]{CGWH} the quotient topology on $X_D/E$ is cgwh. We write $\uptau$ for this topology in the remainder and note that by the above $\uppi(K')\subseteq(X,\uptau)$ is closed for every $K'\in\Comp_D$. We claim
$$
(\star)\qquad 
\uppi(\Comp_D) =: \Comp = \bigl\{K\subseteq X\:\big|\:\exists\:K'\in\Comp_D\colon K\subseteq\uppi(K') \text{ is }\uptau\text{-closed}\bigr\},\qquad\phantom{(\star)} 
$$
where `$\subseteq$' follows from what we noted above. For `$\supseteq$' let $K$ and $K'$ be given and consider the map $\varphi\colon(K',\uptau_{K'})\rightarrow(\uppi(K'),\uptau|_{\uppi(K')})$. Then $K'':=\varphi^{-1}(K)\subseteq(K',\uptau_{K'})$ is closed as $\varphi$ is continuous and thus $K''\in\Comp_D$ holds. Since $\varphi$ is surjective, we have $K=\uppi(K'')\in\Comp$.

\smallskip

Straightforward arguments show that $\Comp$ satisfies \ref{DFN-COMP-4}(1)--\ref{DFN-COMP-4}(3) and using the characterization of $\Comp$ that we just established, \ref{DFN-COMP-4}(4) can be checked as well. Using $(\star)$ we see that, for $K\in\Comp$, $\Comp\cap K$ consists precisely of the $\uptau|_K$-closed sets. Referencing \cref{LEM-SHORTCUT}(1), this implies that the $\uptau_K$-closed sets are exactly the $\uptau|_K$-closed ones for all $K\in\Comp$. From this, and since $(K, \uptau|_K)$ is compact, we obtain \ref{DFN-COMP-4}(5) and (6).
Now it is straightforward to check that $\uptau_D$ being final with respect to the subspaces $(K, \uptau_D|_K)$ for $K\in\Comp_D$ implies that $\uptau$ is final with respect to the subspaces $(\uppi(K), \uptau|_{\uppi(K)})$, by virtue of $\uppi$ being a quotient map. It follows that $\topuly{X, \Comp} = (X, \uptau)$ is the coequalizer in $\CGWH$.

\end{proof}

\begin{cor}\label{cor:cmp-bicomp}
    The category of compactological sets is complete and cocomplete.
\end{cor}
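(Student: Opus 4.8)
The plan is to deduce the corollary directly from the classical reduction of (co)completeness to (co)products and (co)equalizers, since all four of these constructions have by now been carried out. Concretely, I would recall the standard fact (see \cite[\S V.2]{MacLane} or \cite{CatContext}) that a category admits all small limits precisely when it admits all small products together with all equalizers of parallel pairs, and dually that it admits all small colimits precisely when it admits all small coproducts together with all coequalizers.

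First I would invoke \cref{prop:cmp-prod}, which produces the product and the coproduct of an arbitrary set-indexed family in $\Cmp$, together with \cref{prop:cmp-equ}, which produces the equalizer and the coequalizer of any parallel pair. Feeding these into the criterion yields completeness and cocompleteness of $\Cmp$ at once: for a small diagram $F\colon\catJ\to\Cmp$ one realizes $\lim F$ as the equalizer of the two canonical maps $\fprod_{j}F(j)\rightrightarrows\fprod_{(\alpha\colon j\to j')}F(j')$, and dually realizes $\colim F$ as a coequalizer of coproducts; all constituents exist by the cited propositions, so this is purely formal once the building blocks are in hand.

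The one point that genuinely deserves attention is the size-restricted category $\Cmp_\upkappa$, because there an arbitrary product of $\upkappa$-small compactological sets need not remain $\upkappa$-small. This is exactly what \cref{prop:cmp-prod}(3) addresses: the product in $\Cmp_\upkappa$ is computed by first forming it in $\Cmp$ and then applying the right adjoint $\uppi$ that forgets the non-$\upkappa$-small members of the compactology. As a right adjoint, $\uppi$ preserves all limits, so the same product-and-equalizer recipe yields every small limit in $\Cmp_\upkappa$; equalizers there are subobjects and hence automatically $\upkappa$-small, requiring no truncation, and the colimit case is dual. Thus no real obstacle remains: the substantive work lies entirely in \cref{prop:cmp-prod,prop:cmp-equ}, and the corollary is their formal consequence.
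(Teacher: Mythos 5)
Your proposal is correct and follows exactly the paper's argument: completeness and cocompleteness are deduced from \cref{prop:cmp-prod,prop:cmp-equ} via the standard reduction of limits to equalizers of products and colimits to coequalizers of coproducts (\cite[Thm 3.4.12]{CatContext}). Your additional remarks on the $\upkappa$-restricted case are accurate but not needed beyond what \cref{prop:cmp-prod}(3) already provides.
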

\begin{proof} The result follows from Propositions \ref{prop:cmp-equ} and \ref{prop:cmp-prod} using that limits are equalizers of products and colimits are coequalizers of coproducts, see, e.g.\ \cite[Thm 3.4.12]{CatContext}.
\end{proof}

\begin{cor}\label{cor:cmp-po} Let $C=(X_C,\Comp_C)$, $C'=(X_{C'},\Comp_{C'})$, $C''=(X_{C''},\Comp_{C''})\in\Cmp_{\upkappa}$.

\begin{enumerate}
\item Let $f\colon C'\to C$ and $g\colon C''\to C$ be morphisms of compactological sets. The pullback $C'\times_C C'$ of $f$ and $g$ is given by
    \[
         \bigl(X:=\bigl\{ (x, y) \in X_{C'}\times X_{C''} \mid f(x) = g(y) \bigr\},\; \Comp:= X\cap \Comp_{C'\times C''}\bigr).
    \]
    Explicitly, a subset of $X$ belongs to $\Comp$ iff it is a closed subset of some cylinder $K'\times K''$ for $K'\in\Comp_{C'}$ and $K''\in\Comp_{K''}$.\vspace{3pt}

\item Let $f\colon C\to C'$ and $g\colon C\to C''$ be morphisms of compactological sets. The pushout $C'\sqcup_C C''$ is given by
    \[
    \bigl(X:=(X_{C'}\sqcup X_{C''})/E,\; \Comp := \uppi(\Comp_{C'\sqcup C''}) \bigr).
         %\bigl(C'\sqcup C'' \Big/ \overline{\left\{ (f'(x), g'(x)) \mid x\in C \right\}}, \Comp := \pi(\Comp_{C'\sqcup C''}) \bigr),
    \]
    Here, $E$ is the smallest closed equivalence relation containing all $(i_1(f(x)),i_2(g(x)))$ for $x\in X_C$, where $i_1$, $i_2$ denote the inclusions into the disjoint union, and $\uppi$ is the corresponding quotient map.
%Here we denote by $\overline{M}$ for $M\subseteq X\times X$ the closure of $M$ to an equivalence relation on the set $X$, closed as a subset of $C\times C$.
\end{enumerate}
\end{cor}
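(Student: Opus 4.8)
The plan is to obtain both constructions as the usual instances of the categorical recipe expressing a pullback as an equalizer of a product and a pushout as a coequalizer of a coproduct, and then to read off the claimed descriptions directly from \cref{prop:cmp-prod,prop:cmp-equ} (cf.\ the proof of \cref{cor:cmp-bicomp} and \cite[Thm 3.4.12]{CatContext}). Since $\upkappa$ is uncountable we have $2<\operatorname{cof}(\upkappa)$, so the binary products and coproducts agree in $\Cmp$ and in $\Cmp_\upkappa$ by \cref{prop:cmp-prod}(3), and equalizers and coequalizers agree by \cref{prop:cmp-equ}; hence it suffices to argue in $\Cmp$, and the result descends to $\Cmp_\upkappa$.

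For (1) I would form the product $C'\times C''$ as in \cref{prop:cmp-prod}(2), with underlying set $X_{C'}\times X_{C''}$ and compactology $\Comp_{C'\times C''}$, and then equalize the two morphisms $f\circ p_1,\,g\circ p_2\colon C'\times C''\to C$. By \cref{prop:cmp-equ}(1) the resulting equalizer has underlying set $\{(x,y)\mid f(p_1(x,y))=g(p_2(x,y))\}=\{(x,y)\mid f(x)=g(y)\}$ and subset compactology $X\cap\Comp_{C'\times C''}$, which is exactly the asserted pullback. The explicit cylinder description then follows by unwinding the product compactology: by the remark $\Comp=\Domp$ established in the proof of \cref{prop:cmp-prod}(2), a member of $\Comp_{C'\times C''}$ is precisely a closed subset of the product contained in some cylinder $K'\times K''$ with $K'\in\Comp_{C'}$ and $K''\in\Comp_{C''}$; since each such cylinder is closed in the product (as $K'$ and $K''$ are closed in their respective final topologies), a subset of $X$ lies in $\Comp$ exactly when it is a closed subset of some such cylinder.

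For (2) I would dualise: form the coproduct $C'\sqcup C''$ via \cref{prop:cmp-prod}(1), with underlying set $X_{C'}\sqcup X_{C''}$ and compactology $\Comp_{C'\sqcup C''}$, and take the coequalizer of $i_1\circ f,\,i_2\circ g\colon C\to C'\sqcup C''$. By \cref{prop:cmp-equ}(2) this coequalizer is $\bigl((X_{C'}\sqcup X_{C''})/E,\ \uppi(\Comp_{C'\sqcup C''})\bigr)$, where $E$ is the smallest closed equivalence relation (with respect to the product compactology on $(X_{C'}\sqcup X_{C''})\times(X_{C'}\sqcup X_{C''})$) containing the pairs $(i_1(f(x)),i_2(g(x)))$ for $x\in X_C$ and $\uppi$ is the quotient map\,---\,precisely the stated pushout.

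Most of this is bookkeeping; the only point that genuinely needs care is the tacit identification, in (1), of the two forms of the subset compactology, namely $X\cap\Comp_{C'\times C''}=\{K\in\Comp_{C'\times C''}\mid K\subseteq X\}=\{K\cap X\mid K\in\Comp_{C'\times C''}\}$. As in the proof of \cref{prop:cmp-equ}(1), these agree because $X$ is closed in the product: the topologies attached to $\Comp_{C'}$ and $\Comp_{C''}$ are cgwh by \cref{CGWH-COMPL-ADJ}, so the diagonal $\Delta_C$ is closed in the compactly generated product $X_C\times X_C$ by \cite[Prop 2.14]{CGWH}, whence $X=(f\circ p_1,\,g\circ p_2)^{-1}(\Delta_C)$ is closed by continuity. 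Beyond this I anticipate no real obstacle, the corollary being a formal consequence of the two preceding propositions.
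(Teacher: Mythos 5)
Your proposal is correct and takes essentially the same route as the paper, whose entire proof is to represent the pullback as the equalizer of $f\circ\uppi_1,\,g\circ\uppi_2\colon C'\times C''\to C$ and the pushout dually as a coequalizer of a coproduct, reading the descriptions off from \cref{prop:cmp-prod,prop:cmp-equ}. The extra verifications you supply\,---\,the cylinder description via $\Comp=\Domp$, closedness of $X$ in the product, and the agreement of the constructions in $\Cmp$ and $\Cmp_\upkappa$\,---\,are details the paper leaves implicit.
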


\begin{proof} Statement (1) follows by using the representation of the pullback $C'\times_C C''$ as the equalizer of $f\circ\uppi_1, g\circ\uppi_2\colon C'\times C''\rightarrow C$. Statement (2) follows dually.
\end{proof}

To put the explicit constructions above into context, we relate them to the more familiar constructions in the category of cgwh-spaces. %Indeed, both limits and colimits in $\Cmp$ can be seen as specializations of their counterparts.

\begin{lem}\label{lem:uly-cont} The forgetful functor $\topuly-\colon \Cmp\rightarrow\CGWH$ is cocontinuous and finitely continuous.
\end{lem}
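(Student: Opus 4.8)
The plan is to verify that $\topuly-$ preserves the colimits and finite limits that generate all such (co)limits, namely coproducts together with coequalizers on the one hand, and finite products together with equalizers on the other. Since every colimit is a coequalizer of a map between coproducts, and every finite limit is an equalizer of a map between finite products (see \cite[Thm 3.4.12]{CatContext}), it suffices to treat these four elementary cases separately, and in each case the relevant computation has in fact already been carried out in the preceding propositions.

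First I would address cocontinuity. For coequalizers, \cref{prop:cmp-equ}(2) states verbatim that $\topuly{\operatorname{Coeq}(f,g)}$ agrees with the coequalizer of $\topuly f$ and $\topuly g$ in $\CGWH$, so this case is immediate. For coproducts, \cref{prop:cmp-prod}(1) records that the associated topology of $\fcoprod_{i\in I}C_i$ is the usual coproduct topology; since the coproduct topology on a family of cgwh-spaces is precisely the coproduct in $\CGWH$, the functor $\topuly-$ sends the compactological coproduct to the $\CGWH$-coproduct, establishing preservation of coproducts. Combining these two observations with the fact that $\CGWH$ is cocomplete yields cocontinuity of $\topuly-$.

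Next I would treat finite continuity. For equalizers, \cref{prop:cmp-equ}(1) asserts directly that $\topuly{\operatorname{Eq}(f,g)}$ agrees with the equalizer of $\topuly f$ and $\topuly g$ in $\CGWH$, so this case is again immediate. For finite products, \cref{prop:cmp-prod}(2) states that when the index set $I$ is finite, the topology induced by the product compactology on $X=\fprod_{i\in I}X_i$ is the topology of the product computed in $\CGWH$; hence $\topuly-$ carries the finite compactological product to the finite $\CGWH$-product. Since finite limits in $\CGWH$ are built from finite products and equalizers, $\topuly-$ preserves all finite limits.

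The only point requiring mild care, and the step I expect to be the main obstacle, is the restriction to \emph{finite} products in the continuity statement: \cref{prop:cmp-prod}(2) guarantees agreement of the induced topology with the $\CGWH$-product only for finite $I$, reflecting that infinite products of cgwh-spaces need not be computed as naive topological products and that finality of a family of maps is preserved under \emph{finite} products of quotient maps but not arbitrary ones (cf.\ the use of \cite[Prop 2.20]{CGWH} in the proof of \cref{prop:cmp-prod}(2)). This is precisely why the lemma claims only \emph{finite} continuity rather than full continuity, and the proof should not attempt to extend preservation to infinite products. Once finiteness is respected, however, each of the four building blocks has been handled above, and the result follows by assembling them.
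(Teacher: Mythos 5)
Your proof is correct, but your treatment of cocontinuity differs from the paper's. For finite continuity you do exactly what the paper does: reduce to finite products and equalizers via \cite[Thm 3.4.12]{CatContext} and quote \cref{prop:cmp-prod}(2) and \cref{prop:cmp-equ}(1). For cocontinuity, however, the paper does not decompose colimits at all: it simply observes that $\topuly-$ is a left adjoint, since \cref{CGWH-COMPL-ADJ} exhibits the adjunction $\topuly- \dashv \upiota$, and left adjoints preserve all colimits. Your route instead verifies coproducts (via \cref{prop:cmp-prod}(1), together with the standard fact that disjoint unions of cgwh-spaces compute the coproduct in $\CGWH$) and coequalizers (via \cref{prop:cmp-equ}(2)), and assembles general colimits by the dual of the decomposition theorem; this is valid, since $\Cmp$ is cocomplete by \cref{cor:cmp-bicomp} and the coequalizer-of-coproducts presentation is therefore available and preserved termwise. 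What the paper's argument buys is brevity and generality: the adjunction argument is one line and automatically covers every colimit, with no need to know that coproducts in $\CGWH$ are topological disjoint unions. What your argument buys is self-containedness at the level of the explicit constructions: it uses only the concrete descriptions in \cref{prop:cmp-prod,prop:cmp-equ} and never invokes the adjunction, which also makes the asymmetry of the statement transparent\,---\,your closing remark correctly identifies that the obstruction to full continuity sits exactly in \cref{prop:cmp-prod}(2) being restricted to finite index sets (finite products of quotient maps of compactly generated spaces are quotient maps, \cite[Prop 2.20]{CGWH}, but this fails for infinite products), which is why only finite continuity can be claimed.
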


\begin{proof}Cocontinuity follows by the existence of a right adjoint as given in \cref{CGWH-COMPL-ADJ}. For finite continuity it suffices to show that the functor preserves finite products and equalizers, according to the usual formula for limits, see \cite[Thm 3.4.12]{CatContext}. This was however already shown in \cref{prop:cmp-prod,prop:cmp-equ}.
\end{proof}

We also note the following.

\begin{prop}\label{prop:chaus-incl-cocont} The natural functor $\CHaus_{\upkappa}\into\Cmp_{\upkappa}$, $(K,\uptau)\mapsto(K,\Comp_K)$ with $\Comp_K:=\{(K',\uptau|_{K'})\:|\:K'\subseteq(K,\uptau) \text{ compact}\hspace{1pt}\}$, cf.~\cref{CGWH-COMPL-ADJ}, is finitely cocontinuous.
%The fully faithful functor $\CHaus_\kappa\into\CGWH_\kappa\into\Cmp_\kappa$ preserves finite colimits.
\end{prop}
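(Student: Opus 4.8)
The plan is to factor finite cocontinuity through the standard generators of finite colimits. Write $j\colon\CHaus_\upkappa\to\Cmp_\upkappa$ for the functor in question and note that $j=\upiota|_{\CHaus_\upkappa}$, so $j(K)=(K,\Comp_K)$ is $K$ equipped with the compactology of \emph{all} its compact subsets, which for a compact Hausdorff space $K$ are exactly its closed subsets. Since every finite colimit is a coequalizer of a pair of maps between finite coproducts (the dual of \cite[Thm 3.4.12]{CatContext}, already invoked in \cref{cor:cmp-bicomp}), it suffices to prove that $j$ preserves the initial object, finite coproducts, and coequalizers. First I would record the size bookkeeping: a finite disjoint union of $\upkappa$-small compact Hausdorff spaces is again $\upkappa$-small compact Hausdorff, and a Hausdorff quotient of such a space is a continuous image of it, hence also $\upkappa$-small compact Hausdorff. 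Thus all the colimits below remain inside $\CHaus_\upkappa$, and by \cref{prop:cmp-prod,prop:cmp-equ} their $\Cmp$- and $\Cmp_\upkappa$-incarnations coincide.

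The coproduct case is immediate from the explicit formula. The initial object $\varnothing$ maps to $(\varnothing,\{\varnothing\})$, which is the empty coproduct of \cref{prop:cmp-prod}(1). For a finite family $(K_i)_i$, the coproduct $\coprod_i j(K_i)$ computed in \cref{prop:cmp-prod}(1) carries the compactology $\overline{\bigsqcup_i\Comp_{K_i}}^{\,\cup_{\operatorname{fin}}}$ together with the disjoint-union topology; since the summands are clopen in the finite disjoint union $\coprod_i K_i$, the compact subsets of $\coprod_i K_i$ are precisely the finite unions of compact subsets of the individual $K_i$. Hence this compactology equals $\Comp_{\coprod_i K_i}$ and the topologies agree, so $\coprod_i j(K_i)=j(\coprod_i K_i)$.

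The coequalizer case is the main obstacle, and its crux is to reconcile the two ambient notions of ``closed equivalence relation''. Given $f,g\colon K\to L$ in $\CHaus_\upkappa$, I would apply \cref{prop:cmp-equ}(2) to $j(f),j(g)\colon j(K)\to j(L)$ and first identify the topology that the product compactology induces on $L\times L$: by \cref{prop:cmp-prod}(2) for a two-element index set it is the $\CGWH$-product of $\topuly{j(L)}=L$ with itself (cf.\ \cref{CGWH-COMPL-ADJ}), and since $L\times L$ is already compact Hausdorff, hence cgwh, this is just the usual product topology. Consequently the relation $E$ of \cref{prop:cmp-equ}(2) is the smallest equivalence relation on $L$ that is closed in $L\times L$ and contains $R:=\{(f(x),g(x))\mid x\in K\}$, so $L/E$ is compact Hausdorff. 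This $L/E$ is the coequalizer of $f,g$ in $\CHaus$: any continuous $h\colon L\to Z$ into a compact Hausdorff (indeed cgwh) space with $hf=hg$ has $(h\times h)^{-1}(\Delta_Z)$ closed in $L\times L$ — the diagonal $\Delta_Z$ being closed by \cite[Prop 2.14]{CGWH} — and this closed equivalence relation contains $R$, hence contains $E$, so $h$ factors uniquely through the quotient map $\uppi\colon L\to L/E$.

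It then remains to match the compactologies. By \cref{prop:cmp-equ}(2) the $\Cmp$-coequalizer carries $\uppi(\Comp_{j(L)})$, the image under $\uppi$ of all compact subsets of $L$. As $\uppi$ is a continuous surjection between compact Hausdorff spaces it is closed, so the image of every compact (equivalently closed) subset of $L$ is compact, and conversely every closed $F\subseteq L/E$ equals $\uppi(\uppi^{-1}(F))$ with $\uppi^{-1}(F)$ compact. Thus $\uppi(\Comp_{j(L)})$ is exactly the collection of all compact subsets of $L/E$, i.e.\ $\Comp_{j(L/E)}$, whence $\operatorname{Coeq}(j(f),j(g))=(L/E,\Comp_{j(L/E)})=j(L/E)=j(\operatorname{Coeq}(f,g))$. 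Combining the three cases yields finite cocontinuity of $j$. The only genuinely nonformal step is this coequalizer analysis — both the identification $E=\overline{R}$ through the product topology and the transport of the compactology along $\uppi$ — while the coproduct and initial-object cases are purely combinatorial.
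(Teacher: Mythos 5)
Your proof is correct and follows essentially the same route as the paper: reduce to finite coproducts and coequalizers via the dual of the limit-decomposition theorem, identify coproducts through \cref{prop:cmp-prod}(1), and for coequalizers observe that by compactness the compactly generated product topology on $L\times L$ is the usual one, so the relation $E$ of \cref{prop:cmp-equ}(2) is the ordinary closed equivalence-relation closure and the compactology $\uppi(\Comp_{L})$ coincides with that of all compacts of $L/E$. The only difference is cosmetic: you additionally spell out the universal-property verification that $L/E$ is the coequalizer in $\CHaus$, a step the paper leaves implicit.
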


\begin{proof} By \cite[Thm 3.4.12]{CatContext} it is sufficient to show that the functor preserves finite copro\-ducts and coequalizers. If all objects we start with are $\upkappa$-small, then the same holds for their coproducts/coequalizers taken in $\CHaus$ resp.\ in $\Cmp$. Therefore, we will drop the `$\upkappa$' in the remainder of this proof from our notation.

\smallskip

Firstly we note that finite coproducts in $\CHaus$ are given by the disjoint union, e.g.\ \cite[Ex 2.2.4c]{Borceux}. Given $X,Y\in\CHaus$, let $(X\amalg Y,\Comp_{X\amalg Y})$ be the coproduct taken in $\Cmp$. Using \cref{prop:cmp-prod}(1) we get
$$
\Comp_{X\amalg Y} = \overline{\Comp_{X}\sqcup\Comp_Y}^{\cup_{\text{fin}}} =\bigl\{\fcup_{i=1}^n K_i^{
\scriptscriptstyle(1)}\sqcup K_i^{\scriptscriptstyle(2)}\:\big|\:K_i^{\scriptscriptstyle(1)}\subseteq X,\,K_i^{\scriptscriptstyle(2)}\subseteq Y \text{ compact}, n\in\NN_0\bigr\}   =  \Comp_{X\sqcup Y}.
$$
Moreover, the underlying set of $X\amalg Y$ is the disjoint union of $X$ and $Y$. Thus, $\CHaus\into\Cmp$ preserves finite coproducts.

\smallskip

Let now $f,g\colon X\rightarrow Y$ be morphisms in $\CHaus$ and denote by $Y\times Y$ the product in $\CHaus$, cf.~\cite[Ex 2.1.7g]{Borceux}, which is just the usual product of topological spaces. % It is Tychonoff's Theorem that this is the usual product of topological spaces. 
Denote by $(Y\times Y,\Comp_{Y\invamalg Y})$ the product in $\Cmp$. Using that the topology on $Y$ coming from $\Comp_Y$ is the original topology of $Y$, and using \cref{prop:cmp-prod}(2), we get that the topology $Y\times Y$ given by $\Comp_{Y\invamalg Y}$ is the k-ification of $Y\times Y$. However, by compactness of $Y\times Y$, the product topology is already compactly generated
and hence both topologies are the same. It follows that, as sets, the coequalizer of $f$ and $g$ taken in $\CHaus$ and the coequalizer taken in $\Cmp$ coincide: Indeed, they are both given by exactly the same quotient space as explained in \cref{prop:cmp-equ}. Let $Y/E$ denote the quotient in $\CHaus$ and let $\uppi\colon Y\rightarrow Y/E$ be the quotient map. For the coequalizer's compactology we obtain
\begin{eqnarray*}
\uppi(\Comp_Y)\hspace{-6pt}&=&\hspace{-6pt}\bigl\{\uppi(K)\:\big|\:K\in\Comp_Y\bigl\}\\
&=&\hspace{-6pt}\bigl\{\uppi(K)\:\big|\:K\subseteq Y \text{ compact}\bigl\}\\
& =&\hspace{-6pt}\bigl\{K\subseteq Y/E\:\big|\:K\text{ compact in the quotient topology}\bigl\}\hspace{3pt}=\hspace{1pt}\Comp_{Y/E}
\end{eqnarray*}
since the quotient topology on $Y/E$ coincides with the topology induced by $\uppi(\Comp_Y)$, cf.\ \cref{prop:cmp-equ}(2). \qedhere
\end{proof}

\begin{prop}\label{prop:cmp-gen-compact}
    For any $\upkappa$-compactological set $(X, \Comp)$ we may treat the compactology as a poset-shaped diagram $\Comp\into\CHaus_\upkappa$.
    In this sense
    $$(X,\Comp)\cong\colim_{K\in\Comp} \upiota(K,\uptau_K)$$ 
    holds for all compactological sets $(X, \Comp)$, where $\upiota\colon \CHaus_\upkappa\into\Cmp_\upkappa$ is defined as in \cref{prop:chaus-incl-cocont} and the colimit is taken in $\Cmp$.
    %Every compactological set is the filtered colimit over its diagram of compact spaces of the compactology.
    In particular, the compact spaces $\CHaus_\upkappa\into\Cmp_\upkappa$ generate the category of compactological sets under filtered colimits of diagrams containing only embeddings and this presentation is unique up to refinement of diagrams.
\end{prop}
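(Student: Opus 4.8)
The plan is to verify the universal property of the colimit directly for the candidate cocone with apex $(X,\Comp)$, rather than computing the colimit abstractly through the coproducts and coequalizers of \cref{cor:cmp-bicomp}. First I would record that $\Comp$, ordered by inclusion, is a directed poset: it is non-empty and upwards filtering by \ref{DFN-COMP-0}(2), so the associated diagram $\Comp\to\CHaus_\upkappa$, $K\mapsto(K,\uptau_K)$, is filtered. For $K_1\subseteq K_2$ the coherence condition \ref{DFN-COMP-0}(3) gives $\uptau_{K_2}|_{K_1}=\uptau_{K_1}$, so the set-inclusion $K_1\hookrightarrow K_2$ is a closed topological embedding and, after applying $\upiota$, a morphism $\upiota(K_1,\uptau_{K_1})\to\upiota(K_2,\uptau_{K_2})$ in $\Cmp_\upkappa$; these are the transition maps. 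The inclusions $K\hookrightarrow X$ then assemble into a cocone under this diagram with apex $(X,\Comp)$: each $K$ is $\uptau_K$-compact, hence lies in $\Comp$ together with all its compact subspaces by \ref{DFN-COMP-0}(4), so $K\hookrightarrow X$ is indeed a morphism $\upiota(K,\uptau_K)\to(X,\Comp)$, and the cocone compatibility is just the compatibility of subset inclusions.

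For the universal property I would take an arbitrary compactological set $(Y,\Eomp)$ together with a compatible family of morphisms $f_K\colon\upiota(K,\uptau_K)\to(Y,\Eomp)$ and construct the unique factorization $f\colon(X,\Comp)\to(Y,\Eomp)$. Since $\Comp$ covers $X$ by \ref{DFN-COMP-0}(1), and since for $x\in K_1\cap K_2$ one may pass to a common upper bound $K_3\supseteq K_1,K_2$ and use compatibility through $K_3$ to obtain $f_{K_1}(x)=f_{K_2}(x)$, the underlying set maps glue to a single well-defined map $f\colon X\to Y$ with $f|_K=f_K$. That $f$ is a morphism of compactological sets is immediate from the fact that each $f_K$ is one: for $K\in\Comp$ the set $K$ is the top element of the compactology of $\upiota(K,\uptau_K)$, so $f(K)=f_K(K)\in\Eomp$, and the continuity of $f|_K=f_K|_K$ with respect to $\uptau_K$ is part of $f_K$ being a morphism. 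Uniqueness is forced because $f$ is determined on the covering family $\Comp$. As this verification runs against arbitrary compactological sets, it yields $(X,\Comp)\cong\colim_{K\in\Comp}\upiota(K,\uptau_K)$ in $\Cmp$ (and equally in $\Cmp_\upkappa$, every $K$ being $\upkappa$-small).

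The \emph{in particular} statement then follows: the diagram above is filtered and all its transition maps are embeddings, since they carry the subspace compactology $\Comp_{K_1}=\{L\in\Comp_{K_2}\mid L\subseteq K_1\}$ by coherence, in the sense of the subset compactologies of \cref{prop:cmp-equ}(1). Thus every compactological set is exhibited as a filtered colimit of objects of $\CHaus_\upkappa$ along embeddings, which is the asserted generation. For uniqueness up to refinement I would argue by cofinality: given any other presentation $(X,\Comp)\cong\colim_{i\in\catI}\upiota(L_i)$ by a filtered embedding-diagram, each structure map, being a morphism, sends $L_i$ to an element of $\Comp$, while conversely every $K\in\Comp$, being $\uptau_K$-compact, is contained in the image of some $L_i$; this identifies $\Comp$ as a cofinal system inside $\catI$, so the two diagrams share the common refinement furnished by $\Comp$. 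I expect this last cofinality bookkeeping---in particular the claim that a compact subset of a filtered colimit of closed embeddings already sits in a single stage---rather than the colimit computation itself, to be the main obstacle; it is most cleanly organized through the Ind-category description $\Cmp\cong\catstyle{Ind_{emb}}(\CHaus)$ of \cref{prop:ind-cmp}.
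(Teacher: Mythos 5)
Your proof of the colimit isomorphism itself is correct and takes essentially the same route as the paper: the paper also verifies the universal property directly, gluing a compatible family $f_K$ along the covering $X=\fcup_{K\in\Comp}K$ and observing that $f(K)=f_K(K)\in\Eomp$ and continuity on each $K$ are part of the hypothesis; your additional bookkeeping (filteredness of $\Comp$, the transition maps being closed embeddings, well-definedness of the cocone via upper bounds) only makes explicit what the paper leaves implicit.

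The genuine gap is in the uniqueness-up-to-refinement part. You correctly isolate the key claim---that every element of the compactology of $\colim_{i\in\catI}\upiota(L_i)$, for a filtered diagram of embeddings, is contained in the image of a single stage---but you propose to discharge it via the equivalence $\Cmp\cong\catstyle{Ind_{emb}}(\CHaus)$ of \cref{prop:ind-cmp}. That is circular: \cref{prop:ind-cmp} is deduced from \cref{lem:cmp-emb-comp}, whose proof explicitly invokes ``the uniqueness result'' of the very proposition you are proving, so neither may be used here. Note also that the claim does not follow from ``being $\uptau_K$-compact'' alone, as your phrasing suggests: what is needed is a statement about the colimit's \emph{compactology}, not about compact subsets of the underlying topological colimit (for which single-stage factorization is a delicate matter in general filtered systems). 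The non-circular way to close the gap---and this is what the paper's proof does---is to compute the colimit with the tools already available before this proposition: by \cref{cor:cmp-bicomp} the colimit is a coequalizer of a coproduct, by \cref{prop:cmp-prod}(1) the coproduct's compactology consists of finite unions of stage compacta, and by \cref{prop:cmp-equ}(2) the coequalizer's compactology is the image of these under the quotient map $\uppi$; filteredness absorbs any finite union into a single stage, and since the transition maps are embeddings, $\uppi$ is injective on each stage, so the compactology of $\colim\upalpha$ consists exactly of the closed subsets of the stage images. This single observation yields both directions of your cofinality argument at once and exhibits the compactology as the shared refinement of any two presentations.
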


\begin{proof} 
    We show this by checking the universal property.
    For this let $(Y,\Eomp)\in\Cmp$ be given together with morphisms $f_K\colon K\rightarrow Y$ of compactological sets such that $f_K|_{K'}=f_K'$ holds for all $K,K'\in\Comp$ and $K'\subseteq K$. As $X=\fcup_{K\in\Comp}K$ holds, we get a unique map $f\colon X\rightarrow Y$ with $f|_K=f_K$. By assumption, the $f_K$ are continuous as maps of underlying topological spaces. Furthermore, for $K\in\Comp$ we have $f(K)=f_K(K)\in\Eomp$ again by assumption. Thus $f$ is a morphism of compactological sets. Inversely, it follows by definition that any map of compactological sets induces such a family of functions.

    To see uniqueness up to refinement of this presentation, observe that the sets of the compactology of $\colim \upalpha$ for $\upalpha: \catI\to\CHaus\into \Cmp$ filtered on embeddings are exactly the closed subsets of compacts appearing in $\upalpha$, as included by the colimit cone. It follows that the compactology of $\colim \upalpha$ is a refinement of $\upalpha$ itself and the compactology is therefore a shared refinement for all monomorphic filtered diagrams of compacts describing the same compactological set.
   %We let all compactological spaces be given according to \cref{DFN-COMP-2}. Let $F\colon \Comp\to\CHaus_\upkappa\into\Cmp_\upkappa$ denote the filtered diagram defined by the compactology of an arbitrary $C=(X, \Comp, (\tau_K)_{K\in\Comp})\in\Cmp_\upkappa$. A cone $F\Longrightarrow (Y, \Comp_2, (\tau_{K})_{K\in\Comp_2})$ corresponds to a collection of maps $(f_K\colon K\to X)_{K\in\Comp}$ such that $f_K|_{K\cap K'} = f_{K'}|_{K\cap K'}$ holds for all $K, K'\in\Comp$, $\ran(f_K)$ is always  an element of the compactology $\Comp_2$ and $f_K\colon (K, \tau_K)\to(\ran(f_K), \tau_{\ran(f_K)})$ is continuous.
%
    %It follows from the congruence condition that the maps assemble into a map of sets $f: \bigcup_{K\in\Comp} K = X\to Y$ and by definition of the final topology this map is continuous with respect to the final topologies if all component maps are. This however follows immediately from the assumptions and the continuity of the relevant embeddings $\ran(f_K)\into Y$. Finally, $f(K) = \ran(f_K) \in \Comp_2$ holds for all $K\in\Comp$. Thus any cone under $F$ lifts to a map out of $C$ and this is lift unique by the covering property of the compactology and the fact that any morphism of compactological sets is uniquely identified by its underlying map of sets.
\end{proof}

\begin{lem}\label{lem:cmp-emb-comp}
    Let $K$ be a compact space. The functor $\Cmp(\upiota(K), -): \Cmp\to\Set$ preserves filtered colimits of compact spaces with all transition maps given by embeddings. The analogous result holds under restriction to some uncountable strong limit cardinal $\upkappa$.
\end{lem}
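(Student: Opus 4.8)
The plan is to compute both sides explicitly and to exhibit the canonical comparison map as a bijection. First I would record what the representing object does: since $K$ is compact, $K$ is the top element of the compactology of $\upiota(K)$, so a morphism $\upiota(K)\to(Y,\Eomp)$ in $\Cmp$ is exactly a continuous map $f\colon K\to Y$ (into the final topology of $Y$) with $f(K)\in\Eomp$; the conditions on the smaller compacts $K'\subseteq K$ are then automatic by stability under compact subsets, \ref{DFN-COMP-0}(4). Next I would fix a filtered diagram $\upalpha\colon\catI\to\CHaus\into\Cmp$ with all transition maps embeddings, write $\upalpha(i)=\upiota(L_i)$, and set $(X,\Comp)=\colim_i\upalpha(i)$, which exists by \cref{cor:cmp-bicomp}. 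As identified in the proof of \cref{prop:cmp-gen-compact}, the underlying set of $X$ is the directed union of the $L_i$ along the injective transition maps, and $\Comp$ consists exactly of the closed subsets of the images $c_i(L_i)$, where $c_i\colon L_i\to X$ are the colimit legs. Moreover each $c_i$ is a closed topological embedding: its image is closed because $(X,\uptau)$ is weak Hausdorff by \cref{CGWH-COMPL-ADJ}, and a continuous bijection from a compact onto a Hausdorff space is a homeomorphism.

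For surjectivity of the comparison map $\colim_i\Cmp(\upiota(K),\upalpha(i))\to\Cmp(\upiota(K),(X,\Comp))$ I would start from a morphism $f\colon\upiota(K)\to(X,\Comp)$. Then $f(K)\in\Comp$, so $f(K)$ is a closed subset of some $c_i(L_i)$. Since $c_i$ is an embedding and $f(K)\subseteq c_i(L_i)$, the corestriction $\tilde f:=c_i^{-1}\circ f\colon K\to L_i$ is continuous, and $\tilde f(K)=c_i^{-1}(f(K))$ is closed in $L_i$, hence compact; thus $\tilde f$ is a morphism $\upiota(K)\to\upalpha(i)$ with $c_i\circ\tilde f=f$, realizing $f$ in the colimit.

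For injectivity I would take morphisms $f\colon\upiota(K)\to\upalpha(i)$ and $g\colon\upiota(K)\to\upalpha(j)$ with $c_i\circ f=c_j\circ g$. Using filteredness I pick $k$ with structure maps $\phi_{ik},\phi_{jk}$ into $\upalpha(k)$; then $c_k\circ(\phi_{ik}\circ f)=c_i\circ f=c_j\circ g=c_k\circ(\phi_{jk}\circ g)$. Because the transition maps are injective, the leg $c_k$ is injective, so $\phi_{ik}\circ f=\phi_{jk}\circ g$ and $f,g$ already coincide at stage $k$. This shows the comparison map is bijective, proving preservation. The $\upkappa$-restricted statement follows by the identical argument, since every space occurring stays $\upkappa$-small and the colimit and the morphism sets are then computed inside $\Cmp_\upkappa$ and $\CHaus_\upkappa$ without change.

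The step deserving the most care, and the main obstacle, is the explicit identification of $(X,\Comp)$ in the first paragraph: that the legs $c_i$ are genuine topological embeddings with closed image and that $\Comp$ is precisely the closed subsets of the $c_i(L_i)$. Everything afterwards reduces to the elementary behaviour of filtered colimits of sets along injections, namely injective legs and factorization through a single stage, but it is exactly the compactness of $K$ together with this structural description that forces the required factorization through one object of the diagram.
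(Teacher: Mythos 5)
Your proof is correct, and it establishes the lemma by a direct verification where the paper argues more abstractly. The paper's proof first notes that for \emph{any} compactological set $C$, every morphism $\upiota(K)\to C$ factors through a member of $\Comp_C$ (since its image lies in $\Comp_C$), so that $\Cmp(\upiota(K),C)$ is itself the monomorphic filtered colimit of the sets $\Cmp(\upiota(K),\upiota(L))$ over $L\in\Comp_C$; together with $C\cong\colim_{L\in\Comp_C}\upiota(L)$ from \cref{prop:cmp-gen-compact} this gives preservation for the canonical compactology-indexed diagrams, and the case of an arbitrary filtered diagram of embeddings is then reduced to this one via the uniqueness-up-to-refinement statement of \cref{prop:cmp-gen-compact} and the invariance of filtered colimits under refinement. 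You instead work with the arbitrary diagram from the start: you describe its colimit explicitly (underlying set the directed union, compactology the closed subsets of the leg images, legs closed embeddings) and then check bijectivity of the comparison map by hand\,---\,your surjectivity step is exactly the same factorization-through-a-stage argument that powers the paper's proof, while your injectivity step is the elementary observation that the legs are injective. What your route buys is self-containedness: no appeal to cofinality/refinement invariance is needed, and the mechanism forcing factorization (compactness of $K$ plus the fact that $f(K)$ lies in the colimit's compactology) is laid bare. What it costs is that you must import the structural description of the colimit, which you legitimately cite from the proof of \cref{prop:cmp-gen-compact}; that description is precisely the content the paper's refinement argument packages abstractly, so the two proofs rest on the same key fact, organized differently. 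Your remark that the $\upkappa$-restricted case goes through unchanged matches the paper, which likewise leaves $\upkappa$ implicit.
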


\begin{proof}
    As the proof does not depend on a possible choice of $\upkappa$, we leave it implicit. 
    By definition any map $\upiota(K)\to C$ from a compactum $K$ to a compactological set $C$ factors through some element of the compactology. Hence the maps $\Cmp(\upiota(K), C)$ are given by the monomorphic filtered colimit over $\Cmp(\upiota(K), \upiota(L))$ where $L$ runs through the elements of $\Comp_C$. 
    Referencing \cref{prop:cmp-gen-compact}, we deduce that $\Cmp(\upiota(K), -)$ preserves those filtered colimits induced by compactologies. The uniqueness result of the Proposition implies that all filtered colimits of embeddings between compact spaces can be refined to an element in this class and it is a general fact that filtered colimits are invariant under refinement. The claim follows directly from this.
\end{proof}

\begin{prop}\label{prop:ind-cmp}
    The category $\Cmp_\upkappa$ of compactological sets is equivalent to the category $\catstyle{Ind_{emb}}(\CHaus_\upkappa)$ of filtered diagrams of ($\upkappa$-small) compact spaces, with all transition maps given by topological embeddings. The analogous result is true without the restriction to $\upkappa$.
\end{prop}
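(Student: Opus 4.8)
The plan is to exhibit the colimit functor
$$
\colim\colon \catstyle{Ind_{emb}}(\CHaus_\upkappa)\longrightarrow\Cmp_\upkappa,\qquad \upalpha\longmapsto\colim_i\upiota(\upalpha(i)),
$$
as an equivalence of categories. This functor is well-defined since $\Cmp_\upkappa$ admits filtered colimits by \cref{cor:cmp-bicomp}; on morphisms it is the colimit-extension (left Kan extension along $\CHaus_\upkappa\into\catstyle{Ind_{emb}}(\CHaus_\upkappa)$) of the embedding $\upiota\colon\CHaus_\upkappa\into\Cmp_\upkappa$ of \cref{prop:chaus-incl-cocont}. Here $\upiota$ is fully faithful, since it factors as $\CHaus_\upkappa\into\CGWH_\upkappa\into\Cmp_\upkappa$, the first arrow being a full subcategory inclusion and the second fully faithful by \cref{CGWH-COMPL-ADJ}. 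Its candidate quasi-inverse sends a compactological set $(X,\Comp)$ to its compactology, regarded as the poset-shaped embedding diagram $\Comp\into\CHaus_\upkappa$ of \cref{prop:cmp-gen-compact}. I would then show that the colimit functor is essentially surjective and fully faithful, whence it is an equivalence.

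Essential surjectivity is immediate from \cref{prop:cmp-gen-compact}, which identifies every $(X,\Comp)$ with $\colim_{K\in\Comp}\upiota(K,\uptau_K)$, a filtered colimit of $\upkappa$-small compacta along embeddings; this simultaneously settles the round-trip $\Cmp_\upkappa\to\catstyle{Ind_{emb}}(\CHaus_\upkappa)\to\Cmp_\upkappa$. For the opposite round-trip, starting from a diagram $\upalpha$ one must check that the compactology of $\colim\upalpha$ is isomorphic to $\upalpha$ as an object of $\catstyle{Ind_{emb}}(\CHaus_\upkappa)$. This is exactly the uniqueness-up-to-refinement statement in \cref{prop:cmp-gen-compact}: the compactology of $\colim\upalpha$ is a shared refinement of $\upalpha$, and a refinement induces an isomorphism of Ind-objects.

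The heart of the argument is full faithfulness. For objects $\upalpha,\upbeta$ I would compute, using first the universal property of the colimit $\colim\upalpha$, then \cref{lem:cmp-emb-comp} applied to the embedding diagram $\upbeta$, and finally the full faithfulness of $\upiota$,
\begin{align*}
\Cmp_\upkappa\bigl(\colim_i\upiota(\upalpha(i)),\,\colim_j\upiota(\upbeta(j))\bigr)
&\cong \lim_i\Cmp_\upkappa\bigl(\upiota(\upalpha(i)),\,\colim_j\upiota(\upbeta(j))\bigr)\\
&\cong \lim_i\colim_j\Cmp_\upkappa\bigl(\upiota(\upalpha(i)),\upiota(\upbeta(j))\bigr)\\
&\cong \lim_i\colim_j\CHaus_\upkappa\bigl(\upalpha(i),\upbeta(j)\bigr).
\end{align*}
The right-hand side is, by definition, the set of morphisms $\upalpha\to\upbeta$ in $\catstyle{Ind_{emb}}(\CHaus_\upkappa)$ (the standard $\lim\colim$ formula for an Ind-category). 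I would then verify that this chain of isomorphisms agrees with the map induced by the colimit functor on hom-sets, so that the functor is fully faithful.

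The main obstacle I anticipate is the middle isomorphism: it requires that $\Cmp_\upkappa(\upiota(\upalpha(i)),-)$ commute with the filtered colimit $\colim_j\upiota(\upbeta(j))$, which is \emph{not} automatic for arbitrary filtered colimits but holds here precisely because $\upbeta$ has embedding transition maps. This is the content of \cref{lem:cmp-emb-comp}, and it is the reason the embedding restriction on diagrams cannot be dropped. A secondary subtlety is naturality: one must confirm that the displayed bijection is the action of the colimit functor on morphisms, rather than merely an abstract isomorphism of hom-sets, which I would check by tracing a representative morphism $\upiota(\upalpha(i))\to\upiota(\upbeta(j))$ through the colimit cocone. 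Finally, the unrestricted statement $\Cmp\cong\catstyle{Ind_{emb}}(\CHaus)$ follows by the identical argument, as \cref{prop:cmp-gen-compact,lem:cmp-emb-comp} and the full faithfulness of $\upiota$ all hold without the cardinality restriction.
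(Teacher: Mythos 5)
Your proposal is correct and follows essentially the same route as the paper's proof: the paper also realizes the equivalence via the colimit functor $\catstyle{Ind_{emb}}(\CHaus_\upkappa)\to\Cmp_\upkappa$, establishes full faithfulness by the identical $\lim\colim$ hom-set computation resting on \cref{lem:cmp-emb-comp} together with full faithfulness of $\upiota$, and deduces essential surjectivity from \cref{prop:cmp-gen-compact}. Your explicit flagging of the naturality check and of why the embedding restriction is indispensable are points the paper leaves implicit, but they do not constitute a different argument.
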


\begin{proof}
    To prove this, we adapt \cite[Prop 6.3.4]{KashiwaraShapira05}. As the proof does not depend on the restriction, we leave the cardinal $\upkappa$ implicit and take all categories to be appropriately restricted where relevant.
    
    Let $\upalpha: \catI\to\CHaus$, $\upbeta: \catJ\to\CHaus$ be two filtered diagrams of embeddings and denote the functor $\catstyle{Ind_{emb}}\to\Cmp$ sending a formal colimit to some chosen realization in $\Cmp$ by $\underset{\smash[t]{\raisebox{2pt}{$\rightarrow$}}} F$. From \cref{lem:cmp-emb-comp} and the definition of Ind-categories we obtain
    \begin{align*}
        \catstyle{Ind_{emb}}(\CHaus)(\upalpha, \upbeta) :\!&= \lim_{i\in\catI} \colim_{j\in\catJ} \CHaus(\upalpha(i), \upbeta(j)) \\
        &\cong \lim_{i\in\catI} \colim_{j\in\catJ} \Cmp(\upiota(\upalpha(i)), \upiota(\upbeta(j))) \\
        &\cong \Cmp(\colim_\catI \upiota\circ\upalpha, \colim_\catJ \upiota\circ\upbeta) \\
        &\cong \Cmp(\underset{\smash[t]{\raisebox{2pt}{$\rightarrow$}}} F(\upalpha), \underset{\smash[t]{\raisebox{2pt}{$\rightarrow$}}} F(\upbeta)).
    \end{align*}
    Hence $\underset{\smash[t]{\raisebox{2pt}{$\rightarrow$}}} F$ is fully faithful. It follows directly from \cref{prop:cmp-gen-compact} that $F$ is also essentially surjective.
\end{proof}

\section{Quasi-separated condensed sets are compactological}\label{SEC:Equiv}

The beginning of this chapter contains a streamlined introduction to (quasicompact and quasiseparated) condensed sets. For the convenience of the reader we repeat basic results relevant to our purpose and add several remarks. Our main reference is the first set of lecture notes by Clausen, Scholze \cite{ScholzeCondensedMath}.

\smallskip

Let $\catC\in\{\Set, \Grp, \RMod, \KAlg,\dots\}$ be a category of `algebraic objects'. More formally, let $\catC$ be finitary monadic over $\Set$, see, e.g.\ \cite[Dfns 5.3.1 and 5.5.5]{CatContext}. As previously, we fix an uncountable %sufficiently large
strong limit cardinal $\upkappa$ of cofinality $\uplambda$ and let $\CHaus_\upkappa$ denote the category of compact spaces of cardinality below $\upkappa$. Now we define $\upkappa$-condensed sets as sheaves on $\CHaus_\upkappa$:

\begin{dfn}[\protect{\cite{ScholzeCondensedMath}}]\label{def:cond}
    A \emph{$\upkappa$-condensed set/group/module/algebra/...}, or more generally a \emph{$\upkappa$-condensed $\catC$-object}, is a functor
    \[X\colon\CHaus^{\op}_\upkappa \to \catC,\]
    such that:
    \begin{enumerate}
        \item $X$ maps finite coproducts of $\CHaus_{\upkappa}$ to products in $\catC$.
        \item Any epimorphism $f\colon K\onto L$ induces a bijection %on underlying sets
        \[
                X(f)\colon X(L) \overset{\sim\hspace{1pt}}{\longrightarrow}\bigl\{
                    \upphi\in X(K) \mid X(\uppi_1)(\upphi) = X(\uppi_2)(\upphi)
                \bigr\}
            \]
            for the two projection maps $\begin{tikzcd}
                K\times_L K \ar[r, shift left, "\uppi_1"] \ar[r, shift right, "\uppi_2"'] & K.
            \end{tikzcd}$
    \end{enumerate}
    The category $\Cond_\upkappa(\catC)$ of $\upkappa$-condensed $\catC$-objects is the full subcategory of $\Fun(\CHaus_\upkappa^{\op}, \catC)$ on the $\upkappa$-condensed objects. We write $\Cond_\upkappa:=\Cond_\upkappa(\Set)$ for the category  of $\upkappa$-condensed sets. Given $X\in\Cond_\upkappa$ and denoting by $*$ the singleton, $X(*)$ is the \emph{underlying set}.\hfill\diam
\end{dfn}

\begin{rmk}\label{rem:cond-cond}\begin{myitemize}
    \item[(1)] The restriction to $\upkappa$-small spaces is necessary to assure that the collection of $\upkappa$-condensed sets is well-defined without passing to stronger axiomatic foundations than classical set theory. By \cite[Prop 2.9 and Rmk 2.10]{ScholzeCondensedMath} for $\upkappa<\upkappa'$, the restriction $\Cond_{\upkappa'}(\catC) \onto \Cond_\upkappa(\catC)$ admits a fully faithful left adjoint $\Cond_\upkappa(\catC)\rightarrow\Cond_{\upkappa'}(\catC)$, which commutes with $\uplambda$-small limits and all colimits.
    %Justified by this, we leave the chosen $\upkappa$ mostly implicit in the following discussion and assume that $\uplambda$ and $\upkappa$ were chosen sufficiently large. 
    To avoid a choice of $\upkappa$ altogether, one may pass to the (large) colimit of the totally ordered diagram over all categories of $\upkappa$-condensed sets. We will reference this category in a few cases of interest as $\Cond$, without an attached cardinal, see \cite[Dfn 2.11]{ScholzeCondensedMath}.%. For further reference, see \cite[Prop 2.9 and Dfn 2.11]{ScholzeCondensedMath}.}
    \vspace{3pt}
        %\item 
            %Using the finitary monadicity of $\catC$ (and the axiom of choice) one may show that the category of $\upkappa$-condensed objects is complete and cocomplete and for $\upkappa<\upkappa'$, the restriction $\Cond_{\upkappa'}(\catC) \onto \Cond_\upkappa(\catC)$ admits a fully faithful left adjoint $\Cond_\upkappa(\catC)\into\Cond_{\upkappa'}(\catC)$, that commutes with $\lambda$-small limits and all colimits.
            %
        \item[(2)] Condition \ref{def:cond}(2) is equivalent to requiring that $X$ maps coequalizers of kernel pairs to equalizers. The used simplification may be derived from the fact that any monadic functor $\catC\to\Set$ reflects isomorphisms. \cref{def:cond}(2) means that the elements of $X(L)$ are precisely those elements of $X(K)$ that are constant on the fibers of $f$, compare with \cref{rem:eval}(1).
        \vspace{3pt}
        \item[(3)] One can equivalently define the category of condensed sets as a category of sheaves on one of multiple different sites \cite[Dfn~1.2, Prop~2.3 and Prop~2.7]{ScholzeCondensedMath}, see \cite{Asgeir}. In fact, the given definition just spells this out for the appropriate site on the category $\CHaus_\kappa$.
        \vspace{3pt}
        %It is equivalent to define the category of condensed sets as the category of sheaves on the site of $\upkappa$-small compact Hausdorff spaces with the coverings given by finite jointly surjective families, which is in turn equivalent to the category of sheaves on the $\upkappa$-restricted pro\'etale site of the point.
        \item[(4)] The requirement that $\catC$ is at least finitary monadic over $\Set$ may be dropped as long as one makes sure that  $\catC$-valued sheaves are sufficiently well-behaved, see for example \cite[Ch 17.4]{KashiwaraShapira05}. As a cautionary example, for which the theory fails to be well-behaved, consider $\catC=\catstyle{Ban}$, the category of Banach spaces.\hfill\diam
        % One may additionally require that the forgetful functor $\catC\to\Set$ is (finitely) continuous and reflects isomorphisms. This ensures that the induced functor $\Cond_\upkappa(\catC)\to\Cond_\upkappa$ is reasonably well behaved. This functor plays a central role in the extension of certain definitions from condensed sets to condensed $\catC$-objects. 
    \end{myitemize}
\end{rmk}

%The monadic adjunction $F \dashv U$ between the free construction $F: \Set\to\catC$ and the forgetful functor $U$
% \[\begin{tikzcd}
%     \catC
%         \ar[r, shift left, bend left, "U"{name=right}] &
%     \Set
%         \ar[l, shift left, bend left, "F"{name=left}]
%         \ar[phantom, from=left, to=right, "\bot"]
% \end{tikzcd}\]
%always lifts to a monad $U'\circ F'$ on $\Cond$. Under the assumption that $U\colon \catC\to\Set$ preserves filtered colimits, its lift commutes with the sheafification. In this situation one may check that $\Cond(\catC) \simeq \Cond^{U'\circ F'}$,\NOTE{Benjamin}{Check that when $U'$ commutes with the sheafification, $U'$ preserves all colimits that $U$ preserves.} where the right hand side denotes the category of $U'\circ F'$-algebras in $\Cond$. The monads for which this holds are precisely the finitary monads. In particular they include the classical algebraic categories, $\catC\in\{\Set, \Grp, \RMod, \KAlg...\}$. Concretely this means that we may interpret condensed abelian groups, condensed modules, etc., as condensed sets equipped with an abelian group structure, respectively a module structure, etc.. We therefore continue by studying the special case of condensed sets. Most definitions, properties and other results can then be carried over to other condensed categories of interest. We return to this at the end of the section.

In the remainder of this article we will restrict ourselves to $\catC=\Set$; the case of `algebraic categories' $\catC$ will be treated in a forthcoming paper

\begin{dfn} We denote by $\yo: \CHaus_\upkappa \into \Fun(\CHaus_\upkappa^{\op}, \Set)$ the Yoneda embedding, which defines a functor into $\upkappa$-condensed sets. For $K\in\CHaus_\upkappa$ we write $\underline K := \yo(K) = \CHaus_\upkappa(-, K)$ and refer to such condensed sets as \emph{representable}.\hfill\diam
\end{dfn}

Indeed, one can read any topological space $X$ as condensed set $\underline{X}:= \Top(\upiota(-), X)$, see \cite[Ex 1.5]{ScholzeCondensedMath}. We will make use of this only in \cref{thm:qs-hausd}. The term `representable' will always mean represented by a $\upkappa$-small compact space.

%\begin{rmk}\NOTE{Benjamin}{Like this maybe?}
 %   The above functor extends to a functor $\underline{(-)}: \Top\to\Cond_\upkappa$, obtained by restricting the Yoneda embedding along the inclusion $\upiota: \CHaus_\upkappa\into\Top$. We will only reference this concretely in \cref{thm:qs-hausd} and the term \enquote{representable} will always mean representability by a $\upkappa$-small compact space.
%\end{rmk}

% If we read, via the Yoneda Lemma, $\upphi\equiv(\upphi_{K'})_{K'\in\CHaus_{\upkappa}}\colon\CHaus(-,K)\rightarrow X$ as a natural transformation, then it is easy to check that $\upphi_*$ as above coincides with the component $\upphi_{K'}$ for $K'=*$.

\begin{rmk}\label{rem:eval}\begin{myitemize}\item[(1)] By definition,  any element $\upphi\in\underline K(K') = \CHaus_\upkappa(K', K)$ is a map of spaces $K'\to K$ which agrees with the post-composition $\upphi_*\colon\underline K'(*)\to \underline K(*)$ if we identify $K\equiv{}\underline{K}(*)$. Let now $X$ be an arbitrary condensed set and let $\upphi\in X(K)$ be given. By the Yoneda lemma we may read $\upphi\colon\CHaus_\upkappa(-,K)\rightarrow X$ as a natural transformation given by $(\upphi_{L})_{L\in\CHaus_{\upkappa}}$. Specializing $L=*$ and using how the bijection in the Yoneda lemma looks like, we get a map
$$
\upphi_*: K\cong\underline K(*)\to X(*),\;\upphi_*(k)=X(\underline{k})(\upphi)
$$
where we identify $k\in K$ with the map $\underline{k}\colon*\rightarrow K$, $\operatorname{pt}\mapsto k$. Using suggestive notation, we alternatively write $\upphi_*(k)=X(\{k\}\hookrightarrow K)(\upphi)$.

\vspace{3pt}

\item[(2)] The assignment $\upphi\mapsto\upphi_*$ explained in (1) is not necessarily injective outside of representable condensed sets and hence the forgetful functor
$$
U\colon\Cond_{\upkappa}\to\Set,\;X\mapsto X(*),
$$
is in general neither full nor faithful.

\vspace{3pt}

\item[(3)] Let $X$ be a condensed set. Then we may call an element $x\in X(*)$ a `point of $X$'. Using (1) the point $x$ corresponds to the map $x_*\colon\{x\}\cong\CHaus(*,\{x\})\rightarrow X(*)$, which has as range precisely $\{x\}$. %A suggestive, though slightly abusive, notation for points of $X$ is therefore $\{x\}\hookrightarrow X$. 

\end{myitemize}
    % OLD Remark:
    %By definition,  any element $\upphi\in\underline K(K') = \CHaus_\upkappa(K', K)$ is a map of spaces $K'\to K$ which agrees on sets with the post-composition $\upphi_*\colon\underline K'(*)\to \underline K(*)$. This generalizes to arbitrary condensed sets $X$, for which any $\upphi\in X(K)$ defines a map of sets
    %$$
    %\upphi_*: K\cong\underline K(*)\to X(*),\;\upphi_*(k):=\ev_k(\upphi)
    %$$
    %where $\ev_k:=X(\upiota_k: \{k\}\into K)$ is the `evaluation map'. This assignment is consistent with the map $\underline K\to X$ induced by $\upphi$ according to the Yoneda lemma.
    %Notice, that the assignment is not necessarily injective outside of representable condensed sets and hence the forgetful functor
    %$$
    %U\colon\Cond_{\upkappa}\to\Set,\;X\mapsto X(*),
    %$$
    %is in general neither full nor faithful.\hfill\diam{}
\end{rmk}

\begin{dfn}[\protect{\cite[Prop 1.7]{ScholzeCondensedMath}}]\label{def:uly}
    Let $X$ be a $\upkappa$-condensed set. We define the \emph{underlying space} $\topuly X$ of $X$ to be the underlying set $X(*)$ together with the final topology with respect to the collection of maps
    \[
        \bigl\{ \upphi_*: K \to X(*) \mid K\in \CHaus_\upkappa,\;\upphi\in X(K) \bigr\}.
    \]
    This defines a functor $\topuly-\colon\Cond_\upkappa\to\cgTop$ into the category of compactly generated spaces, whose objects we introduced in \cref{CGWH-DFN}(2).\hfill\diam{}
\end{dfn}

Notice that we used the notation $\topuly{-}$ also for the underlying topological space of a \emph{compactological} set in \cref{SEC:Cmp}; we will see later that \cref{def:uly} actually extends our previous definition.

\smallskip

%\begin{lem}\label{lem:repr-concrete}
    %Let $Y\in\Cond_{\upkappa}$ be representable. Then $Y$ is a concrete sheaf, i.e., the forgetful map $\Cond_{\upkappa}(X, Y) \to \Set(U(X), U(Y))$ is injective for any $X\in\Cond_{\upkappa}$.
%\end{lem}

%\begin{proof}
   % If $X$ is representable as well, this  is a direct consequence of the Yoneda lemma and of $\CHaus_{\upkappa}$ being concrete. For arbitrary $X$ it follows from the fact that the collection of all maps with representable domain is epimorphic: This implies that for every two distinct maps $\upphi, \upphi'\colon X\to Y$ there exists some map $\uppsi\colon \yo(K)\to X$ such that $\upphi\circ\uppsi \neq \upphi'\circ\uppsi$. As these composites get mapped faithfully to $\Set$, $U(\upphi)$ and $U(\uppsi)$ must differ already.
%\end{proof}

%We want to define the more general class of condensed sets which are non-degenerate in this sense. We begin with a different class, closely related to the representable condensed sets.

We now want to introduce the two relevant classes of quasicompact and quasiseparated condensed sets. We will see that they are closely related to the appropriate topological namesakes.

\begin{dfn}[\protect{\cite[Dfn 2.5]{ScholzeComplexGeometry}}]
    Let $X$ be any condensed set. A family of morphisms $(\underline{K_i}\to X)_{i\in I}$ is called a \emph{covering} if it is jointly epimorphic:
    \[\fcoprod_{i\in I} \underline{K_i} \onto X.\]
    A condensed set $X$ is called \emph{quasicompact} whenever any covering $(\underline {K_i} \to X)_{i\in I}$ admits a finite subcovering.\hfill\diam{}
\end{dfn}

\begin{lem}[\protect{\cite[Dfn 2.5]{ScholzeComplexGeometry}}]\label{lem:qc-epi}
    A condensed set $X$ is quasicompact if and only if it admits an epimorphism from a representable. \qed
\end{lem}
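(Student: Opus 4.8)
The plan is to prove the two implications separately, with the technical heart being the auxiliary fact that every representable $\underline{K}$ is itself quasicompact. The engine for everything is a concrete description of maps out of a representable into a coproduct: for $S\in\CHaus_\upkappa$, a section of $\fcoprod_{i\in I}\underline{K_i}$ over $S$ is the sheafification of the presheaf coproduct, and unwinds to a \emph{clopen} decomposition $S=\bigsqcup_i S_i$ together with continuous maps $S_i\to K_i$. Since $S$ is compact, the associated ``index map'' $S\to I$ into the discrete set $I$ has finite image, so only finitely many $S_i$ are non-empty. Hence any morphism $\underline{S}\to\fcoprod_{i\in I}\underline{K_i}$ factors through a finite subcoproduct $\fcoprod_{i\in I_0}\underline{K_i}$. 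I would isolate this as the first step.

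For the forward direction, assume $X$ is quasicompact and apply the definition to the tautological family $\bigl(\upphi\colon\underline{K}\to X\bigr)$ indexed by all pairs $(K,\upphi)$ with $K\in\CHaus_\upkappa$ and $\upphi\in X(K)$. This family is jointly epimorphic because it is already objectwise surjective (every $\upphi\in X(K)$ is hit by its own index), and it is a legitimate (set-indexed) covering as $\CHaus_\upkappa$ is essentially small. Quasicompactness produces finitely many $(K_1,\upphi_1),\dots,(K_n,\upphi_n)$ that are jointly epimorphic. Since a $\upkappa$-condensed set sends finite coproducts to products, the Yoneda embedding carries finite coproducts to coproducts, giving $\underline{K_1}\sqcup\dots\sqcup\underline{K_n}\cong\underline{K_1\sqcup\dots\sqcup K_n}$ with $K_1\sqcup\dots\sqcup K_n\in\CHaus_\upkappa$; the induced map $\underline{K_1\sqcup\dots\sqcup K_n}\onto X$ is the required epimorphism from a representable.

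For the reverse direction I first prove that $\underline{K}$ is quasicompact. Given a covering $\fcoprod_i\underline{K_i}\onto\underline{K}$, the fact that epimorphisms of condensed sets are local surjections means the identity section $\id_K\in\underline{K}(K)$ lifts along some surjection $p\colon K'\onto K$ in $\CHaus_\upkappa$ to a map $\underline{K'}\to\fcoprod_i\underline{K_i}$. By the factorization step this passes through a finite $\fcoprod_{i\in I_0}\underline{K_i}$, so the finite family $(K_i\to K)_{i\in I_0}$ is jointly surjective and $\fcoprod_{i\in I_0}\underline{K_i}\onto\underline{K}$ is epic; this is a finite subcovering. Now, given an epimorphism $e\colon\underline{K}\onto X$ and any covering $\fcoprod_i\underline{K_i}\onto X$, I base-change along $e$: because coproducts are universal and epimorphisms are stable under pullback, $\bigl(\underline{K}\times_X\underline{K_i}\to\underline{K}\bigr)_i$ is jointly epimorphic. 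Covering each fibre product by representables and applying quasicompactness of $\underline{K}$ yields a finite $I_0$ with $\fcoprod_{i\in I_0}\bigl(\underline{K}\times_X\underline{K_i}\bigr)\onto\underline{K}$ epic; composing with $e$, this factors through $\fcoprod_{i\in I_0}\underline{K_i}\to X$, so the latter is epic and gives a finite subcovering of $X$.

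The main obstacle I expect is the sheaf-theoretic bookkeeping in the first and third steps: justifying rigorously the clopen-partition description of $\bigl(\fcoprod_i\underline{K_i}\bigr)(S)$ and the ``epimorphism $=$ local surjection'' characterization of epimorphisms of $\upkappa$-condensed sets, both of which rest on the structure of the coherent topology on $\CHaus_\upkappa$ (covers being finite jointly surjective families, equivalently single surjections). A secondary but genuine point is to verify that all the coproducts formed stay within $\Cond_\upkappa$, i.e.\ that the index sets are honest sets and that the finite disjoint unions $K_1\sqcup\dots\sqcup K_n$ remain $\upkappa$-small; this is where one uses that $\upkappa$ is uncountable. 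Once these foundational facts are in place, both implications are short.
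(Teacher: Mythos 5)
Your proof is correct, and it is self-contained modulo the two standard sheaf-theoretic facts you rightly flag: the clopen-partition description of sections of $\fcoprod_{i}\underline{K_i}$ (which follows because filtered colimits of condensed sets are computed objectwise, the sheaf conditions being finite-limit conditions---a fact the paper itself uses in the proof of \cref{cor:ind-condqs}), and the characterization of epimorphisms of sheaves as local surjections. Note, however, that the paper states this lemma without proof, citing \cite[Dfn 2.5]{ScholzeComplexGeometry}, so the comparison is with the standard argument rather than with anything in the text. Your forward direction (tautological covering by all maps from representables, finite subcovering, then $\underline{K_1}\sqcup\dots\sqcup\underline{K_n}\cong\underline{K_1\sqcup\dots\sqcup K_n}$ via \cref{def:cond}(1)) is that argument verbatim. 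In the reverse direction you take a genuine detour: you first prove that representables are quasicompact and then transfer this along $e\colon\underline{K}\onto X$ by pulling back a given covering, re-covering the fibre products $\underline{K}\times_X\underline{K_i}$ by representables, and invoking universality of coproducts and pullback-stability of epimorphisms in the topos $\Cond_\upkappa$. All of this is valid, but it can be short-circuited: given a covering $\fcoprod_{i\in I}\underline{K_i}\onto X$, this map is a local surjection, so the section $e\in X(K)$ itself lifts along some surjection $p\colon K'\onto K$ in $\CHaus_\upkappa$ to a section of $\fcoprod_{i\in I}\underline{K_i}$ over $K'$; by your own factorization step that section lands in a finite subcoproduct $\fcoprod_{i\in I_0}\underline{K_i}$, and since the composite $\underline{K'}\to\fcoprod_{i\in I_0}\underline{K_i}\to X$ equals $e\circ\underline{p}$, which is epic, the map $\fcoprod_{i\in I_0}\underline{K_i}\to X$ is epic, giving the finite subcovering. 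This one-step argument removes the pullback bookkeeping entirely; what your longer route buys is the intermediate statement that every $\underline{K}$ is quasicompact, which is of independent interest but is just the special case $e=\id$ of the lemma itself.
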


As a word of caution, we note that while a map of condensed sets is monic iff its components are, the corresponding statement for epimorphisms is in general false. In particular, an epimorphism of condensed sets is generally not surjective on the underlying sets. However, it will be epic as a morphism between underlying topological spaces, i.e., have a dense image.

\begin{dfn}[\protect{\cite[Dfn 2.6]{ScholzeComplexGeometry}}]
    A condensed set $X$ is \emph{quasiseparated} if pullbacks along the diagonal inclusion $X\into X\times X$ preserve quasicompactness. Reformulating this, $X$ is quasiseparated iff for arbitrary $K\in\CHaus_{\upkappa}$ any pullback of the following form is quasicompact:
    \[\begin{tikzcd}
        \underline K\times_X \underline K \ar[r, dashed] \ar[d, dashed] \ar[dr, phantom, "\lrcorner" very near start] &
        \underline K \ar[d, "f"] \\
        \underline K \ar[r, swap, "g"] &
        X.
    \end{tikzcd}\]
    We write $\qsCond_{\upkappa}$ for the full subcategory formed by the quasiseparated condensed sets.\hfill\diam{}
\end{dfn}

\begin{lem}[\protect{\cite[Prop 2.8]{ScholzeComplexGeometry}}]\label{lem:qcqs-chaus}
    The subcategory $\catstyle{qcqsCond}_{\upkappa}$ of quasicompact quasiseparated condensed sets is equivalent to the subcategory of the representable condensed sets and hence to the category $\CHaus_{\upkappa}$ of compact spaces. \qed
\end{lem}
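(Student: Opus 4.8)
The plan is to prove the two inclusions of full subcategories separately and then invoke the Yoneda lemma. Concretely, I would show that (a) every representable $\underline K$ is quasicompact and quasiseparated, and (b) conversely every quasicompact quasiseparated condensed set is isomorphic to a representable one. Since both $\catstyle{qcqsCond}_\upkappa$ and the representables sit as \emph{full} subcategories of $\Cond_\upkappa$, (a) and (b) together identify their objects, hence the subcategories themselves; and as $\yo\colon\CHaus_\upkappa\to\Cond_\upkappa$ is fully faithful, the subcategory of representables is equivalent to $\CHaus_\upkappa$. Throughout I would use that $\Cond_\upkappa$ is a Grothendieck topos (\cref{rem:cond-cond}(3)), so that finite limits are computed objectwise, every morphism admits an essentially unique epi--mono (image) factorization, and every epimorphism is the coequalizer of its kernel pair.

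For (a), quasicompactness of $\underline K$ is immediate from \cref{lem:qc-epi}, the identity $\underline K\to\underline K$ being an epimorphism from a representable. For quasiseparatedness I would note that any two maps $\underline K\rightrightarrows\underline M$ correspond by Yoneda to continuous maps $K\rightrightarrows M$, and since limits in $\Cond_\upkappa$ are objectwise and $\yo$ preserves the pullback $K\times_M K$ formed in $\CHaus_\upkappa$, one gets $\underline K\times_{\underline M}\underline K\cong\underline{K\times_M K}$, which is representable and hence quasicompact. Here $\upkappa$ being a strong limit cardinal guarantees $|K\times_M K|\leqslant|K|^2<\upkappa$, so the fibre product stays $\upkappa$-small.

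The substance of the argument is (b), and its crux is the following \emph{key lemma}: a quasicompact subobject $R\hookrightarrow\underline M$ of a representable is again representable, of the form $\underline Z$ for a closed subset $Z\subseteq M$. To prove it I would choose, using \cref{lem:qc-epi}, an epimorphism $\underline N\onto R$ and compose it with the mono $R\hookrightarrow\underline M$ to obtain $g\colon\underline N\to\underline M$; by Yoneda $g$ is a continuous map $\gamma\colon N\to M$, whose image $Z:=\gamma(N)$ is compact, hence closed. Factoring $\gamma$ as $N\onto Z\hookrightarrow M$ and passing to representables yields an epi--mono factorization $\underline N\onto\underline Z\hookrightarrow\underline M$ of $g$ (a surjection of compacta induces an epimorphism of representables by condition \ref{def:cond}(2), and an injection induces a monomorphism); but $\underline N\onto R\hookrightarrow\underline M$ is another such factorization, so uniqueness of images forces $R\cong\underline Z$ as subobjects of $\underline M$. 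Now for a qcqs set $X$ I would pick an epimorphism $q\colon\underline K\onto X$ (quasicompactness, \cref{lem:qc-epi}); its kernel pair $R=\underline K\times_X\underline K$ is quasicompact by quasiseparatedness and embeds into $\underline K\times\underline K\cong\underline{K\times K}$, so the key lemma gives $R\cong\underline Z$ for a closed $Z\subseteq K\times K$. Transporting the internal equivalence-relation structure of the kernel pair along the limit-preserving, fully faithful $\yo$ shows $Z$ is a closed equivalence relation on $K$, whence $L:=K/Z\in\CHaus_\upkappa$ and $K\times_L K=Z$. As every epimorphism in a topos is the coequalizer of its kernel pair, $X\cong\operatorname{Coeq}(\underline Z\rightrightarrows\underline K)$, and condition \ref{def:cond}(2) applied to the quotient map $K\onto L$ identifies exactly this coequalizer with $\underline L$; thus $X\cong\underline L$ is representable.

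The main obstacle is the key lemma together with the final coequalizer identification. The key lemma rests on having a well-behaved image factorization in the topos $\Cond_\upkappa$ and on $\yo$ sending surjections to epimorphisms and injections to monomorphisms of compacta; the coequalizer step is precisely where the sheaf condition \ref{def:cond}(2) does the essential work, recognizing the abstract coequalizer of a closed equivalence relation as the representable attached to the topological quotient. A secondary point requiring care is the bookkeeping of cardinalities\,---\,that $K\times K$, $Z$, and $K/Z$ all remain $\upkappa$-small\,---\,which is ensured throughout by $\upkappa$ being a strong limit cardinal.
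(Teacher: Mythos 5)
Your proof is correct, but note first what you are being compared against: the paper does not prove this lemma at all --- it states it with a \qed{} as a quotation of \cite[Prop 2.8]{ScholzeComplexGeometry}, so there is no internal proof to match. Judged on its own merits, your argument is sound and is essentially a faithful reconstruction of the standard Clausen--Scholze argument: representables are qcqs because $\yo$ preserves the fibre products $K\times_M K$ that exist in $\CHaus_\upkappa$; conversely, for qcqs $X$ one takes an epimorphism $\underline K\onto X$ (\cref{lem:qc-epi}), uses quasiseparatedness to see the kernel pair is quasicompact, identifies it as $\underline Z$ for a closed equivalence relation $Z\subseteq K\times K$ via the image factorization in the topos $\Cond_\upkappa$, and then uses the sheaf condition \ref{def:cond}(2) for $K\onto K/Z$ to recognize $\operatorname{Coeq}(\underline Z\rightrightarrows \underline K)\cong \underline{K/Z}$, while exactness of the topos gives $X\cong\operatorname{Coeq}(\underline Z\rightrightarrows\underline K)$. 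All the load-bearing steps check out: your key lemma is correct, and your appeal to \ref{def:cond}(2) for ``surjections of compacta give epimorphisms'' is legitimate, since that condition says exactly that $Y(Z)\to Y(N)$ is injective for every condensed $Y$, which is right-cancellability of $\underline N\to\underline Z$; it would be worth spelling out this one-line verification, as written it reads as a citation of the wrong-direction statement. The classical topological inputs (the image of a compactum is closed, the quotient of a compact Hausdorff space by a closed equivalence relation is compact Hausdorff and has that relation as its kernel pair) and the cardinality bookkeeping under a strong limit cardinal are all used correctly. In short: the paper treats this as a black box, and you have supplied a complete and correct proof of the black box, along the same lines as the cited source.
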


\begin{lem}[\protect{\cite[Lem 4.14]{ScholzeAnalyticGeometry}}]\label{lem:qs-refl} The inclusion $\qsCond_{\upkappa}\into\Cond_{\upkappa}$ admits a finite product preserving left adjoint, exhibiting the former as a reflective subcategory. \qed
\end{lem}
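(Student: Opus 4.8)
The plan is to realize $\qsCond_\upkappa$ as a reflective subcategory of the topos $\Cond_\upkappa$ and then to deduce finite-product preservation of the reflector from an exponential-ideal property. I would proceed in three steps.

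First I would show that $\qsCond_\upkappa$ is closed under all small limits formed in $\Cond_\upkappa$. For a monomorphism $Z\into X$ one checks that the comparison $\underline K\times_Z\underline K\to\underline K\times_X\underline K$ is an isomorphism of subobjects of $\underline{K\times K}$ (two elements of $Z$ agree in $X$ if and only if they agree in $Z$), so a subobject of a quasiseparated object is again quasiseparated. For a product $\prod_i X_i$ and a map $\underline{K\times K}\to(\prod_i X_i)^2$, the pullback $\underline K\times_{\prod_i X_i}\underline K$ is the intersection $\fcap_{i}(\underline K\times_{X_i}\underline K)$ inside $\underline{K\times K}$; by \cref{lem:qcqs-chaus} each factor is a closed subspace of $K\times K$, and an arbitrary intersection of closed subspaces is closed, hence quasicompact. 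Equalizers are subobjects, so these observations give closure under all limits.

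Second, since $\Cond_\upkappa$ is a Grothendieck topos (cf.\ \cref{rem:cond-cond}(3)) it is locally presentable, and a full subcategory closed under limits satisfying the solution-set condition is reflective by the general adjoint functor theorem. The solution set is supplied by the $\upkappa$-bound: every condensed set is a colimit of representables, and any map from $X$ to a quasiseparated target factors through a quasiseparated quotient generated by $\upkappa$-small data, so the candidates form a set. This produces the reflector $L\colon\Cond_\upkappa\to\qsCond_\upkappa$.

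The main obstacle is finite-product preservation, which I would obtain via Day's reflection theorem: in a cartesian closed category a reflective subcategory has a finite-product-preserving reflector if and only if it is an exponential ideal. As $\Cond_\upkappa$ is cartesian closed, it suffices to show that $S^X$ is quasiseparated whenever $S$ is. Writing $X=\colim_i\underline{K_i}$ as a colimit of representables gives $S^X\cong\lim_i S^{\underline{K_i}}$, so by the first step I may assume $X=\underline K$, where $S^{\underline K}\cong S(-\times K)$. The key input is that a quasiseparated $S$ embeds into a topological condensed set: the unit $S\to\underline{\topuly S}$ is a monomorphism, because for $\upphi,\uppsi\in S(K)$ with $\upphi_*=\uppsi_*$ the equalizer $\mathrm{Eq}(\upphi,\uppsi)$ is the pullback of the diagonal $\Delta_S$ along $(\upphi,\uppsi)\colon\underline K\to S\times S$, hence quasicompact, hence a closed $\underline F\subseteq\underline K$ containing every point, forcing $F=K$ and $\upphi=\uppsi$. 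The same pullback computation shows that $\topuly S$ is weak Hausdorff, since the agreement loci $\{(k,k')\mid\upphi_*(k)=\uppsi_*(k')\}$ arise as quasicompact pullbacks and are therefore closed in $K\times K'$. Then $S^{\underline K}\into(\underline{\topuly S})^{\underline K}\cong\underline{\mathrm{Map}(K,\topuly S)}$; the mapping space is again a cgwh-space because $\CGWH$ is cartesian closed, and $\underline W$ is quasiseparated for every cgwh-space $W$ (its agreement loci are closed in the compactly generated square). Thus $S^{\underline K}$ is a subobject of a quasiseparated object and hence quasiseparated, establishing the exponential-ideal property and completing the argument. The delicate points are the monomorphism $S\into\underline{\topuly S}$ and the weak Hausdorffness of $\topuly S$, both of which rest squarely on the definition of quasiseparatedness through pullbacks of the diagonal.
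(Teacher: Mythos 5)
Your proposal addresses a statement for which the paper records no proof at all: \cref{lem:qs-refl} is imported verbatim from \cite[Lem 4.14]{ScholzeAnalyticGeometry}, with the reference standing in lieu of an argument. Your route is therefore necessarily different, and it is an attractive, genuinely categorical one: closure of $\qsCond_\upkappa$ under limits together with the general adjoint functor theorem (solution sets via epi-mono factorization through the image, which is a quasiseparated quotient, plus co-well-poweredness of the Grothendieck topos $\Cond_\upkappa$) produces the reflector, and Day's reflection theorem reduces finite-product preservation to the exponential-ideal property. The limit-closure arguments are correct, including the key identification of $\underline K\times_{X_i}\underline K$ as quasicompact and quasiseparated, hence representable by \cref{lem:qcqs-chaus}, hence a closed subspace of $K\times K$, so that the pullback over a product becomes an intersection of closed subspaces. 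The reduction of $S^X$ to $S^{\underline K}\cong S(-\times K)$ is correct, your proof that $S\to\underline{\topuly S}$ is a monomorphism is a clean re-derivation of \cref{lem:qs-ff} for representable sources, and the identification $(\underline{\topuly S})^{\underline K}\cong\underline{\mathrm{Map}(K,\topuly S)}$ together with quasiseparatedness of $\underline W$ for cgwh $W$ closes the argument\,---\,provided $\topuly S$ really is a cgwh-space.

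That last point is the one genuine gap: your justification of weak Hausdorffness of $\topuly S$. Closedness of the agreement loci $\{(k,k')\mid \upphi_*(k)=\uppsi_*(k')\}$ for $\upphi\in S(K)$, $\uppsi\in S(K')$\,---\,which your pullback argument does establish\,---\,only controls probes coming from elements of $S$. Weak Hausdorffness quantifies over \emph{all} continuous maps $f\colon T\to\topuly S$ from compacta, and since the underlying-space functor is not full (cf.\ \cref{rem:eval}(2)), such an $f$ need not equal $\upphi_*$ for any $\upphi\in S(T)$; so closedness of $f(T)$ does not follow from your observation alone. What is missing is the intermediate structure: the images $\ran(\upphi_*)$, each given the quotient topology from $K$ (Hausdorff precisely because the self-agreement locus is a closed equivalence relation), form an upward-filtered, coherent system of compact Hausdorff subspaces, i.e.\ a compactology\,---\,this is \cref{lem:uly-cmp}\,---\,and the final topology induced by such a system is weak Hausdorff, which is step \textcircled{1} of the proof of \cref{CGWH-COMPL-ADJ}: each $C=\ran(\uppsi_*)$ is closed with the subspace topology, so $f(T)\cap C=f(f^{-1}(C))$ is a quasicompact subset of the Hausdorff space $C$ and hence closed. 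Alternatively, you may simply cite \cref{thm:qs-hausd} (Scholze's Thm 2.16), which is exactly the statement you need, is stated in the paper independently of \cref{lem:qs-refl}, and creates no circularity. With that single repair your proof is complete; as written, the weak Hausdorff step is asserted rather than proved.
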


By design, quasiseparated condensed sets are the appropriate condensed analogy to weak Hausdorff spaces. Concretely, we have the following result.

\begin{thm}[\protect{\cite[Thm 2.16]{ScholzeCondensedMath}}]\label{thm:qs-hausd}
    A compactly generated topological space $T$ is weak Hausdorff if and only if the condensed set $\underline T$ is quasiseparated. Conversely, the underlying topological space $\topuly X$ of any quasiseparated condensed set $X$ is a cgwh-space. \qed
\end{thm}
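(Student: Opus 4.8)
The plan is to prove both directions of the first equivalence and then deduce the statement about arbitrary quasiseparated condensed sets. I would first unwind the definitions: for a compactly generated space $T$, the condensed set $\underline T = \Top(\upiota(-), T)$ assigns to each $\upkappa$-small compact $K$ the set of continuous maps $K \to T$. The key observation is that $\underline T$ is quasiseparated precisely when, for every pair of continuous maps $g, f\colon K \to T$ out of a compact $K$, the pullback $\underline K \times_{\underline T} \underline K$ is quasicompact. Since the components of this pullback are computed levelwise, $(\underline K \times_{\underline T} \underline K)(S) = \{(a,b) \in \Top(S,K)^2 \mid f \circ a = g \circ b\}$, and by \cref{lem:qc-epi} quasicompactness amounts to this condensed set receiving an epimorphism from a representable.

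For the forward direction I would argue that $\underline T$ quasiseparated forces $T$ weak Hausdorff. The natural candidate is to take $K$ compact together with a single continuous map $f\colon K \to T$ and examine the fiber product $\underline K \times_{\underline T} \underline K$ with $g = f$; its underlying set at the point $*$ is $\{(x,y)\in K\times K \mid f(x) = f(y)\}$, the equivalence relation $R_f \subseteq K \times K$. Weak Hausdorffness of $T$ is exactly equivalent (for compactly generated $T$) to every such $R_f$ being closed in $K \times K$ for all compact $K$ and continuous $f$, see the characterization in \cite[Prop 2.14 and Cor 2.21]{CGWH}. The heart of the matter is to show that the condensed pullback $\underline K \times_{\underml T} \underline K$ being quasicompact, i.e.\ representable by a compactum via \cref{lem:qcqs-chaus}, translates precisely into $R_f$ being a closed (hence compact) subspace of $K\times K$. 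I would verify that this pullback, which is automatically quasiseparated as a subobject, is quasicompact iff it is representable, and that its representing compactum must be $R_f$ with the subspace topology.

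For the reverse direction, assuming $T$ is weak Hausdorff and compactly generated, I would show each pullback $\underline K \times_{\underline T} \underline K$ is quasicompact. Here I would identify this pullback with $\underline{R}$ where $R = \{(x,y)\in K \times K \mid f(x) = g(y)\}$; since $T$ is weak Hausdorff, $R$ is a closed subspace of the compact space $K \times K$ (the graphs of $f$ and $g$ agree on a closed set because the diagonal of $T$ pulls back to something closed), hence $R$ is itself compact and $\upkappa$-small, so $\underline R$ is representable and therefore quasicompact by \cref{lem:qc-epi}. The converse statement — that $\topuly X$ is always cgwh for quasiseparated $X$ — I would obtain by noting $\topuly X$ is compactly generated by construction (\cref{def:uly}), and then applying the already-proved equivalence together with the counit/unit relating $X$ and $\underline{\topuly X}$ on representables.

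\textbf{The main obstacle} I anticipate is the careful identification of the condensed pullback $\underline K \times_{\underline T} \underline K$ with the representable $\underline R$ on the nose, including checking that the topology on $R$ inherited as a subspace of $K \times K$ is the correct one matching the condensed structure; this requires knowing that finite products of representables are computed as expected and that closed subobjects of representables are again representable. The set-theoretic bookkeeping with $\upkappa$-smallness is routine since closed subspaces of $\upkappa$-small compacta are again $\upkappa$-small, but the translation between "quasicompact condensed set" and "closed subspace of a compactum" via Lemmas \ref{lem:qc-epi} and \ref{lem:qcqs-chaus} is where the real content lies.
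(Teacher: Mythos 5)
First, a structural remark: the paper does not prove this theorem at all --- it is imported as a black box from \cite[Thm 2.16]{ScholzeCondensedMath} (hence the \verb|\qed| attached to the statement), and the surrounding machinery (\cref{lem:uly-cmp}, \cref{thm:cmp-condqs}) actually \emph{uses} it as an input. So your attempt fills a gap the paper deliberately leaves open, and must be judged on its own. The first half of your proposal is essentially correct: the levelwise identification $(\underline K\times_{\underline T}\underline K)(S)=\Top(S,R)$ with $R=\{(x,y)\in K\times K\mid f(x)=g(y)\}$ in the subspace topology is right; for $T$ weak Hausdorff the relation $R$ is closed, hence a $\upkappa$-small compactum, so the pullback is representable and quasicompact by \cref{lem:qc-epi}; conversely, quasiseparatedness plus your subobject observation and \cref{lem:qcqs-chaus} give $\underline K\times_{\underline T}\underline K\cong\underline L$ for some compactum $L$, and evaluating at $*$ yields a continuous bijection $L\to R$ onto a Hausdorff space (a subspace of $K\times K$), hence a homeomorphism, so $R$ is closed; combined with the standard characterization of weak Hausdorffness of a compactly generated space by closedness of these relations, both directions go through. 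Two details you should make explicit: representables are quasiseparated must be proved directly (via $\underline K\times_{\underline L}\underline K\cong\underline{K\times_L K}$) and not quoted from \cref{lem:qs-ff}, whose proof in the paper cites the very theorem you are proving; and the identification of the representing compactum with $R$ is exactly the compact-to-Hausdorff bijection argument above, which deserves a line.

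The second half, however, has a genuine gap. Your plan --- ``$\topuly X$ is compactly generated by construction, then apply the already-proved equivalence together with the unit/counit relating $X$ and $\underline{\topuly X}$'' --- would need $\underline{\topuly X}$ to be quasiseparated in order to invoke the first half, and this is not formal. Quasiseparatedness of $X$ only controls pairs of \emph{sections} $\upphi,\uppsi\in X(K)$, whereas weak Hausdorffness of the final topology quantifies over \emph{all} continuous maps $g\colon S\to\topuly X$ from compacta, and such a map need not lift to a section of $X$ (final topologies do not force factorization through the defining family; cf.\ \cref{EX-1}). The unit $X\to\underline{\topuly X}$ is a monomorphism for quasiseparated $X$, but quasiseparatedness descends to subobjects, i.e.\ it transfers in the wrong direction here. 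What is actually needed is the coherence argument: from closedness of all pairwise relations $R_{\upphi,\uppsi}\subseteq K\times K'$ one shows that for any $A\subseteq\ran(\upphi_*)$ with $\upphi_*^{-1}(A)$ closed, the set $\uppsi_*^{-1}(A)=\uppi_2\bigl(R_{\upphi,\uppsi}\cap(\upphi_*^{-1}(A)\times K')\bigr)$ is the projection of a compactum and hence closed; therefore each $\ran(\upphi_*)$ is closed in $\topuly X$ and carries the (compact Hausdorff) quotient topology of $K$, and then for arbitrary continuous $g\colon S\to\topuly X$ the set $g(S)\cap\ran(\upphi_*)=g(g^{-1}(\ran\upphi_*))$ is compact in a Hausdorff space, so $\upphi_*^{-1}(g(S))$ is closed and $g(S)$ is closed. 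This is precisely the compactological coherence the paper establishes in \cref{lem:uly-cmp} and in step \textcircled{1} of the proof of \cref{CGWH-COMPL-ADJ}; without it (or Clausen--Scholze's own argument) the second sentence of the theorem remains unproven.
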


Complementing this with the following result, which was mentioned in \cite[p.~9]{ScholzeAnalyticGeometry}, we obtain that quasiseparated condensed sets are both `weak Hausdorff' and such that assigning $\upphi\in X(K)$ to $\upphi_*: K\to X(*)$ as in \cref{rem:eval} is an injective map. This latter property can be understood as a non-degeneracy condition, saying that a quasiseparated condensed set is `just' a set with some structure; see also the explanations in \cite[p.~5803--5805]{BaezHoff} on `concrete sheaves'.

\begin{lem}\label{lem:qs-ff}
    Let $Y\in\qsCond_{\upkappa}$ be a quasiseparated condensed set. Then the forgetful map $U\colon \Cond_{\upkappa}(X, Y) \to \Set(U(X), U(Y))$ is injective for any $X\in\Cond_{\upkappa}$. Hence, any such $Y$ is a concrete sheaf. In particular this holds for all representable $Y\in\Cond_{\upkappa}$.
\end{lem}

\begin{proof} 
    First notice that if $Y$ is representable, then it is quasiseparated by \cref{thm:qs-hausd}. Let now $\upeta, \upxi\colon X\to Y$ be a morphism in $\Cond_{\upkappa}$, such that $U(\upeta)=U(\upxi)$. Choose some $\upphi \in X(K)$ for arbitrary $K\in \CHaus_{\upkappa}$. We will show that $\upeta_K(\upphi)=\upxi_K(\upphi)$, regardless of $\upphi$, and hence $\upeta = \upxi$. By the Yoneda lemma $\upeta_K(\upphi)\equiv \upeta\circ\upphi\colon \underline K \to Y$ and $\upxi_K(\upphi)\equiv\upxi\circ\upphi\colon \underline K\to Y$ are morphisms of condensed sets with representable domain, identified under $U$. As $Y$ is quasiseparated, the pullback $\underline K\times_Y \underline K =: Z$ of the following diagram is quasicompact:
    \[\begin{tikzcd}
        Z   \ar[r, dashed, "\uppi_1"]
            \ar[d, dashed, "\uppi_2"'] 
            \ar[dr, phantom, "\lrcorner" very near start]&
        \underline K \ar[d, "\upeta\circ\upphi"] \\
        \underline K \ar[r, "\upxi\circ\upphi"'] &
        Y.
    \end{tikzcd}\]
    According to \cref{lem:qc-epi}, there exists an epimorphism $\underline L \onto Z$. 
    Using pointwise computation, we have that $Z(*) = K\times_{Y(*)} K$, which contains the diagonal $K\into Z(\ast)$ by $U(\upeta) = U(\upxi)$. This implies surjectivity of $U(\uppi_1)$ and $U(\uppi_2)$ for the canonical projections $\uppi_1, \uppi_2: Z \to \underline K$. As furthermore $\topuly-$ preserves colimits and hence epimorphisms, $\uppsi\colon L\onto \topuly Z \onto K$ is an epimorphism of topological spaces, equalizing $\upeta\circ\upphi$ and $\upxi\circ\upphi$ by construction of the pullback. From this, we may use \cref{def:cond}(2), which implies that $X(\uppsi)$ and $Y(\uppsi)$ are injective. Using the Yoneda lemma and naturality of $\upeta$, $\upxi$, $\upphi$ and $\uppsi$, we obtain the following diagram:
    \[\begin{tikzcd}[row sep=large]
        & \id_L \in \underline L(L)
            \ar[d, mapsto, "\uppsi_L"'] \\
            % \ar[r, equals] &
        % \underline L(L) \ni \id_L
            % \ar[d, mapsto, "\uppsi_L"] & \\
        \id_K\in\underline K(K)
            \ar[r, mapsto, "\underline K(\uppsi)"]
            \ar[d, mapsto, "\upphi_K"'] &
        \uppsi\in\underline K(L)
            \ar[d, "\upphi_L"'] &
            % \ar[r, equals] &
        % \underline K(L) \ni \uppsi
            % \ar[d, mapsto, "\upphi_L"] &
        \underline K(K) \ni \id_K
            \ar[l, mapsto, "\underline K(\uppsi)"']
            \ar[d, mapsto, "\upphi_K"] \\
        \upphi\in X(K)
            \ar[r, hook, "X(\uppsi)"]
            \ar[d, mapsto, "\upeta_K"'] &
        X(L)
            % \ar[r, equals]
            \ar[d, shift right, "\upeta_L"'] % &
        % X(L)
            \ar[d, shift left, "\upxi_L"] &
        X(K) \ni \upphi
            \ar[l, hook', "X(\uppsi)"']
            \ar[d, mapsto, "\upxi_K"] \\
        \upeta_K(\upphi) \in Y(K)
            \ar[r, hook, "Y(\uppsi)"] &
        % Y(L)
            % \ar[r, equals] &
        Y(L) &
        Y(K) \ni \upxi_K(\upphi).
            \ar[l, hook', "Y(\uppsi)"']
    \end{tikzcd}\]
    By chasing $\upphi$ through this diagram we conclude
    \begin{align*}
        Y(\uppsi)(\upeta_K(\upphi)) &= \upeta_L(X(\uppsi)(\upphi)) = \upeta_L(\upphi_L(\uppsi)) = (\upeta\circ\upphi\circ\uppsi)_L(\id_L)\\
        &= (\upxi\circ\upphi\circ\uppsi)_L(\id_L) = \upxi_L(\upphi_L(\uppsi)) = \upxi_L(X(\uppsi)(\upphi)) = Y(\uppsi)(\xi_L(\upphi))
    \end{align*}
    and by injectivity of $Y(\uppsi)$ we get $\upeta_K(\upphi) = \upxi_K(\upphi)$ as desired.
\end{proof}

\begin{lem}\label{lem:rest-yon} Let $\upiota\colon\CHaus_{\upkappa}\hookrightarrow\Cmp_{\upkappa}$, $(K,\uptau)\mapsto(K,\Pow(K))$, be the embedding from \cref{CGWH-COMPL-ADJ}. The restricted Yoneda embedding
    \begin{align*}
        \yo: \Cmp_\upkappa &\to \Fun(\CHaus^{\op}_\upkappa, \Set) \\
            C &\mapsto \Cmp_{\upkappa}(\upiota(-), C)
    \end{align*}
    defines a fully faithful functor from the category of $\upkappa$-compactological sets into the category of $\upkappa$-condensed sets.
\end{lem}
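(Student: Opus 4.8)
The plan is to establish two separate claims: that $\yo(C)$ is a genuine $\upkappa$-condensed set for each $C\in\Cmp_\upkappa$ (so that $\yo$ factors through $\Cond_\upkappa$), and that $\yo$ is fully faithful. Well-definedness I would get from the fact that $\upiota\colon\CHaus_\upkappa\into\Cmp_\upkappa$ is finitely cocontinuous (\cref{prop:chaus-incl-cocont}), combined with the observation that in $\CHaus_\upkappa$ every surjection is the coequalizer of its kernel pair. Full faithfulness is, in spirit, the density of $\upiota$, and I would extract it from the colimit presentation $C\cong\colim_{L\in\Comp}\upiota(L)$ of \cref{prop:cmp-gen-compact}.

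For well-definedness, fix $C=(X,\Comp)$ and verify \ref{def:cond}(1) and (2) for $\yo(C)=\Cmp_\upkappa(\upiota(-),C)$. Condition (1) is immediate: $\upiota$ preserves finite coproducts, so $\upiota(K_1\sqcup K_2)\cong\upiota(K_1)\sqcup\upiota(K_2)$, and the contravariant hom $\Cmp_\upkappa(-,C)$ converts this coproduct into the product $\yo(C)(K_1)\times\yo(C)(K_2)$. For (2), take a surjection $f\colon K\onto L$ in $\CHaus_\upkappa$ with kernel pair $\uppi_1,\uppi_2\colon K\times_L K\equto K$; here $K\times_L K$ is a closed subspace of $K\times K$, hence compact Hausdorff and $\upkappa$-small. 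Since a continuous surjection of compact Hausdorff spaces is closed, hence a quotient map, and the relation $K\times_L K$ is closed, $L$ carries the corresponding quotient topology and is the coequalizer of $\uppi_1,\uppi_2$ in $\CHaus_\upkappa$. Applying the finitely cocontinuous $\upiota$ yields $\upiota(L)\cong\operatorname{Coeq}(\upiota\uppi_1,\upiota\uppi_2)$ in $\Cmp_\upkappa$, and feeding this coequalizer into $\Cmp_\upkappa(-,C)$ produces precisely the equalizer required by \ref{def:cond}(2). Hence $\yo(C)\in\Cond_\upkappa$.

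For full faithfulness I would run both Hom-sets through the presentation $C\cong\colim_{L\in\Comp}\upiota(L)$, which is filtered with embeddings as transition maps (\cref{prop:cmp-gen-compact}). Since $\Cmp_\upkappa(-,D)$ turns colimits into limits,
\[
    \Cmp_\upkappa(C,D)\;\cong\;\lim_{L\in\Comp}\Cmp_\upkappa(\upiota(L),D).
\]
For the other side, \cref{lem:cmp-emb-comp} lets me commute the hom out of a compactum with this filtered colimit, giving $\yo(C)(K)=\Cmp_\upkappa(\upiota(K),C)\cong\colim_{L}\Cmp_\upkappa(\upiota(K),\upiota(L))$ for every $K$; as colimits in presheaf categories are computed pointwise, this means $\yo(C)\cong\colim_{L}\yo(\upiota(L))$ in $\Fun(\CHaus_\upkappa^{\op},\Set)$. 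Full faithfulness of $\upiota$ (\cref{CGWH-COMPL-ADJ}) identifies $\yo(\upiota(L))=\Cmp_\upkappa(\upiota(-),\upiota(L))\cong\CHaus_\upkappa(-,L)=\underline L$, so $\yo(C)\cong\colim_{L}\underline L$ is canonically a filtered colimit of representables. Consequently
\[
    \Cond_\upkappa\bigl(\yo(C),\yo(D)\bigr)=\Nat\bigl(\yo(C),\yo(D)\bigr)\;\cong\;\lim_{L}\Nat\bigl(\underline L,\yo(D)\bigr)\;\cong\;\lim_{L}\yo(D)(L)\;=\;\lim_{L}\Cmp_\upkappa(\upiota(L),D),
\]
where the penultimate step is the Yoneda lemma. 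The two computations land in the same limit, which delivers the bijection $\Cmp_\upkappa(C,D)\cong\Cond_\upkappa(\yo(C),\yo(D))$.

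The step I expect to be the real obstacle is the final bookkeeping: the displays above identify both Hom-sets abstractly with $\lim_{L}\Cmp_\upkappa(\upiota(L),D)$, but full faithfulness demands that this identification be exactly the map $\upeta\mapsto\yo(\upeta)$, not merely some isomorphism of limits. Making this precise means tracing, for each $L\in\Comp$ and the colimit-cone morphism $\upiota(L)\to C$, how a natural transformation $\yo(C)\to\yo(D)$ restricts along the induced $\underline L\cong\yo(\upiota(L))\to\yo(C)$ and matching the result with the value at $L$ of a morphism $C\to D$; this is the standard naturality verification behind every density argument, and the essential input that makes it go through is \cref{lem:cmp-emb-comp}, ensuring that evaluation of $\yo(C)$ really does commute with the defining colimit of $C$.
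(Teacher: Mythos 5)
Your proposal is correct and follows essentially the same route as the paper's proof: the sheaf conditions come from finite cocontinuity of $\upiota$ (\cref{prop:chaus-incl-cocont}) combined with the hom-functor $\Cmp_\upkappa(-,C)$ turning colimits into limits, and full faithfulness comes from the colimit presentation $C\cong\colim_{L\in\Comp}\upiota(L)$ of \cref{prop:cmp-gen-compact}. The differences are only matters of detail: you unpack \cref{rem:cond-cond}(2) by verifying directly that surjections in $\CHaus_\upkappa$ are coequalizers of their kernel pairs, and you make the density argument explicit via \cref{lem:cmp-emb-comp}, both of which the paper leaves implicit.
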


\begin{proof}
    Both properties follow since every compactological set is a colimit of compact spaces according to \cref{prop:cmp-gen-compact}, as this implies that all morphisms $\yo(C_1)\to\yo(C_2)$ are fully determined by their restrictions to the subcategory described by $\upiota$.
    %
    %Fully faithfulness of the restricted Yoneda embedding follows from the fact that $\upkappa$-small compact spaces generate the category of $\upkappa$-compactological sets, according to \cref{prop:cmp-gen-compact}.
    To see that the functor maps to the subcategory of condensed sets, observe that \ref{def:cond}(1) and, using \cref{rem:cond-cond}(2), \ref{def:cond}(2) both hold if $\yo(C)\colon \CHaus^{\op}_\upkappa\rightarrow\Set$ preserves finite limits. We know that the inclusion $\upiota\colon\CHaus_\upkappa\to\Cmp_\upkappa$ preserves finite colimits due to \cref{prop:chaus-incl-cocont}. Since the Hom-functor $\Cmp_{\upkappa}(-,C)\colon\Cmp_{\upkappa}^{\op}\rightarrow\Set$ preserves limits, the conclusion follows. %it follows that all finite limits of $\CHaus^{\op}_\upkappa$ get mapped to finite limits of $\Cond_\upkappa$.
\end{proof}

\begin{lem}\label{lem:uly-cmp}
Let $X\in\qsCond_\upkappa$ be a quasiseparated condensed set. Then 
\[
\Comp_{X} := \{ \ran(\upphi_*)\subseteq X(*) \mid \upphi \in X(K),\, K\in\CHaus_\upkappa \}
\]
defines a compactology on $X(*)$ and $\cmpuly-\colon \qsCond_\upkappa\to\Cmp_\upkappa$, $X\mapsto(X(*),\Comp_X)$, induces a functor assigning to $X$ its \emph{underlying compactological set}.
\end{lem}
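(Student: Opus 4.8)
The plan is to verify, in order, the three claims packaged into this lemma: first that $\Comp_X$ is a well-defined compactology on $X(*)$; second that $\cmpuly-$ is well-defined on objects (and morphisms); and third that it is functorial. For the first and most substantial part I would lean on \cref{LEM-SHORTCUT}, checking only conditions \ref{DFN-COMP-4}(1)--\ref{DFN-COMP-4}(4) together with compactness of each member, which by \cref{LEM-SHORTCUT}(4) suffices for the two remaining separation/finiteness conditions.

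First I would establish that each $K_\upphi := \ran(\upphi_*)$, for $\upphi\in X(K)$, is genuinely compact in its natural topology. Since $\upphi_*\colon K\to X(*)$ has compact domain, its image is the continuous image of a compact space; the key point, which invokes quasiseparatedness via \cref{lem:qs-ff}, is that $\upphi_*$ is injective enough to transport the compact topology, so that $K_\upphi$ carries a genuine compact Hausdorff topology as a quotient of $K$ identified with a subset of $X(*)$. Concretely, quasiseparatedness ensures that the equivalence relation ``$\upphi_*(k)=\upphi_*(k')$'' is closed, so $K_\upphi\cong K/{\sim}$ is compact Hausdorff. Conditions \ref{DFN-COMP-4}(1) (the empty set arises from $\upphi\in X(\varnothing)$), \ref{DFN-COMP-4}(2) (covering, since every point $x\in X(*)$ lies in the range of some $x_*$ by \cref{rem:eval}(3)), and \ref{DFN-COMP-4}(3) (stability under finite unions, realized by gluing maps out of finite coproducts using \ref{def:cond}(1)) are then largely bookkeeping.

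\textbf{The main obstacle} I anticipate is condition \ref{DFN-COMP-4}(4), stability under arbitrary nonempty intersections. The natural strategy is to realize an intersection $\bigcap_i K_{\upphi_i}$ as a closed subset of some single $K_{\upphi_{i_0}}$ and then use that closed subsets of a representable image are again of the required form. For this I would argue that, fixing $i_0$, each $K_{\upphi_i}\cap K_{\upphi_{i_0}}$ is closed in $K_{\upphi_{i_0}}$; this should follow from the fact that the $K_{\upphi_i}$ are compact Hausdorff subspaces of the associated topological space $\topuly X$, which is cgwh by \cref{thm:qs-hausd}, so that pairwise intersections are closed. Pulling back along the quotient map $K\to K_{\upphi_{i_0}}$, a closed subset corresponds to a closed subset of $K$, hence to a representable image via the corresponding restriction $X(\text{closed}\hookrightarrow K)$, giving an element of $\Comp_X$. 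The delicate part is ensuring this closedness argument is uniform over the whole (possibly large) index set and does not secretly require more than quasiseparatedness provides.

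Finally, for functoriality, given $f\colon X\to Y$ in $\qsCond_\upkappa$ I would set $\cmpuly f := U(f)=f_*\colon X(*)\to Y(*)$ and check it is a morphism of compactological sets. For $K_\upphi\in\Comp_X$ with $\upphi\in X(K)$, naturality of $f$ gives $f_*(K_\upphi)=\ran((f_K(\upphi))_*)\in\Comp_Y$, so $f_*$ maps the compactology into the target compactology; continuity on each $K_\upphi$ follows since $f_*\circ\upphi_* = (f_K\upphi)_*$ is the defining continuous map of the image. Identity and composition are preserved because everything is computed on underlying sets and $U$ is a functor. I would expect this last paragraph to be routine once the compactology itself is in place, so the weight of the proof rests squarely on verifying \ref{DFN-COMP-4}(4).
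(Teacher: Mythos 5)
Your overall decomposition (compactology first, then well-definedness on objects, then functoriality) matches the paper's, and your treatment of conditions \ref{DFN-COMP-4}(1)--(3) is the same: empty set from $X(\varnothing)$, covering from \cref{rem:eval}(3), finite unions via finite coproducts and \ref{def:cond}(1). Where you genuinely diverge is the key condition \ref{DFN-COMP-4}(4). The paper first \emph{normalizes}: given $K=\ran(\upphi_*)$ with $\upphi\in X(K')$, it uses the sheaf condition \ref{def:cond}(2) together with \cref{lem:qs-ff} to descend $\upphi$ along the surjection $K'\onto K$ to some $\uppsi\in X(K)$ whose underlying map $\uppsi_*$ is the inclusion $K\hookrightarrow X(*)$; closed subsets of $K$ are then obtained by restricting $\uppsi$. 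You bypass descent entirely: a closed $L\subseteq K$ is pulled back to the closed subset $L':=\upphi_*^{-1}(L)\subseteq K'$, and $X(L'\hookrightarrow K')(\upphi)$ has range exactly $L$ by naturality. This is correct and arguably more elementary: your route needs quasiseparatedness only through \cref{thm:qs-hausd} (weak Hausdorffness of $\topuly{X}$, so that members of $\Comp_X$ are compact and closed), and handles arbitrary index sets with no extra care since intersections of closed sets are closed. What the paper's extra work buys is the normalization itself, which is reused right after the lemma (the remark that the topology induced by $\Comp_X$ equals that of $\topuly{X}$) and again in the proof of \cref{thm:cmp-condqs}; with your argument that normalization would have to be supplied separately there. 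Note also that your citation of \cref{lem:qs-ff} for compactness of $K_\upphi$ is not the right tool: closedness of the relation $\upphi_*(k)=\upphi_*(k')$ and compactness of the image are exactly weak Hausdorffness of $\topuly{X}$ (\cref{thm:qs-hausd} plus the remark after \cref{CGWH-DFN}); \cref{lem:qs-ff} concerns injectivity of the forgetful map on Hom-sets and plays no role in your version.

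There is one genuine, though fixable, gap: \cref{LEM-SHORTCUT}(4) requires each member $K$ to be compact with respect to the \emph{intrinsic} topology $\uptau_K=\{K\setminus K'\mid K'\in\Comp_X\}$, whereas you verify compactness of $K_\upphi$ in its natural topology (quotient of $K$, equivalently subspace of $\topuly{X}$). These topologies coincide, but that requires an argument, and it is precisely here that the paper spends its paragraph before functoriality: by \cref{LEM-SHORTCUT}(1) the $\uptau_K$-closed sets are the members of $\Comp_X$ contained in $K$; these are closed in $\topuly{X}$ by weak Hausdorffness, and conversely any subspace-closed $F\subseteq K$ is closed in $\topuly{X}$ (as $K$ itself is closed) and hence a member by your own closed-subset argument. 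So the identification follows from ingredients you already have, but it must be made explicit before \cref{LEM-SHORTCUT}(4) can be invoked; the same identification is also tacitly used in your functoriality paragraph, where continuity of $f_*$ on members is checked against the natural topologies rather than the $\uptau_K$.
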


\begin{proof} We first show that $\Comp_{X}$ is indeed a compactology by verifying \ref{DFN-COMP-4}(1)--\ref{DFN-COMP-4}(6).%; in doing so we will identify $X(K)\equiv\Nat(\CHaus_{\upkappa}(-,K),X)=\Cond_{\upkappa}(\underline{K},X)$ according to Yoneda's lemma without mentioning this explicitly every time.

\smallskip

\ref{DFN-COMP-4}(1): Since $\varnothing=\coprod_{\varnothing}$ holds in $\CHaus_{\upkappa}$,  \cref{def:cond}(1) implies that there is $\upphi\in X(\varnothing)=\prod_{\varnothing}=*$. Then $\upphi_*\colon\varnothing\cong\CHaus_{\upkappa}(*,\varnothing)\rightarrow X(*)$ has an empty range.

\smallskip

\ref{DFN-COMP-4}(2): Follows from \cref{rem:eval}(3).

\smallskip

\ref{DFN-COMP-4}(3): For $\ran(\upphi_*),\ran(\uppsi_*)\in\Comp_{X}$ we have $(\upphi,\uppsi)\in X(K)\times X(L)\cong X(K\sqcup L)$ due to \ref{def:cond}(1). Let $k\in K$ be given and consider the maps $\underline{k}\colon *\rightarrow K$, $\text{pt}\mapsto k$, $i_1\colon K\rightarrow K\sqcup L$, $k\mapsto(k,1)$ and $\underline{(k,1)}\colon *\rightarrow K\sqcup L$,  $\text{pt}\mapsto (k,1)$. Using that $X(i_1)=\uppi_1$ due to \ref{def:cond}(1), we obtain
$$
(\upphi,\uppsi)_*\bigl((k,1)\bigr) = X(\hspace{0.7pt}\underline{k}\hspace{0.7pt})X(i_1)(\upphi,\uppsi)= X(\hspace{0.7pt}\underline{k}\hspace{0.7pt})\hspace{1pt}\uppi_1(\upphi,\uppsi) = X(\hspace{0.7pt}\underline{k}\hspace{0.7pt})(\upphi)
$$
and we may conclude that $\ran((\upphi,\uppsi)_*)=\ran(\upphi_*)\cup\ran(\uppsi_*)$.

\smallskip

\ref{DFN-COMP-4}(4): Employing \cref{def:uly} we observe that the elements of $\Comp_{\topuly{X}}$ are necessarily compact subsets of $\topuly{X}$. In order to get that $\Comp_{X}$ is stable under intersections over non-empty index sets, it is thus sufficient to show 
$$
(\circ)\;\;\;\forall\:K\in\Comp_{X},\,L\subseteq K\text{ with }K\subseteq\topuly{X}\text{ closed}\colon L\in\Comp_{X}.
$$
We first observe the following: Given $K\in\Comp_{X}$, there exists some $K'\in\CHaus_{\upkappa}$ and $\upphi\in X(K')$ with $K=\ran(\upphi_*)$ by definition. However, using \cref{def:uly} and \cref{lem:qs-ff}, we see that $\upphi_*\colon K'\rightarrow K\subseteq X(*)$ is a surjection of compacts and we find $\uppsi\in X(K)$ such that $X(\upphi_*)(\uppsi)=\upphi$ according to \cref{def:cond}(2). As in the previous paragraph, naturality shows $K=\ran(\uppsi_*)$ which means that we may w.l.o.g.\ assume $K=K'$ and that the map $\uppsi_*\colon K\rightarrow X(*)$ is the inclusion map.

\smallskip

To establish $(\circ)$  let now $K=\ran(\upphi_*)$ with $\upphi\in X(K)$ and $i\colon L\hookrightarrow K$ be the inclusion map of a closed subspace. We put $\uppsi:=X(i)(\upphi)\in X(L)$ and, again, by naturality, $L=\ran(\uppsi_*)$ holds.

\smallskip

\ref{DFN-COMP-4}(5) and \ref{DFN-COMP-4}(6): To apply \cref{LEM-SHORTCUT}(4), let us show that on $K\in\Comp_{X}$ the topology $\uptau_K$ as defined \cref{DFN-COMP-4} and the topology induced by $\topuly{X}$ coincide. Let first $F\subseteq(K,\uptau_K)$ be closed. Then by \cref{LEM-SHORTCUT}(1) we have $F\in\Comp_{X}$, which means that $F\subseteq\topuly{X}$ is closed by what we noted in the paragraph after $(\circ)$. For the other direction assume that $F\subseteq K$ is closed w.r.t.\ the topology induced by  $\topuly{X}$. Then $F\subseteq\topuly{X}$ is closed and by $(\circ)$ we obtain $F\in\Comp_{X}$.

\smallskip

It remains to check that the assignment $X\mapsto(X(*),\Comp_{X})$ is a functor. For this let $\upeta\in\Cond_{\upkappa}(X,Y)$ and let $K=\ran(\upphi_*)\in\Comp_X$ with $\upphi\in X(K)$ be given. As $\upeta$ is a natural transformation we have for $k\in K$ the diagram
\[
\begin{tikzcd}
X(K)\ar[r, "\upeta_K"]\ar[d,  "X(\underline{k})"'] & \underline Y(K) \ar[d, "Y(\underline{k})"] \\
X(*) \ar[r, "\upeta_K"'] & Y(*).
\end{tikzcd}
\]
Putting $\uppsi:=\upeta_K(\upphi)\in Y(K)$ the diagram yields $\upeta_*(\ran(\upphi_*))=\ran(\uppsi_*)$, and thus $\upeta$ maps $\Comp_X$ into $\Comp_Y$. By \cref{def:uly} we know that $\upeta\colon\topuly{X}\rightarrow\topuly{Y}$ is continuous and for $K\in\Comp_X$ the topology $\uptau_{\hspace{0.5pt}\upeta(K)}$ coincides with the topology induced by $\topuly{X}$. Thus $\upeta$ is a morphism in $\Cmp_\upkappa$.
\end{proof}

Observe that $\Comp_X$ does in general not consist of all compact subsets of $\topuly{X}$. On the other hand, the paragraph after $(\circ)$ in the above proof shows that for $X\in\qsCond_{\upkappa}$ the topology induced by $\Comp_X$ on $X(*)$ equals the topology of $\topuly{X}$. The next result establishes in detail the equivalence that was mentioned without much elaboration in \cite[p. 15]{ScholzeComplexGeometry}.

\thmcmpcondqs
%\begin{thm}\label{thm:cmp-condqs}
   % The restricted Yoneda embedding and the underlying compactological set induce an equivalence of categories
   % \[
     %   \yo: \Cmp_\upkappa \overset\sim\adjarrow \qsCond_\upkappa :\cmpuly-
   % \]
    %between $\upkappa$-compactological and quasiseparated $\upkappa$-condensed sets.
%\end{thm}

\begin{proof} 
    For the proof we will drop $\upkappa$ and also $\upiota\colon\CHaus\rightarrow\Cmp$ from notation and understand the involved categories as appropriately restricted. By \cref{lem:rest-yon} the restricted Yoneda embedding $\yo\colon\Cmp\to\Cond$ is fully faithful. It suffices thus to show that for any $X\in\qsCond$ there are isomorphisms $X(K) \cong \yo(\cmpuly X)(K)$ that are natural in $K$; this will imply then that $\yo$ is essentially surjective. It will follow from our construction that these are also natural in $X$, thereby exhibiting $\cmpuly-$ as the desired quasi-inverse.
    %with $\cmpuly-$ as its quasi-inverse.
    
    \smallskip
    
    We note first that $\cmpuly{\underline K} \cong K$ holds naturally for any $K\in\CHaus$ by explicit computation and show next that for all $X\in\qsCond, K\in\CHaus$ we have a natural bijection given by $\cmpuly-$:
    \[
    (\bullet)\;\;\;\;\;\Cond(\underline K, X) \longrightarrow \Cmp(K, \cmpuly X).
    \]
    Indeed, injectivity follows from \cref{lem:qs-ff}. To see surjectivity let $f\in\Cmp(K,\cmpuly{X})$ be given. Then we have $\ran f\in\Comp_X$ as defined in \cref{lem:uly-cmp}. Consequently, we may read $\ran f\in\CHaus$ and further there exist $K'\in\CHaus$, $\uppsi\in X(K')$ with $\ran f = \ran\uppsi_*$. From this we get natural maps
    $$
    \overline{\uppsi}\colon K'\onto \ran f \;\;\text{ and }\;\; i\colon\ran f\hookrightarrow \cmpuly{X},
    $$
    where $\overline{\uppsi}$ is an epimorphism between compact spaces and $i$ is a morphism of compactological sets. We claim that $\uppsi\in\ran X(\overline{\uppsi})$ holds. For this denote by  $\uppi_1, \uppi_2\colon K'\times_{\ran f} K' \to K'$ the projections. For $j=1,2$ the Yoneda Lemma yields
    \[
    \begin{tikzcd}
    X(K')\ar[r, "X(\uppi_j)"]\ar[d,  "\sim"'] & X(K'\times_{\ran f}K')  \ar[d, "\sim"] \\
    \Cond(\underline{K}',X) \ar[r, "\uppi_j^{\hspace{1pt}*}"'] & \Cond(\underline{K'\times_{\ran f}K'},X)
    \end{tikzcd}
    \]
    and thus $X(\uppi_1)\uppsi=X(\uppi_2)\uppsi$ holds in $X(K'\times_{\ran f}K')$ iff $\uppi_1^{\hspace{1.5pt}*}\hspace{0.5pt}\uppsi=\uppi_2^{\hspace{1.5pt}*}\hspace{0.5pt}\uppsi$ holds in $\Cond(\underline{K},X)$. By \cref{lem:qs-ff} the latter is equivalent to $(\uppi_1^{\hspace{1.5pt}*}\hspace{0.5pt}\uppsi)_*=(\uppi_2^{\hspace{1.5pt}*}\hspace{0.5pt}\uppsi)_*$ in $\Set(K,X(*)),$ which holds by definition of the fiber product using that we have $(\uppi_j^{\hspace{1.5pt}\hspace{0.5pt}*}\hspace{0.5pt}\uppsi)_*=\uppsi_*\circ\uppi_j$ for $j=1,2$. This establishes the claim and invoking \cref{def:cond}(2) there exists $\mathfrak i\in X(\ran f)$ such that
    $$
    X(\overline{\uppsi})(\mathfrak i)=\uppsi.
    $$
    We read $\mathfrak i\in\Cond(\underline{\ran f},X)$ and observe that $\cmpuly{\mathfrak i}\colon \cmpuly{\underline{\ran f}}\cong\ran f\rightarrow \cmpuly{X}$ coincides with the inclusion $i$: Indeed, by our selection of $\mathfrak i$ above, and by the very definitions of $\overline{\uppsi}$ and $i$, we obtain ${\mathfrak i}_*\circ\overline{\uppsi}=\uppsi_*=i\circ\overline{\uppsi}$; since $\overline{\uppsi}$ is surjective it then follows ${\mathfrak i}_*=i$ as maps of sets and thus $\cmpuly{\mathfrak i}=i$. Now we consider
    $$
    X(f|^{\ran f})\colon X(\ran f)\rightarrow X(K)
    $$
    and define $\tilde{f}:=X(f|^{\ran f})(\mathfrak i)\in X(K)$ which we may read as a sheaf $\tilde{f}\colon\underline{K}\rightarrow X$. It follows $(f|^{\ran f})^*\hspace{1pt}{\mathfrak i}=\tilde{f}$ from the definition of $\tilde{f}$ and thus
    $$
    (\tilde{f}\hspace{1.25pt})_*=((f|^{\ran f})^*\hspace{1pt}{\mathfrak i}\hspace{0.5pt})_*={\mathfrak i}_*\circ f|^{\ran f} = i\circ f =  f,
    $$
    which means that $\cmpuly{\tilde{f}\hspace{1pt}}=f$ holds as morphisms in $\Cmp$ and establishes that the map in $(\bullet)$ is surjective. That the bijection is natural in $X$ and $K$ holds because, on a level of sets, we are only performing evaluations in $*$.
    
    \smallskip
    
    Having the natural bijection $(\bullet)$, Yoneda's Lemma yields the desired natural isomorphism $X(K) \cong \Cond(\underline K, X) \cong \Cmp(K, \cmpuly X) = \yo(\cmpuly X)(K)$, which finishes the proof in the restricted case. 

    \smallskip

    For the unrestricted case, observe that by \cite[Prop 2.15]{ScholzeCondensedMath} the Yoneda embedding of any $\CHaus_\upkappa$ into $\Cond$ factors through some $\Cond_{\upkappa'}$ with $\upkappa'$ sufficiently large in relation to $\upkappa$. It follows by preservation of finite limits by any of the inclusions $\Cond_\upkappa\into\Cond_{\upkappa'}$ (and pullback stability of epimorphisms in categories of sheaves) that quasiseparatedness of a condensed set $X\in\Cond_\upkappa$ embedded into $\Cond_\upkappa$ stabilizes for sufficently large $\upkappa'$. Choosing such a $\upkappa'$ for every $\upkappa$, we may construct a cone of fully faithful functors $\Cmp_\upkappa\into\Cmp_{\upkappa'}\into \Cond_{\upkappa'}$ under the directed diagram of categories $\Cmp_\upkappa$. The functor $\Cmp\to\Cond$ corresponding to this cone will still be fully faithful and its essential image will, by construction, correspond to those condensed sets who are quasiseparated for all cardinals above some $\upkappa'$. These are exactly the condensed sets quasiseparated in $\Cond$.
\end{proof}

Using \cref{lem:qs-refl} and identifying compactological with quasiseparated condensed sets via  \cref{thm:cmp-condqs} we get the following.

\begin{cor}\label{cor:cmp-refl} Compactological sets form a reflective subcategory of condensed sets. % and we henceforth implicitly identify the subcategory of quasiseparated condensed sets with the category of compactological sets. 
We obtain a cocontinuous and product preserving \emph{compactologification functor} $\cmpuly-\colon\Cond_\upkappa\to\qsCond_\upkappa\cong\Cmp_\upkappa$ as the left adjoint of $\Cmp_{\upkappa}\hookrightarrow\Cond_{\upkappa}$. This adjunction lifts to the directed colimits $\Cmp$ and $\Cond$. \qed
\end{cor}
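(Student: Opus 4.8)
The plan is to exhibit the embedding $\Cmp_\upkappa\hookrightarrow\Cond_\upkappa$ as the composite of the equivalence $\yo\colon\Cmp_\upkappa\xrightarrow{\sim}\qsCond_\upkappa$ from \cref{thm:cmp-condqs} with the reflective inclusion $i\colon\qsCond_\upkappa\hookrightarrow\Cond_\upkappa$ from \cref{lem:qs-refl}, and then to assemble the reflector from the pieces. First I would record that this factorisation genuinely holds: by \cref{lem:rest-yon} the functor $\yo$ lands in quasiseparated condensed sets, and post-composing with $i$ returns precisely the embedding $\Cmp_\upkappa\hookrightarrow\Cond_\upkappa$ used throughout.

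Next I would define the compactologification functor as $\cmpuly-\circ L\colon\Cond_\upkappa\to\Cmp_\upkappa$, where $L$ is the finite-product-preserving left adjoint of $i$ supplied by \cref{lem:qs-refl} and $\cmpuly-$ is the quasi-inverse of $\yo$ from \cref{thm:cmp-condqs}. To check that this is left adjoint to the embedding it suffices to chain the natural bijections
\[
\Cmp_\upkappa\bigl(\cmpuly{L(D)},C\bigr)\cong\qsCond_\upkappa\bigl(L(D),\yo(C)\bigr)\cong\Cond_\upkappa\bigl(D,i(\yo(C))\bigr),
\]
where the first isomorphism uses that $\yo$ and $\cmpuly-$ are mutually quasi-inverse equivalences (so $\Cmp_\upkappa(\cmpuly E,C)\cong\qsCond_\upkappa(E,\yo(C))$ for $E=L(D)$) and the second is the adjunction $L\dashv i$. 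As these are natural in $D\in\Cond_\upkappa$ and $C\in\Cmp_\upkappa$, the subcategory $\Cmp_\upkappa$ is reflective; note moreover that $L$ restricts to the identity on $\qsCond_\upkappa$, so $\cmpuly-\circ L$ agrees with the earlier functor $\cmpuly-$ of \cref{lem:uly-cmp} on the reflective subcategory, justifying the reuse of notation.

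I would then read off the preservation properties. Cocontinuity is automatic, since $\cmpuly-\circ L$ is a left adjoint and hence preserves all colimits. For products, $L$ preserves finite products by \cref{lem:qs-refl} while the equivalence $\cmpuly-$ preserves all limits; thus the composite preserves finite products, which is the asserted product preservation.

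Finally, for the passage to the unrestricted categories I would invoke the colimit descriptions in \cref{rem:cmp-size,rem:cond-cond}: the equivalence $\yo\colon\Cmp\xrightarrow{\sim}\qsCond$ is already provided in unrestricted form by \cref{thm:cmp-condqs}, and the reflectors $L$ of \cref{lem:qs-refl} are compatible with the transition functors $\Cond_\upkappa\to\Cond_{\upkappa'}$, which commute with colimits by \cref{rem:cond-cond}(1); hence they assemble to a reflection on the directed colimit $\Cond=\colim_\upkappa\Cond_\upkappa$. The same composition argument then yields the lifted adjunction $\cmpuly-\colon\Cond\to\Cmp$. I expect the only real subtlety to lie in this last step, namely verifying that the reflectors and the equivalences are coherent with the directed colimit structure so that the adjunction descends; everything else is a formal composition of adjunctions.
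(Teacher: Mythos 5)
Your proposal is correct and follows essentially the same route as the paper, which treats the corollary as immediate from \cref{thm:cmp-condqs} and \cref{lem:qs-refl}: identify $\Cmp_\upkappa\simeq\qsCond_\upkappa$ via the equivalence and compose with the finite-product-preserving reflection $L\dashv i$, with the unrestricted case following from the directed-colimit descriptions. Your write-up merely spells out the adjunction chain and the compatibility with transition functors that the paper leaves implicit under its \qed.
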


We conclude this chapter with the following result, which has also been noted by Scholze and Clausen in \cite[Prop 1.2(4)]{ScholzeAnalyticGeometry}.

\begin{cor}\label{cor:ind-condqs}
    The category of quasiseparated $\upkappa$-condensed sets is equivalent to the category $\catstyle{Ind_{emb}}(\CHaus_\upkappa)$ of filtered diagrams of $\upkappa$-small compact spaces, with all transition maps given by topological embeddings. The functor $\underset{\smash[t]{\raisebox{2pt}{$\rightarrow$}}} F\colon \catstyle{Ind_{emb}}(\CHaus_\upkappa)\to\qsCond_\upkappa$ exhibiting this equivalence sends a formal colimit to its realization in $\Cond_\upkappa$, where a compact space $K$ is identified with $\underline K \cong \yo(\upiota(K))$. In particular, the inclusion $\qsCond_\upkappa \into \Cond_\upkappa$ preserves this class of colimits.
\end{cor}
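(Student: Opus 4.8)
The plan is to obtain the equivalence by composing the two equivalences already established, namely $\catstyle{Ind_{emb}}(\CHaus_\upkappa)\cong\Cmp_\upkappa$ from \cref{prop:ind-cmp} and $\Cmp_\upkappa\cong\qsCond_\upkappa$ from \cref{thm:cmp-condqs}. Write $\mathsf R\colon\catstyle{Ind_{emb}}(\CHaus_\upkappa)\to\Cmp_\upkappa$ for the realization functor of \cref{prop:ind-cmp}, sending a formal filtered colimit of embeddings $\upalpha\colon\catI\to\CHaus_\upkappa$ to the genuine colimit $\colim_{i}\upiota(\upalpha(i))$ computed in $\Cmp_\upkappa$. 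Then $\underset{\smash[t]{\raisebox{2pt}{$\rightarrow$}}} F:=\yo\circ\mathsf R$ is an equivalence onto $\qsCond_\upkappa$, being a composite of two equivalences, and it sends a one-object diagram $\upiota(K)$ to $\yo(\upiota(K))=\underline K$, matching the identification required in the statement.

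It remains only to verify that this composite realizes a formal colimit as the corresponding colimit taken inside $\Cond_\upkappa$, and that the inclusion $\qsCond_\upkappa\into\Cond_\upkappa$ preserves this class of colimits. First I would describe $\yo(\mathsf R(\upalpha))$ pointwise. Setting $C:=\colim_i\upiota(\upalpha(i))$ and evaluating at an arbitrary $K'\in\CHaus_\upkappa$, \cref{lem:cmp-emb-comp} gives
\[
\yo(C)(K')=\Cmp_\upkappa(\upiota(K'),C)\cong\colim_i\Cmp_\upkappa(\upiota(K'),\upiota(\upalpha(i)))=\colim_i\underline{\upalpha(i)}(K'),
\]
where the last equality is the definition of the restricted Yoneda embedding; so $\yo(C)$ is the pointwise filtered colimit of the representables $\underline{\upalpha(i)}$.

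The main point, and the only genuine obstacle, is to promote this pointwise identity to a colimit statement in $\Cond_\upkappa$. For this I would observe that a pointwise filtered colimit of condensed sets is again condensed: the sheaf conditions \ref{def:cond}(1) and \ref{def:cond}(2) are expressed entirely through finite products and equalizers in $\Set$, and filtered colimits commute with finite limits, so $\colim_i\underline{\upalpha(i)}$ is already a condensed set and hence computes the colimit in $\Cond_\upkappa$. Together with the display this yields $\yo(C)\cong\colim_i\underline{\upalpha(i)}$ in $\Cond_\upkappa$. Since $\yo(C)\in\qsCond_\upkappa$ by \cref{thm:cmp-condqs}, this $\Cond_\upkappa$-colimit is automatically quasiseparated, so it coincides with the colimit formed in the reflective subcategory $\qsCond_\upkappa$ (cf.\ \cref{lem:qs-refl}); this simultaneously identifies the realization and proves the final assertion about the inclusion. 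Finally, the unrestricted statement follows by passing to the directed colimit over $\upkappa$, exactly as in the closing paragraph of the proof of \cref{thm:cmp-condqs}.
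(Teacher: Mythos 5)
Your proposal is correct and takes essentially the same route as the paper's proof: both compose the equivalences of \cref{prop:ind-cmp} and \cref{thm:cmp-condqs}, then combine \cref{lem:cmp-emb-comp} with the fact that filtered colimits of condensed sets are computed pointwise (filtered colimits commuting with finite limits in $\Set$) to identify $\yo(\colim\upiota\circ\upalpha)$ with the colimit of the $\underline{\upalpha(i)}$ in $\Cond_\upkappa$. Your extra remarks\,---\,making explicit via \cref{lem:qs-refl} why the colimit also serves as the colimit in the reflective subcategory $\qsCond_\upkappa$, and the passage to unrestricted cardinals\,---\,are harmless refinements of what the paper leaves implicit.
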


\begin{proof}
    The equivalence of categories follows directly from \cref{prop:ind-cmp,thm:cmp-condqs}. Unraveling the definitions it follows that the functor $\smash{\underset{\smash{\raisebox{2pt}{$\rightarrow$}}} F}\colon \catstyle{Ind_{emb}}(\CHaus_\upkappa)\to\qsCond_\upkappa$ is given by $\upalpha \mapsto \Cmp_\upkappa(\upiota(-), \colim \upalpha)$ for $\upalpha\in\catstyle{Ind_{emb}}(\CHaus_\upkappa)$. Now, according to \cref{lem:cmp-emb-comp}, $\Cmp_\upkappa(\upiota(K), -)$ commutes with filtered colimits of compact spaces and embeddings for all $K\in\CHaus_\upkappa$. Finally, commutativity of filtered colimits with finite limits in $\Set$ implies that filtered colimits of condensed sets are computed pointwise, because the sheaf condition is preserved. We obtain
    \begin{align*}
        (\colim \yo\circ\upiota\circ\upalpha)(K) &\cong \colim ((\yo\circ\upiota\circ\upalpha)(K)) \\
        &\cong \colim \Cmp_\upkappa(\upiota(K), \upiota\circ\upalpha) \\
        &\cong \Cmp_\upkappa(\upiota(K), \colim \upiota\circ\upalpha) \\
        &\cong \underset{\smash[t]{\raisebox{2pt}{$\rightarrow$}}} F(\upalpha)(K),
    \end{align*}
    which proves the claim.
\end{proof}

\section{The category of compactological sets}\label{SEC:CmpCat}

In order to highlight that compactological sets are not just a classical description of certain condensed objects but form an interesting category of spaces in their own right, we employ the usual machinery of category theory to derive multiple novel results on compactological sets. In particular the categories $\Cmp_\upkappa$ and $\Cmp$ are cartesian closed and both regular and coregular, inheriting the distinguishing properties of $\CGWH$ while being exceptionally well described as formal and non-degenerate gluings of compact spaces according to \cref{prop:ind-cmp}. %Finally, the characterization already given in \cref{DFN-COMP-4} exhibits compactological sets as comparably \enquote{primitive} to other notions of \enquote{nice} topological spaces.

\smallskip

We begin by subsuming \cref{cor:cmp-bicomp} with the significantly stronger property of totality. Due to the set-theoretic considerations noted in \cref{rem:cond-cond} this result has to be treated with care, as neither $\Cmp$ nor $\Cmp_\upkappa$ is (essentially) small and therefore $\PSh{\Cmp_\upkappa}$ is not a (locally small) category anymore. However, making use of the fact that $\Cmp_\upkappa$ is essentially determined by the small subcategory $\CHaus_\upkappa$, the argument below shows that there exists a class-assignment $\PSh{\Cmp_\upkappa}\to\Cmp_\upkappa$, which is left adjoint to $\yo$ as far as that notion is defined. This is equivalent to the definition of totality based on the existence of weighted colimits, as the assignment we construct exactly computes these. In particular the proof can be carried out unrestricted when assuming the existence of a larger set-theoretic universe rendering $\PSh{\Cmp_\upkappa}$ a well-defined category. Under such an assumption the result extends to $\Cmp$.

\begin{prop}\label{prop:cmp-total}
    The category of compactological sets is total, that is, the Yoneda embedding $\yo\colon\Cmp_\upkappa\into\PSh{\Cmp_\upkappa}$ admits a (class-sized) left adjoint. It follows that the category is complete, cocomplete and admits all large limits and colimits that a locally small category can admit.
\end{prop}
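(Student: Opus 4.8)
The plan is to realize $\CHaus_\upkappa$ as a small dense subcategory of the cocomplete category $\Cmp_\upkappa$, and then to run the standard argument that a cocomplete category with a small dense subcategory is total, being careful with the size issues flagged in the paragraph preceding the statement. Cocompleteness is \cref{cor:cmp-bicomp}. Density is exactly the content of \cref{lem:rest-yon}: the restricted Yoneda functor $N := \Cmp_\upkappa(\upiota(-),-)\colon \Cmp_\upkappa\to\PSh{\CHaus_\upkappa}$ is fully faithful (equivalently, by \cref{prop:cmp-gen-compact}, every $C$ is the canonical colimit of the compacta in its compactology). Since $\CHaus_\upkappa$ is essentially small, this furnishes the required small dense subcategory.

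Next I would write down the candidate left adjoint as a class-assignment, defined by the small weighted colimit
\[
    L(P) := \colim\Bigl(\int^{K\in\CHaus_\upkappa} P(\upiota K)\cdot\upiota K\Bigr),
\]
that is, the colimit over the category of elements of the restriction $K\mapsto P(\upiota K)$ of the diagram $\upiota\colon\CHaus_\upkappa\to\Cmp_\upkappa$, where $P(\upiota K)\cdot\upiota K$ denotes the copower of $\upiota K$ by the set $P(\upiota K)$. The decisive point for legitimacy is that this formula evaluates $P$ only on the \emph{small} category $\CHaus_\upkappa$ and forms only a \emph{small} colimit, which exists in $\Cmp_\upkappa$ by cocompleteness. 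Hence $L$ is well-defined on the whole (non-locally-small) presheaf ``category'' $\PSh{\Cmp_\upkappa}$, and it computes precisely the weighted colimit appearing in the weighted-colimit formulation of totality.

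The adjunction $L\dashv\yo$ is then verified by a density argument. For $C\in\Cmp_\upkappa$ the defining colimit together with the end calculus gives, naturally in $P$ and $C$,
\[
    \Cmp_\upkappa(L(P),C)\;\cong\;\int_{K}\Set\bigl(P(\upiota K),\Cmp_\upkappa(\upiota K,C)\bigr)\;=\;\PSh{\CHaus_\upkappa}\bigl(P|_{\CHaus_\upkappa},\,N C\bigr).
\]
It remains to identify the right-hand side with $\PSh{\Cmp_\upkappa}(P,\yo C)$. Both of these are functors of $P$ that send colimits of presheaves to limits of sets, so it suffices to compare them on representables $P=\yo C'$: there the target is $\Cmp_\upkappa(C',C)$ by the Yoneda lemma, while the source is $\PSh{\CHaus_\upkappa}(N C', N C)\cong\Cmp_\upkappa(C',C)$ by full faithfulness of $N$, i.e.\ by density. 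Since every presheaf is canonically a colimit of representables, this natural isomorphism on representables extends uniquely, yielding $\Cmp_\upkappa(L(P),C)\cong\PSh{\Cmp_\upkappa}(P,\yo C)$ and hence totality.

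The stated consequences are then formal: a total category is automatically complete and cocomplete and admits all the large limits and colimits compatible with local smallness, so the final sentence follows from the general theory of total categories. The main obstacle here is not any individual computation but the careful bookkeeping of sizes: one must ensure that $L$, although a priori only a class-assignment out of a non-locally-small presheaf ``category,'' is genuinely well-defined precisely because it depends on $P$ only through its restriction to the small subcategory $\CHaus_\upkappa$, and that the reduction ``check on representables'' remains legitimate in this enlarged setting. This is exactly the reduction for which the paragraph preceding the proposition prepares, so the proof should lean on that framework rather than attempting to treat $\PSh{\Cmp_\upkappa}$ as an honest category.
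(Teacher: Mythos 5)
Your proof is essentially correct, but it takes a genuinely different route from the paper. You run the standard categorical argument ``cocomplete $+$ small dense subcategory $\Rightarrow$ total'': density of $\CHaus_\upkappa$ via the full faithfulness of $N=\Cmp_\upkappa(\upiota(-),-)$ (\cref{lem:rest-yon}, resting on \cref{prop:cmp-gen-compact}), cocompleteness from \cref{cor:cmp-bicomp}, and a left adjoint given by the weighted-colimit (coend) formula that only sees $P$ through its restriction to the small category $\CHaus_\upkappa$. The paper instead \emph{constructs} the left adjoint concretely and topologically: $F\mapsto hF(\ast)$, where $F(\ast)$ carries the final topology induced by all maps $C\to F(\ast)$ coming from elements of $F(C)$ (by \cref{prop:cmp-gen-compact} a small family of maps out of compacta suffices), $h$ is the weak-Hausdorffification reflector $\cgTop\onto\CGWH$ of \cite[Prop 2.22]{CGWH}, and the compactology is generated by the ranges of the induced maps from compact spaces; the adjunction bijection is then checked by hand on underlying sets. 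The paper explicitly acknowledges that an abstract proof exists (via $\Cmp_\upkappa\cong\qsCond_\upkappa$ being reflective in the topos $\Cond_\upkappa$, \cref{cor:cmp-refl}) and deliberately opts for the explicit construction because the concrete description of the adjoint is reused later\,---\,notably in \cref{lem:cmp-yon-ref-prod} (product preservation of the reflector, feeding into cartesian closedness in \cref{prop:cmp-cart-clos}) and as an explicit description of the reflector $\Cond_\upkappa\to\qsCond_\upkappa$. Your route buys brevity, independence from the condensed-set machinery of \cref{SEC:Equiv}, and a standard reference point; what it loses is precisely that explicit description. Two presentational caveats: the notation $\colim\bigl(\int^{K}P(\upiota K)\cdot\upiota K\bigr)$ is redundant (the coend already is that colimit), and your crux step\,---\,identifying $\PSh{\CHaus_\upkappa}(P|_{\CHaus_\upkappa},NC)$ with $\PSh{\Cmp_\upkappa}(P,\yo C)$ by ``check on representables and extend along colimits''\,---\,silently invokes the canonical colimit decomposition of a presheaf over the \emph{large} category $\Cmp_\upkappa$, which is exactly where the size issues you flag become acute. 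This is repairable, and more cleanly so, by arguing directly that restriction along $\upiota$ is a bijection on natural transformations: injectivity and surjectivity both follow pointwise from \cref{prop:cmp-gen-compact} (a morphism $C'\to C$ is determined by, and can be glued from, its restrictions to the compactology of $C'$), which keeps every step small and matches the spirit of the paper's own ``determination by small data'' bookkeeping.
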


For a precise treatment of totality, including clear charaterizations of which large limits and colimits exists, we refer to \cite[Sec.~5]{Kelly86Totality}.
Abstractly the above proposition follows as $\Cmp_\upkappa$ is a reflective subcategory of the topos $\Cond_\upkappa$ according to \cref{cor:cmp-refl}. As any topos is total, it is a reflective subcategory of a total category and therefore itself total. In the following we provide an explicit construction for the left adjoint. Because this construction only depends on the restriction of any presheaf $\Cmp_\upkappa\to\Set$ along $\CHaus_\upkappa\into\Cmp$, this proof provides an explicit description of the left adjoint of the inclusion $\Cmp_\upkappa\cong\qsCond_\upkappa\into\Cond_\upkappa$.

\begin{proof}
    We will construct a compactological set associated to a presheaf in \textcircled{1} and show that this construction is canonically functorial in \textcircled{2}. We will then exhibit this class-sized functor as `left adjoint' to the Yoneda embedding in \textcircled{3} to \textcircled{6}.

\smallskip

    \textcircled{1} Let $F: \Cmp^{\op}_\upkappa\to\Set$ be a presheaf over compactological sets. By the Yoneda lemma the elements of the set $F(C)$ correspond bijectively to maps $\yo(C)\to F$, where $C\in\Cmp_\upkappa$ is some $\upkappa$-compactological set. Evaluation at the point $\ast\in\Cmp_\upkappa$ induces a map of sets $\yo(C)(\ast)\cong C\to F(\ast)$. We aim to equip the set $F(\ast)$ with the final topology with respect to all such maps. Making use of \cref{prop:cmp-gen-compact}, this final topology is defined by the small set of maps out of $\upkappa$-compact spaces, rendering $F(\ast)$ a compactly generated space.

\smallskip
    
    Now, according to \cite[Prop 2.22]{CGWH}, there is a reflective inclusion $\CGWH\into\cgTop$, with reflector $h\colon\cgTop\onto\CGWH$. That this restricts to $\upkappa$-compactly generated and $\upkappa$-cgwh spaces follows by definition. Importantly, the unit of this adjunction consists of regular epimorphisms $T\onto hT$ for $T\in\cgTop$. By definition, $hF(\ast)$ is $\upkappa$-compactly generated by all maps $K\into C\to F(\ast) \onto hF(\ast)$ defined through some element $\upphi\in F(C)$ and some element of the compactology $K\in\Comp_C$, which is always $\upkappa$-small. Because $hF(\ast)$ is weak Hausdorff, we may use that the different versions of compact generation are equivalent and the topology on $hF(\ast)$ is generated by all sets of the form 
    \[
        \ran(K\into C\to F(\ast)\onto hF(\ast)).
    \]
    By naturality, this system of sets is closed under inclusions of compacts, as the restriction $F(f)(\upphi)$ of some $\upphi\in F(C)$ along $f\colon K\into C$ for $K\in\Comp_C$ always induces a map $K\to F(\ast)$, which is the restriction of $\upphi: C\to F(\ast)$. It is also covering by the fact that $F(\ast)$ bijects with maps $\ast\into F(\ast)$. Closing it under finite union we obtain a $\upkappa$-compactology on $hF(\ast)$ by the same argumentation as in \cref{lem:uly-cmp}.

\smallskip
    
    \textcircled{2} From (class-sized) functoriality of the evaluation $F\mapsto F(\ast)$ and the universal characterization of the final topology, we obtain that any natural transformation $\upeta\colon F\to G$ induces a map $F(\ast)\to G(\ast)$, which will be continuous with respect to the final topology on the two spaces. Now functoriality of the weak-Hausdorffification $F(\ast)\to hF(\ast)$ ensures that our construction is functorial into the category of cgwh-spaces.
    Finally, for any natural transformation $\upeta\colon F\to G$, it follows directly from naturality that the induced map $h\upeta_\ast\colon hF(\ast) \to hG(\ast)$ must map $\ran(\upphi)$ to $\ran(\upeta_C(\upphi))$ for $\upphi\in F(C)$, $C\in\Cmp_\upkappa$ and thus the functorially associated continuous maps of topological spaces lift to morphisms of compactological sets.

\smallskip
    
    \textcircled{3} It remains to show that this construction is 'left adjoint' to the Yoneda embedding. We will construct maps between $\Cmp_\upkappa(hF(\ast), C)$ and $\PSh{\Cmp_\upkappa}(F, \yo(C))$, natural in both $F$ and $C$, which we will show to be inverse to each other. 
    For this, observe first that any morphism of presheaves $\upeta\colon F\to \yo(C)$ is uniquely determined by $\upeta_\ast\colon F(\ast)\to C$, i.e., by its behavior on sets: Indeed, any two distinct morphisms $\upeta,\upepsilon\colon F\to \yo(C)$ admit some $\upphi\colon \yo(C')\to F$ such that $\upeta\circ\upphi\neq \upepsilon\circ\upphi$. The claim now follows because any map $\yo(C')\to\yo(C)$ is determined by its behavior on sets according to the corresponding property of compactological sets.
    %For this, observe first that any morphism of presheaves $\upeta\colon F\to \yo(C)$ is uniquely determined by its behaviour on sets $\upeta_\ast\colon F(\ast)\to C$, owing to any two distinct morphisms $\upeta,\upepsilon\colon F\to \yo(C)$ admitting some $\upphi\colon \yo(C')\to F$ such that $\upeta\circ\upphi\neq \upepsilon\circ\upphi$ and any map $\yo(C')\to\yo(C)$ being determined by its behavior on sets according to the corresponding property of compactological sets. 
    We will use this fact to show injectivity of the two assignments we construct.

\smallskip
    
    \textcircled{4} Using the canonical epimorphism $\upupsilon\colon F(\ast)\onto hF(\ast)$, and that it is natural in $F$, any map $f\colon hF(\ast)\to C$ determines a unique map of sets $f\circ\upupsilon\colon F(\ast)\to C$. Further, by definition, any $\upphi\in F(C')$ determines a map of compactological sets $C'\to hF(\ast)$, inducing, by composition, an element $C'\to hF(\ast)\to C \in \yo(C)(C')$. It follows that there is a canonical map
    \[
        \upxi\colon \Cmp_\upkappa(hF(\ast), C)\into \PSh{\Cmp_\upkappa}(F, \yo(C)),
    \]
    which is an injection of sets by \textcircled{3} and whose naturality in both components follows by commutativity of pre- and postcomposition, which is in this case uniquely determined by its behavior on underlying sets.

\smallskip

    \textcircled{5} As $h(\yo(C)(\ast)) \cong C$, which follows from idempotence of $h\colon \cgTop\onto\CGWH$ and the fact that $\yo(C)(\ast)\cong C$ holds as sets, we obtain another map of sets
    \[
        \tilde\upxi: \PSh{\Cmp_\upkappa}(F, \yo(C)) \to \Cmp_\upkappa(hF(\ast), h(\yo(C))(\ast)) \cong \Cmp_\upkappa(hF(\ast), C).
    \]
    This map too is injective as the definition of $\upupsilon_C\colon C\to h(C)$ implies that $\tilde\upxi(\upeta)\circ\upupsilon\colon F(\ast)\onto hF(\ast)\to C$ recovers the behavior of $\upeta\colon F\to\yo(C)$ on underlying sets. 

\smallskip
    
    \textcircled{6} The preservation of the underlying behavior by both $\upxi$ and $\tilde\upxi$ implies that 
    \[
        \upxi\circ\tilde\upxi = \id_{\PSh\Cmp(F, \yo(C))}\;\; \text{ and }\;\;\tilde\upxi\circ\upxi = \id_{\Cmp(hF(\ast), C)}
    \]
    hold. In particular, $(\upxi, \tilde\upxi)$ is the desired bijection and the two class-functors are adjoint.
\end{proof}

\begin{lem}\label{lem:cmp-yon-ref-prod}
    The (class-sized) reflector $\PSh{\Cmp_\upkappa}\to\Cmp_\upkappa$, left adjoint to the Yoneda embedding, commutes with finite products.
\end{lem}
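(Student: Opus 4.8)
The plan is to exploit the factorization of the reflector that is pointed at in the remark preceding the proof of \cref{prop:cmp-total}. Writing $L\colon\PSh{\Cmp_\upkappa}\to\Cmp_\upkappa$ for the class-sized left adjoint to $\yo$ constructed there, I would argue that, since $L(F)$ is manufactured solely from the values of $F$ on $\upkappa$-small compacta together with the induced maps into $F(\ast)$, the functor $L$ splits (up to natural isomorphism) as
\[
L\;\cong\;\cmpuly{-}\circ a\circ\rho,
\]
where $\rho\colon\PSh{\Cmp_\upkappa}\to\Fun(\CHaus_\upkappa^{\op},\Set)$ is restriction along $\upiota\colon\CHaus_\upkappa\hookrightarrow\Cmp_\upkappa$, where $a$ is the associated-sheaf functor onto $\Cond_\upkappa$, and where $\cmpuly{-}\colon\Cond_\upkappa\to\Cmp_\upkappa$ is the compactologification of \cref{cor:cmp-refl}. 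Concretely, the final topology on $F(\ast)$ from \textcircled{1} is the underlying topology of $a\rho(F)$, the weak Hausdorffification $\upupsilon\colon F(\ast)\onto hF(\ast)$ realizes the quasiseparation reflection, and the generated compactology is precisely $\Comp_{a\rho(F)}$ in the sense of \cref{lem:uly-cmp}.

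Granting this factorization, the conclusion is immediate. The functor $\rho$ preserves all limits, since limits of presheaves are computed objectwise and $\rho$ merely evaluates on the objects of $\CHaus_\upkappa$; the associated-sheaf functor $a$ is left exact and hence preserves finite products (recall from \cref{rem:cond-cond}(3) that $\Cond_\upkappa$ is a category of sheaves on a site); and $\cmpuly{-}$ preserves finite products by \cref{cor:cmp-refl}, equivalently by the product-preservation of the quasiseparation reflector in \cref{lem:qs-refl}. The empty product is covered in the same spirit: the terminal presheaf restricts to the terminal condensed set, whose compactologification is the one-point compactological set, i.e.\ the terminal object of $\Cmp_\upkappa$. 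Composing the three finite-product-preserving functors then shows that $L$ preserves finite products.

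The main obstacle is the verification of the factorization itself, i.e.\ that the hands-on construction in \textcircled{1}\,--\,\textcircled{2} of \cref{prop:cmp-total} agrees with $\cmpuly{-}\circ a\circ\rho$; the only non-formal point is that the quotient $\upupsilon\colon F(\ast)\onto hF(\ast)$ produced by weak Hausdorffification coincides with the identifications made when passing to the quasiseparated reflection of $a\rho(F)$. I would check this by comparing, on both sides, the underlying sets, the final topologies of \cref{def:uly}, and the compactologies of \cref{lem:uly-cmp}. Should one prefer to sidestep the factorization, one can instead show directly that the canonical comparison $L(F\times G)\to L(F)\times L(G)$ is an isomorphism: using that products in $\PSh{\Cmp_\upkappa}$ are objectwise together with the product description of \cref{prop:cmp-prod}(2), this reduces to the single topological fact that weak Hausdorffification $h\colon\cgTop\to\CGWH$ preserves finite products, which holds because $\CGWH$ is an exponential ideal in the cartesian closed category $\cgTop$.
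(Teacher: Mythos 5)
Your main route is genuinely different from the paper's. The paper argues directly: products of presheaves are computed pointwise, so $F\mapsto F(\ast)$ preserves finite products; the weak Hausdorffification $h\colon\cgTop\to\CGWH$ preserves finite products (proved by hand, by showing the smallest closed equivalence relations satisfy $R_{X\times Y}\cong R_X\times R_Y$); and finally the compactology of $h(F\times G)(\ast)$ is matched against the cylinder description of the product compactology from \cref{prop:cmp-prod}(2). Your factorization $L\cong\cmpuly{-}\circ a\circ\rho$ replaces all of this by left exactness of sheafification together with \cref{cor:cmp-refl}, which is legitimate in the paper's logical order (\cref{cor:cmp-refl} rests only on \cref{thm:cmp-condqs} and the cited \cref{lem:qs-refl}, both available before Section 4); what it buys is a topology-free proof, at the price of outsourcing the analytic content to the unproven citation \cref{lem:qs-refl}, whereas the paper's argument is self-contained. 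Your closing alternative (reduce to product preservation of $h$, justified by $\CGWH$ being an exponential ideal in $\cgTop$) is essentially the paper's proof with a slicker justification of the key topological fact; note, however, that even on that route one must still check that the compactologies of $L(F\times G)$ and $L(F)\times L(G)$ agree --- that step is not subsumed by the purely topological statement.

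The one genuine problem is your plan for verifying the factorization. You propose a pointwise comparison of the construction in \cref{prop:cmp-total} with $\cmpuly{-}\circ a\circ\rho$, starting from the claim that the final topology on $F(\ast)$ is the underlying topology of $a\rho(F)$. This implicitly assumes $(a\rho F)(\ast)=F(\ast)$, which is false in general: the point $\ast$ is covered by every nonempty $K\onto\ast$, so sheafification may both identify and create sections at $\ast$; likewise the quasiseparation reflection is not computed by weak Hausdorffification of underlying spaces. The factorization itself is nevertheless correct, and it is easier to prove it formally than by comparing constructions: $\cmpuly{-}\circ a\circ\rho$ is a composite of three left adjoints, hence left adjoint to the composite of the corresponding right adjoints, namely $\operatorname{Ran}\circ(\Cond_\upkappa\into\PSh{\CHaus_\upkappa})\circ\yo$, where the right adjoint $\operatorname{Ran}$ of $\rho$ is given by $\operatorname{Ran}(G)(D)\cong\PSh{\CHaus_\upkappa}\bigl(\Cmp_\upkappa(\upiota(-),D),G\bigr)$. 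Taking $G=\Cmp_\upkappa(\upiota(-),C)$, full faithfulness of the restricted Yoneda embedding (\cref{lem:rest-yon}) gives $\operatorname{Ran}(G)(D)\cong\Cmp_\upkappa(D,C)$, i.e.\ the composite of right adjoints is the Yoneda embedding of $\Cmp_\upkappa$. Uniqueness of left adjoints (applied objectwise, via the Yoneda lemma in the locally small category $\Cmp_\upkappa$) then yields $L\cong\cmpuly{-}\circ a\circ\rho$, and with this repair your argument goes through.
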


\begin{proof}
    Using pointwise computation of limits in functor categories, products in the large category $\PSh{\Cmp_\upkappa}$ exist and the assignment $F\mapsto F(\ast)$ commutes with finite products. We claim that $h(-)$ preserves finite products as well. 
    
    \smallskip

    We derive this from the explicit construction of $hX$ as $X/R$, where $R$ is the smallest equivalence relation on $X$ that is closed as a subset of the compactly generated product $X\times_{\catstyle{CG}} X$. We will just write $\times$ and assume all products to be taken in $\catstyle{CG}$ or equivalently in $\CGWH$. Let $X, Y$ therefore be two compactly generated spaces and let $R_X, R_Y$ and $R_{X\times Y}$ be the smallest closed equivalence relation on $X$, $Y$ and $X\times Y$ respectively. For every $y \in Y$ there exists an inclusion $\upiota_y: X\times X \into X\times Y \times X \times Y$, mapping $(x_1, x_2)$ to $(x_1, y, x_2, y)$. By continuity of this inclusion, $\upiota_y^{-1}(R)$ is closed in $X\times X$ and can be seen to form an equivalence relation on $X$. Thus it contains $R_X$. The same argument applies to the corresponding inclusions of $Y\times Y$ and thus, by transitivity, if $x_1\sim_X x_2$ and $y_1\sim_Y y_2$, then $(x_1, y_1) \sim_{X\times Y} (x_2, y_1) \sim_{X\times Y} (x_2, y_2)$. We derive that $R_X\times R_Y$ embeds naturally into $R_{X\times Y}$, and as the product of two closed sets it will be closed in $X\times X\times Y\times Y\cong X\times Y\times X\times Y$. Because $R_{X\times Y}$ is by definition the smallest such relation, we obtain that $R_X\times R_Y \cong R_{X\times Y}$ and hence $h(X\times Y)\cong hX\times hY$.
    
    \smallskip

    Finally, the compactology of $hF(\ast)$ is given by sets of the form $\bigcup_{i=1}^n\ran\upphi_i$ for $\upphi_i\colon \yo(K)\to F$ and $K\in\CHaus_\upkappa$. Again by pointwise computation of limits it follows that for presheaves $F$ and $G$, the compactology of the product $h(F\times G)(\ast)$ contains the relevant cylinder sets and is bounded by these and hence agrees with the compactology of the product $hF(\ast)\times hG(\ast)$.
\end{proof}

\begin{prop}\label{prop:cmp-cart-clos}
    The category of ($\upkappa$-)compactological sets is cartesian closed and the internal hom of $\CGWH$ is preserved under the inclusion $\CGWH\into\Cmp$.
\end{prop}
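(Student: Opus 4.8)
The plan is to obtain both statements from the reflection theorem of Day: a full reflective subcategory of a cartesian closed category whose reflector preserves finite products is again cartesian closed, is an \emph{exponential ideal}, and has internal hom computed in the ambient category. For the cartesian closedness of $\Cmp_\upkappa$ I would take the ambient category to be the topos $\Cond_\upkappa$, which is cartesian closed as a category of sheaves on the small site $\CHaus_\upkappa$. By \cref{thm:cmp-condqs} we have $\Cmp_\upkappa\cong\qsCond_\upkappa$, and by \cref{lem:qs-refl} (equivalently \cref{cor:cmp-refl}) the subcategory $\qsCond_\upkappa$ is reflective in $\Cond_\upkappa$ with a reflector preserving finite products. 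Day's theorem then shows that $\qsCond_\upkappa$, and hence $\Cmp_\upkappa$, is cartesian closed, with $[C,D]$ the restriction of the exponential formed in $\Cond_\upkappa$. The unrestricted case follows verbatim with $\Cond$ in place of $\Cond_\upkappa$, using the lift of the adjunction to the directed colimits recorded in \cref{cor:cmp-refl}. More intrinsically one may instead run the argument inside the (large) presheaf category $\PSh{\Cmp_\upkappa}$: by the totality of \cref{prop:cmp-total} the category $\Cmp_\upkappa$ is reflective therein, and \cref{lem:cmp-yon-ref-prod} supplies exactly the finite-product preservation Day's theorem needs; the only thing to watch here is the size of $\PSh{\Cmp_\upkappa}$, handled by the conventions surrounding \cref{prop:cmp-total}.

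For the second assertion I would argue directly rather than invoke Day again, since one short computation both proves the relevant exponential exists and identifies it. Fix $Y,Z\in\CGWH$ and recall that $\CGWH$ is cartesian closed \cite{CGWH}, with internal hom $[Y,Z]_{\CGWH}$. Using the adjunction $\topuly-\dashv\upiota$ of \cref{CGWH-COMPL-ADJ}, the preservation of finite products by $\topuly-$ (\cref{lem:uly-cont}), and the identity $\topuly{\upiota(-)}=\id_{\CGWH}$, one computes for arbitrary $A\in\Cmp$:
\begin{align*}
\Cmp\bigl(A\times\upiota Y,\ \upiota Z\bigr)
&\cong \CGWH\bigl(\topuly{A\times\upiota Y},\ Z\bigr)
\cong \CGWH\bigl(\topuly A\times\topuly{\upiota Y},\ Z\bigr)\\
&\cong \CGWH\bigl(\topuly A\times Y,\ Z\bigr)
\cong \CGWH\bigl(\topuly A,\ [Y,Z]_{\CGWH}\bigr)
\cong \Cmp\bigl(A,\ \upiota[Y,Z]_{\CGWH}\bigr),
\end{align*}
all isomorphisms being natural in $A$. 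By the Yoneda lemma this exhibits $\upiota[Y,Z]_{\CGWH}$ as the exponential $[\upiota Y,\upiota Z]_{\Cmp}$, so $\upiota\colon\CGWH\into\Cmp$ preserves internal homs, as claimed. (One can read the same conclusion off Day's theorem applied to the reflective inclusion $\upiota$ with reflector $\topuly-$: since $\topuly-$ preserves finite products, $\CGWH$ is an exponential ideal of $\Cmp$.)

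The routine points I have suppressed are the assembly of the displayed bijections into a natural isomorphism $\Cmp(-\times\upiota Y,\upiota Z)\cong\Cmp(-,\upiota[Y,Z]_{\CGWH})$ and the bookkeeping needed to pass to the unrestricted category. The only genuine obstacle is the set-theoretic size issue behind the first assertion: the totality-based construction lives in the large category $\PSh{\Cmp_\upkappa}$, where Day's theorem must be applied with the care described in \cref{prop:cmp-total}. Taking the honest topos $\Cond_\upkappa$ as the ambient category sidesteps this entirely, which is why I would make that the primary route and treat the totality argument as a remark.
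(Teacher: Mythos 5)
Your proposal is correct, but it takes a genuinely different route from the paper on the first assertion. The paper never invokes Day's reflection theorem; instead it works intrinsically with $\Cmp_\upkappa$, citing Wood's characterization of cartesian closed total categories and the adjoint functor theorem for total categories of Street--Walters: using the explicit reflector $\PSh{\Cmp_\upkappa}\to\Cmp_\upkappa$ of \cref{prop:cmp-total} and its preservation of finite products (\cref{lem:cmp-yon-ref-prod}), it shows that $C\times-$ preserves all (even large) colimits and then extracts a right adjoint; your ``alternative route'' through $\PSh{\Cmp_\upkappa}$ is essentially a repackaging of exactly this argument, while your primary route through the topos $\Cond_\upkappa$ with the product-preserving reflector of \cref{lem:qs-refl}/\cref{cor:cmp-refl} is shorter and buys something extra for free: it exhibits $\qsCond_\upkappa$ as an exponential ideal of $\Cond_\upkappa$, which is precisely the content of the paper's subsequent \cref{lem:yo-internal-hom}, so your argument subsumes that lemma as well. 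Conversely, the paper's approach buys two things: the stronger intermediate fact that $C\times-$ preserves large colimits, and a treatment of the unrestricted category $\Cmp$ that only needs compatibility of the internal homs $[C,-]\colon\Cmp_\upkappa\to\Cmp$ along the directed colimit $\Cmp\cong\colim_\upkappa\Cmp_\upkappa$, rather than cartesian closedness of the large category $\Cond$. That last point is the one soft spot in your write-up: ``follows verbatim with $\Cond$ in place of $\Cond_\upkappa$'' presupposes that $\Cond$ itself is cartesian closed, which is true but is nowhere established in the paper (it is only implicitly assumed later, in \cref{lem:yo-internal-hom}); if you want the unrestricted case to rest only on what is proved, you should either import the paper's colimit-compatibility argument or justify the internal hom of $\Cond$ by a cardinal-stabilization argument. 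On the second assertion your direct computation
\[
\Cmp\bigl(A\times\upiota Y,\upiota Z\bigr)\cong\CGWH\bigl(\topuly A\times Y,Z\bigr)\cong\Cmp\bigl(A,\upiota[Y,Z]_{\CGWH}\bigr)
\]
is the same chain of adjunctions the paper uses (\cref{CGWH-COMPL-ADJ}, \cref{lem:uly-cont}), but you run it over all $A\in\Cmp$ and conclude by Yoneda, whereas the paper tests only against $K\in\CHaus$ and then appeals to density of $\CHaus$ in $\Cmp$; your version is slightly cleaner, since it establishes the universal property of $\upiota[Y,Z]_{\CGWH}$ outright and does not even require the cartesian closedness of $\Cmp$ proved in the first part.
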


\begin{proof}
    By Wood \cite[Thm 9]{Wood} a total category is cartesian closed if and only if the left adjoint to the Yoneda embedding preserves finite products. The direction that we need is the easier one to show and hence we will repeat Wood's argument here. We aim to apply the adjoint functor theorem for total categories, see \cite[p.~372]{StreetWaltersYoneda}, to show that the product functor $C \times -$ admits a right adjoint for any $C\in\Cmp_\upkappa$. For this we use that we may compute colimits of compactological sets in presheaves and reflect them down using the left adjoint $L\colon \PSh\Cmp \to \Cmp$. Now, using that limits and colimits in functor categories are computed pointwise and the binary product in $\Set$ is cocontinuous, we obtain that $\yo(C)\times \colim_i \yo(C_i) \cong \colim_i \yo(C)\times \yo(C_i)$ for any diagram $i\mapsto C_i$. Using that $L$ preserves finite products according to \cref{lem:cmp-yon-ref-prod} we get
    \begin{align*}
        C\times \colim_i C_i &\cong L(\yo(C))\times L(\colim_i \yo(C_i))
        \cong L(\yo(C)\times \colim_i \yo(C_i)) \\
        &\cong L(\colim_i \yo(C)\times \yo(C_i))
        \cong \colim_i L(\yo(C))\times L(\yo(C_i)) \cong \colim_i C\times C_i.
    \end{align*}
    and accordingly $C\times -$ is cocontinuous. As $L$ preserves potentially large colimits, $C\times -$ preserves all existing potentially large colimits as well. The aforementioned version of the adjoint functor theorem (\cite[p.~372]{StreetWaltersYoneda} or \cite[Thm 1]{Wood}) now says that any such functor out of a total category preserving all (existing) large colimits admits a right adjoint and we may derive cartesian closedness of $\Cmp_\upkappa$ for any uncountable regular cardinal $\upkappa$.

    \smallskip

    To see that $\Cmp$ admits an internal hom as well, observe first that $\Cmp$ is obtained as the (class-sized) union of all $\Cmp_\upkappa$ and all limits and colimits may be computed within $\Cmp_\upkappa$ for $\upkappa$ large enough. Now for $\upkappa<\upkappa'$ and the reflective adjunction $\upiota\colon \Cmp_\upkappa \adjarrow \Cmp_{\upkappa'} :\!\uppi$, we have
    \begin{align*}
        \Cmp_{\upkappa'}(\upiota(C), \upiota([C', C''])) &\cong \Cmp_\upkappa(C, [C', C''])
        \cong \Cmp_\upkappa(C\times C', C'') \\
        &\cong \Cmp_{\upkappa'}(\upiota(C)\times\upiota(C'), \upiota(C''))
        \cong \Cmp_{\upkappa'}(\upiota(C), [\upiota(C'), \upiota(C'')]),
    \end{align*}
    using fully faithfulness and commutativity with finite products of $\upiota$. 

    \smallskip

    We may deduce that $\upiota([C', C'']) \cong [\upiota(C'), \upiota(C'')]$ holds whenever $\upkappa$ is greater than the cardinality of $[\upiota(C'), \upiota(C'')]$, as in this case $[\upiota(C'), \upiota(C'')]$ is generated by $\upkappa$-small compact spaces. In particular, for any $C\in\Cmp$ one can construct compactible internal hom functors $[C, -]\colon \Cmp_\upkappa\to\Cmp$ for all $\upkappa$, which yield an internal hom $[C, -]\colon \Cmp\to\Cmp$ by the universal property of $\Cmp\cong\colim_\upkappa\Cmp_\upkappa$. That this functor is right adjoint to the product functor $-\times C$ then follows by computation in $\Cmp_\upkappa$ for $\upkappa$ large enough.
    
    \smallskip

    That the internal hom of $\CGWH$ is preserved by the inclusion $\CGWH\into\Cmp$ finally follows by the following chain of isomorphisms:
    \begin{align*}
        \Cmp(\upiota(K), \upiota([X, Y])) &\cong \CGWH(K, [X, Y])
            \cong \CGWH((\upiota(K\times X))_{\operatorname{top}}, Y) \\
            &\cong \Cmp(\upiota(K\times X) \cong \upiota(K)\times \upiota(X), \upiota(Y))
            \cong \Cmp(\upiota(K), [\upiota(X), \upiota(Y)]),
    \end{align*}
    for $K\in\CHaus$ and $X, Y\in\CGWH$. 
    Besides the adjunction of \cref{CGWH-COMPL-ADJ}, this uses the continuity of $\upiota$ by virtue of being a right ajoint and the canonical adjunction defining cartesian closure.

    \smallskip

    By density of $\CHaus$ in $\Cmp$, this chain suffices to show that $[\upiota(X), \upiota(Y)]\cong\upiota([X, Y])$ holds.
\end{proof}

\begin{lem}\label{lem:yo-internal-hom}
    The embedding $\yo: \Cmp\into\Cond$ preserves the internal hom.
    In particular, the internal hom between quasiseparated condensed sets is quasiseparated.
\end{lem}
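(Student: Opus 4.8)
The plan is to derive the statement from the reflective adjunction $\cmpuly-\dashv\yo$ between $\Cond_\upkappa$ and $\Cmp_\upkappa$ furnished by \cref{cor:cmp-refl}, together with the cartesian closedness of both categories. Write $L:=\cmpuly-$ for the reflector and $R:=\yo$ for the fully faithful inclusion. Since $R$ is fully faithful the counit $LR\cong\id_{\Cmp_\upkappa}$ is an isomorphism, and by \cref{cor:cmp-refl} the functor $L$ preserves finite products. Moreover $\Cmp_\upkappa$ is cartesian closed by \cref{prop:cmp-cart-clos}, while $\Cond_\upkappa$ is a Grothendieck topos and hence cartesian closed as well. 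These are precisely the ingredients of the classical exponential-ideal argument: a reflective subcategory of a cartesian closed category whose reflector preserves finite products is an exponential ideal, and its inclusion preserves internal homs.

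Concretely, I would fix $A,B\in\Cmp_\upkappa$ and verify $[\yo A,\yo B]\cong\yo[A,B]$ by testing against an arbitrary $X\in\Cond_\upkappa$ and invoking the Yoneda lemma. Using in turn cartesian closedness of $\Cond_\upkappa$, the adjunction $L\dashv R$, product preservation of $L$ together with $LR\cong\id$, cartesian closedness of $\Cmp_\upkappa$, and once more $L\dashv R$, one obtains a chain of isomorphisms
\begin{align*}
\Cond_\upkappa\bigl(X,[\yo A,\yo B]\bigr)
&\cong\Cond_\upkappa\bigl(X\times\yo A,\yo B\bigr)
\cong\Cmp_\upkappa\bigl(L(X\times\yo A),B\bigr)\\
&\cong\Cmp_\upkappa\bigl(LX\times A,B\bigr)
\cong\Cmp_\upkappa\bigl(LX,[A,B]\bigr)
\cong\Cond_\upkappa\bigl(X,\yo[A,B]\bigr),
\end{align*}
each step being natural in $X$. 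The Yoneda lemma then yields the desired isomorphism $[\yo A,\yo B]\cong\yo[A,B]$, showing that $\yo$ preserves the internal hom.

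For the final clause I would invoke the equivalence $\Cmp_\upkappa\cong\qsCond_\upkappa$ of \cref{thm:cmp-condqs}: every quasiseparated condensed set is isomorphic to $\yo C$ for some compactological set $C$, so the internal hom of two quasiseparated condensed sets is $\yo$ of a compactological internal hom and is therefore again quasiseparated. The unrestricted statement for $\yo\colon\Cmp\into\Cond$ then follows by passing to the directed colimit over $\upkappa$, using that the adjunction $\cmpuly-\dashv\yo$ lifts to $\Cmp$ and $\Cond$ (\cref{cor:cmp-refl}) and that internal homs are compatible with the colimit presentation exactly as in the last part of the proof of \cref{prop:cmp-cart-clos}.

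I do not expect a serious obstacle here; the only points requiring genuine attention are that the displayed isomorphisms are honestly natural in $X$ (so that Yoneda applies) and the bookkeeping of size restrictions when transferring the result from $\Cond_\upkappa$ to the unrestricted $\Cond$.
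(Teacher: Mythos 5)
Your proof is correct, but it runs along a genuinely different track than the paper's. The paper also performs a chain of adjunction isomorphisms, yet it never invokes the reflector: it tests the two candidate internal homs only against objects $\yo(C)$ in the image of $\Cmp$, using full faithfulness of $\yo$, cartesian closedness of both categories, and the fact that $\yo$ (being a right adjoint) preserves finite products, and then concludes by density of $\CHaus$ in $\Cond$ --- the natural isomorphism on representables $\underline{K}=\yo(\upiota(K))$ already determines the sheaves. You instead test against an \emph{arbitrary} $X\in\Cond_\upkappa$ and transfer through the reflection $\cmpuly-\dashv\yo$, which is the classical exponential-ideal argument: it requires the nontrivial input that the reflector $\cmpuly-$ preserves finite products (\cref{lem:qs-refl} via \cref{cor:cmp-refl}, imported from Clausen--Scholze) together with $\cmpuly{\yo(-)}\cong\id$, but in exchange it delivers the isomorphism $[\yo A,\yo B]\cong\yo[A,B]$ by one application of the ordinary Yoneda lemma, with no density step. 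So the trade-off is: the paper's route needs less about the reflector but must justify that checking on the dense subcategory $\CHaus$ suffices; your route needs product preservation of $\cmpuly-$ but is self-contained once that is granted, and it generalizes verbatim to any product-preserving reflective localization. Your handling of the `in particular' clause via \cref{thm:cmp-condqs} and of the unrestricted case by passing to the directed colimit over $\upkappa$ (as in \cref{prop:cmp-cart-clos}) matches what the paper needs and is sound.
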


\begin{proof}
    This follows by the same yoga of adjunctions:
    \begin{align*}
        \Cond(\yo(C), \yo([C', C''])) &\cong \Cmp(C, [C', C''])
        \cong \Cmp(C\times C', C'') \\
        &\cong \Cond(\yo(C)\times\yo(C'), \yo(C''))\\
        &\cong \Cond(\yo(C), [\yo(C'), \yo(C'')]).
    \end{align*}
    Again, density of $\CHaus$ in both categories implies the claim.
\end{proof}

We continue with a sequence of properties of $\Cmp$ and $\Cmp_\upkappa$ illustrating that both are `well-behaved'. As the results do not depend on $\upkappa$ we leave it implicit.

\begin{prop}\label{prop:disj}
    The category $\Cmp$ is (infinitary) disjunctive. Explicitly, in addition to admitting coproducts and all finite limits, coproducts in $\Cmp$ are disjoint and stable under pullback. More explicitly still, for all indexing sets $I$ and elements $j, k, \ell\in I$ with $k\neq\ell$ the canonical inclusions $\upiota_j: C_j\into \coprod_{i\in I} C_i$ are monic, it holds that
    \[
        C_k\times_{\coprod_{i\in I} C_i} C_\ell \cong \varnothing,
    \]
    and
    \[
        \left(\coprod_{i\in I} C_i\right)\times_{C'} C \cong \coprod_{i\in I}\left(C_i\times_{C'} C\right).
    \]
\end{prop}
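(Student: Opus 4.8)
The plan is to reduce all three assertions to the explicit descriptions of the coproduct in \cref{prop:cmp-prod}(1) and of the pullback in \cref{cor:cmp-po}(1), together with the faithfulness of the forgetful functor $\Cmp\to\Set$. The monicity of the inclusions $\upiota_j$ is immediate: its underlying map of sets is the coproduct injection $X_j\hookrightarrow\coprod_{i\in I}X_i$, which is injective and hence monic in $\Set$, and a faithful functor reflects monomorphisms. Disjointness is equally quick: by \cref{cor:cmp-po}(1) the underlying set of $C_k\times_{\coprod_i C_i}C_\ell$ is $\{(a,b)\in X_k\times X_\ell\mid \upiota_k(a)=\upiota_\ell(b)\}$, and since the images of $\upiota_k$ and $\upiota_\ell$ in $\coprod_i X_i$ are disjoint for $k\neq\ell$, this set is empty; as the only compactology on $\varnothing$ is $\{\varnothing\}$, the pullback is $\varnothing$.

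The substance of the proposition is stability under pullback. Writing $h\colon\coprod_i C_i\to C'$ for the structure morphism and setting $f_i:=h\circ\upiota_i$, I would first identify the underlying sets. By \cref{cor:cmp-po}(1) the set underlying $P:=(\coprod_i C_i)\times_{C'}C$ consists of the pairs $(z,x)$ with $h(z)=g(x)$; decomposing $z=\upiota_i(w)$ uniquely and using $h(\upiota_i(w))=f_i(w)$ exhibits a canonical bijection with $\coprod_i\{(w,x)\in X_i\times X\mid f_i(w)=g(x)\}$, which is exactly the set underlying $Q:=\coprod_i(C_i\times_{C'}C)$ --- this is just the extensivity of $\Set$. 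This bijection is induced by the canonical comparison morphism $Q\to P$ assembled from the projections via the universal properties. Since by \cref{DFN-COMP-4} a compactological set is completely determined by its system of plain sets, it then suffices to prove that $\Comp_P$ and $\Comp_Q$ correspond under this bijection; the topologies on their members then agree automatically, so the comparison map becomes an isomorphism in $\Cmp$.

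To match the two compactologies I would argue on generators. By \cref{cor:cmp-po}(1) a generator of $\Comp_P$ is a closed subset of a cylinder $\tilde K\times K$ with $\tilde K\in\Comp_{\coprod_i C_i}$ and $K\in\Comp_C$, and by \cref{prop:cmp-prod}(1) we may write $\tilde K=\bigcup_{j=1}^{n}K_{i_j}$ with the $i_j$ distinct and $K_{i_j}\in\Comp_{i_j}$. The crucial observation is that each component $X_{i_j}$ is clopen in $\coprod_i X_i$, so that $\tilde K\times K=\bigsqcup_{j=1}^{n}(K_{i_j}\times K)$ splits as a topological disjoint union and the product-compactology topology restricts on each piece to the product topology coming from $C_{i_j}$ and $C$ (as computed in \cref{prop:cmp-prod}). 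Hence a closed subset of $\tilde K\times K$ is precisely a finite disjoint union of closed subsets of the $K_{i_j}\times K$; intersecting with the pullback condition identifies each summand with a member of $\Comp_{C_{i_j}\times_{C'}C}$, so its image is a finite union of members of the component pullback compactologies, i.e.\ an element of $\Comp_Q$ by \cref{prop:cmp-prod}(1), and conversely every element of $\Comp_Q$ arises in this way. This gives $\Comp_P=\Comp_Q$ under the bijection and completes the proof.

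I expect the last paragraph to be the main obstacle: the only genuinely non-formal step is the splitting of the cylinder $\tilde K\times K$ into its component cylinders, i.e.\ verifying that ``closed in $\tilde K\times K$'' coincides with ``componentwise closed''. This rests on the clopenness of the components in the coproduct topology together with the identification of the product-compactology topology on a single cylinder with the ordinary product topology established in \cref{prop:cmp-prod}. Everything else is a bookkeeping translation between the explicit formulas, and one could alternatively shorten the underlying-space part by invoking the finite continuity and cocontinuity of $\topuly-$ from \cref{lem:uly-cont}.
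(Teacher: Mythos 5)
Your proposal is correct, and for the part that carries the real content of the proposition it takes a genuinely different route from the paper. The monicity and disjointness arguments essentially coincide with the paper's (the paper additionally remarks that the inclusions $\upiota_j$ are even regular monomorphisms, i.e.\ embeddings, and obtains the empty underlying set of $C_k\times_{\coprod_i C_i}C_\ell$ from continuity and coproduct-preservation of the forgetful functor rather than from \cref{cor:cmp-po}(1), but these are cosmetic differences). For stability under pullback, however, the paper argues abstractly: it invokes the distributivity of finite products over coproducts in $\Cmp$\,---\,which at that point in the text is available because $C\times-$ is cocontinuous by the cartesian closedness established in \cref{prop:cmp-cart-clos} via totality and the adjoint functor theorem\,---\,and then identifies the pullback as the regular subobject of $\bigl(\coprod_i C_i\bigr)\times C\cong\coprod_i(C_i\times C)$ equalizing the two relevant maps, the decomposition of this equalizer along the coproduct being left implicit. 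You instead compute both sides explicitly from \cref{prop:cmp-prod}(1) and \cref{cor:cmp-po}(1): the underlying sets match by extensivity of $\Set$, and the compactologies match because every cylinder $\tilde K\times K$ splits into the clopen pieces $K_{i_j}\times K$, on each of which the compactological topology is the ordinary product topology by \cref{prop:cmp-prod}. Your appeal to \cref{DFN-COMP-4} to conclude that matching the plain-set compactologies suffices for the comparison map to be an isomorphism is sound. The trade-off is clear: your argument is self-contained relative to Section~2 and avoids the totality/cartesian-closedness machinery entirely, at the price of more bookkeeping (in particular, the converse inclusion $\Comp_Q\subseteq\Comp_P$, which you only assert, needs the small extra step of enlarging the $K_j\in\Comp_C$ to a common $K$ and merging summands from the same component); the paper's argument is shorter but only works where it is placed, after the heavier categorical results.
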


\begin{proof}
    Monicity of the canonical inclusions into the coproduct is evident from \cref{prop:cmp-prod}. In fact, the inclusions are regular, i.e., embeddings.
    Using that the forgetful functor $\Cmp\to\Set$ is continuous and preserves coproducts, the underlying set of $C_k\times_{\coprod_{i\in I} C_i} C_l$ is the corresponding pullback of underlying sets, which is empty, as coproducts in $\Set$ are disjoint.

    \smallskip

    Finally, stability under pullback follows by observing that products distribute over coproducts in $\Cmp$ and that the pullback is the regular subobject of this product, which equalizes the two relevant morphisms.
\end{proof}

Below, we recall the definition of regularity of categories in the sense of \cite[Dfn A.5.1]{BorcBourn04} and \cite[p.~134]{BarrGrilletOsdol}, which can be understood as a property providing a well-behaved notion of quotient maps and image factorizations of maps. It is one of the properties making topoi particularly well-behaved and it is closely related to the concept of a \emph{left quasi-abelian category} which is essentially an additive (and) regular category. The only difference is that in a quasi-abelian category every morphism is required to have a cokernel, see \cite[p.~458--459]{HKRW} for more details. The concept dualizes to coregularity and a regular and coregular additive category is precisely a quasi-abelian category.

\begin{dfn}\label{def:regularity}
    A morphism in a category is called a \emph{regular epimorphism} if it is the coequalizer of some parallel pair.
    A category $\catC$ is called \emph{regular} if
    \begin{enumerate}
        \item it admits finite limits,\vspace{2pt}
        \item for any $f\colon C\to D$ in $\catC$ the kernel pair
            \[\begin{tikzcd}
                C\times_D C
                    \ar[r, dashed]
                    \ar[d, dashed]
                    \ar[phantom, dr, "\lrcorner" very near start] &
                C
                    \ar[d, "f"] \\
                C
                    \ar[r, "f"'] &
                D
            \end{tikzcd}\]
            admits a coequalizer $C \onto C/C\times_D C$,\vspace{2pt}
        \item regular epimorphisms are stable under pullback.
    \end{enumerate}
    A functor $F\colon \catC\to\catD$ between regular categories is called \emph{regular} if it preserves finite limits and regular epimorphisms. \hfill\diam
\end{dfn}

It is equivalent to define regularity as the existence of finite limits and a pullback-stable factorization system consisting of regular epimorphisms and all monomorphism. An important example for the above definition is given by the following result, which we cite without proof.

\begin{thm}[\protect{\cite[Thm 3.1]{CGWH}}]\label{thm:cgwh-bireg}
    The category $\CGWH$ is regular and coregular. \qed
\end{thm}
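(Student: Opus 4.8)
The plan is to verify the three axioms of \cref{def:regularity} directly and then to establish their duals so as to obtain coregularity; since $\CGWH$ is not self-dual, the two halves rest on genuinely different topological input. Completeness and cocompleteness of $\CGWH$ are both available — finite products are the $k$-ifications of the usual product topology, equalizers are closed subspaces, and all colimits exist — so axioms \ref{def:regularity}(1) and \ref{def:regularity}(2), together with their duals (finite colimits and equalizers of cokernel pairs), hold at once. It therefore remains only to treat the stability axiom \ref{def:regularity}(3) and its dual. Throughout I would use the identification of regular epimorphisms in $\CGWH$ with surjective quotient maps: a coequalizer is visibly a quotient, and conversely a surjective quotient $q\colon X\onto Z$ onto a cgwh space has a closed kernel pair $R=(q\times q)^{-1}(\Delta_Z)$ — closed by \cite[Prop 2.14]{CGWH} — whose coequalizer is exactly $Z$ by \cite[Cor 2.21]{CGWH}, so $q$ is the coequalizer of its kernel pair. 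Dually, regular monomorphisms are precisely the closed embeddings.

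The heart of regularity is pullback stability of quotient maps. Given a regular epi $q\colon X\onto Y$ and an arbitrary $g\colon Z\to Y$, I would realize the pullback projection $\bar q\colon X\times_Y Z\to Z$ as a restriction of $q\times\id_Z\colon X\times Z\onto Y\times Z$, which is a quotient map by \cite[Prop 2.20]{CGWH}. Concretely, the graph $\Gamma_g=\{(g(z),z)\}$ is the equalizer of $\mathrm{pr}_Y$ and $g\circ\mathrm{pr}_Z$, hence a \emph{closed} subspace of $Y\times Z$ (equalizers in the weak Hausdorff category $\CGWH$ are closed embeddings), and it is \emph{saturated} for $q\times\id_Z$ because $q$ is surjective; moreover $(q\times\id_Z)^{-1}(\Gamma_g)=X\times_Y Z$, and the homeomorphism $\Gamma_g\cong Z$ identifies the restriction with $\bar q$. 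Invoking the elementary fact that the restriction of a quotient map to the preimage of a closed saturated subset is again a quotient map, I conclude that $\bar q$ is a quotient map; being a surjection onto the cgwh space $Z$, it is a regular epimorphism. This gives \ref{def:regularity}(3) and hence regularity, and the factorization of any $f$ through the coequalizer of its kernel pair then yields the pullback-stable (regular epi, mono)-factorization.

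For coregularity I would dualize the categorical bookkeeping but supply the dual topological input. The only nontrivial point is pushout stability of regular monomorphisms, i.e.\ that the pushout $B\to X\cup_A B$ of a closed embedding $A\hookrightarrow X$ along any map $A\to B$ is again a closed embedding. This is the coregular counterpart of the product-of-quotients lemma, and is precisely the good behaviour of closed inclusions in $\CGWH$ recorded in Strickland's notes \cite{CGWH}; combined with completeness (equalizers of cokernel pairs) and the identification of regular monos with closed embeddings, it establishes the dual of \ref{def:regularity}(3) and hence coregularity.

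The main obstacle I anticipate is not the categorical bookkeeping but the careful topology underlying the two stability statements. For regularity one must check that the ``restriction of a quotient to a closed saturated subset'' argument survives the passage to compactly generated spaces; this is painless here precisely because $\Gamma_g$ and $X\times_Y Z$ are closed subspaces (closed subspaces of cgwh spaces are again cgwh with their subspace topology, so no $k$-ification correction intervenes) and $q\times\id_Z$ is an honest topological quotient map by \cite[Prop 2.20]{CGWH}. For coregularity the analogous obstacle is verifying that the pushout of a closed inclusion remains both a closed inclusion and weak Hausdorff; this is the genuinely $\CGWH$-specific input and the reason the coregular half cannot be read off from the regular half by formal duality.
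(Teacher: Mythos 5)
The paper never proves this statement: it is quoted from Strickland's notes as \cite[Thm 3.1]{CGWH} and marked with \qed, the surrounding text saying explicitly that it is cited without proof. So there is no in-paper argument to compare against; what you have written is a reconstruction of the proof the paper delegates to its reference. Your regularity half is correct and essentially complete, and it uses exactly the ingredients the paper itself imports from Strickland when it proves the analogous statements for $\Cmp$ (cf.\ \cref{prop:cmp-equ} and \cref{lem:repi-pb}): regular epimorphisms are identified with surjective quotient maps via the kernel pair $(q\times q)^{-1}(\Delta_Z)$, closed by \cite[Prop 2.14]{CGWH}, and \cite[Cor 2.21]{CGWH}; pullback stability follows by exhibiting the pullback projection as the restriction of the quotient map $q\times\id_Z$ (a quotient by \cite[Prop 2.20]{CGWH}) to the full preimage of the closed graph $\Gamma_g$, where the Munkres-style ``restriction to the preimage of a closed set'' fact applies and no k-ification correction intervenes because closed subspaces of compactly generated spaces are compactly generated. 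That argument is sound.

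The coregularity half, by contrast, is an outline rather than a proof, and it has two soft spots. First, ``dually, regular monomorphisms are precisely the closed embeddings'' is not a formal dualization (as you yourself note, $\CGWH$ is not self-dual): the direction ``equalizer $\Rightarrow$ closed embedding'' follows from weak Hausdorffness, but the converse --- that a closed embedding $A\into X$ is the equalizer of its cokernel pair $X\rightrightarrows X\sqcup_A X$ --- already requires knowing that the pushout along a closed inclusion is formed without extra collapsing, i.e.\ that the compactly generated pushout is weak Hausdorff and the two copies of $X$ meet exactly in $A$. Second, that same statement --- the pushout of a closed inclusion along an arbitrary map is again a closed inclusion with cgwh pushout --- is the entire content of the dual stability axiom \ref{def:regularity}(3), and you assert it only with a generic pointer to Strickland's notes instead of proving it or citing a specific proposition. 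Given that the paper's own policy is to cite Strickland for the whole theorem this is defensible, but be aware that as a standalone proof your coregular half rests on an unproven (true) lemma which is doing double duty: it is needed both for the stability axiom and for the identification of regular monomorphisms that the stability axiom is stated in terms of.
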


We will transfer this statement to $\Cmp$ using the faithful functor $(-)_{\operatorname{top}}\colon \Cmp\onto\CGWH$, which exhibits $\Cmp$ as a concrete category of topological spaces. As $\Cmp$ admits all limits and colimits, regularity corresponds to showing pullback stability of regular epimorphisms.

\begin{lem}\label{lem:repi-pb}
    The category $\Cmp$ of compactological sets is regular.
\end{lem}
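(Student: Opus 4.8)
Conditions \ref{def:regularity}(1) and \ref{def:regularity}(2) are already guaranteed by \cref{cor:cmp-bicomp}, so the entire content of the lemma is the pullback stability \ref{def:regularity}(3), as noted in the paragraph preceding the statement. My plan is to first record a concrete description of the regular epimorphisms in $\Cmp$ and then to verify that each of its three constituent properties is inherited by pullbacks. Concretely, I claim that a morphism $q\colon A\to B$ is a regular epimorphism in $\Cmp$ if and only if (i) $q$ is surjective on underlying sets, (ii) $\Comp_B=q(\Comp_A):=\{q(K)\mid K\in\Comp_A\}$, and (iii) $\topuly q$ is a quotient map in $\CGWH$, i.e.\ $\topuly B$ carries the final topology along $q$. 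To establish this characterization I would use the general fact that a regular epimorphism is the coequalizer of its own kernel pair and then compute that coequalizer with \cref{prop:cmp-equ}(2). The point is that the kernel-pair relation $R=\{(a,a')\mid q(a)=q(a')\}=(q\times q)^{-1}(\Delta_{\topuly B})$ is already a closed equivalence relation: the diagonal $\Delta_{\topuly B}$ is closed in the compactly generated square by weak Hausdorffness (\cite[Prop 2.14]{CGWH}), and by \cref{prop:cmp-prod}(2) this square is exactly $\topuly{B\times B}$, so continuity of $q\times q$ makes $R$ closed. Hence in \cref{prop:cmp-equ}(2) the smallest closed equivalence relation $E$ containing $R$ equals $R$, and the coequalizer returns precisely $B$ with compactology $q(\Comp_A)$ and quotient topology; comparing with an arbitrary $q$ satisfying (i)--(iii) via the induced bijection $A/R\to B$ yields both directions.

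With the characterization in hand, let $f\colon C\to D$ be a regular epimorphism and $g\colon D'\to D$ an arbitrary morphism, and form the pullback $P:=C\times_D D'$ with projections $p\colon P\to C$, $q\colon P\to D'$ as in \cref{cor:cmp-po}(1), so that $P=\{(c,d')\mid f(c)=g(d')\}$ carries the subset compactology of $C\times D'$. I would verify (i)--(iii) for $q$. For (i), given $d'\in X_{D'}$, surjectivity of $f$ (which holds by the characterization applied to $f$) yields $c$ with $f(c)=g(d')$, whence $(c,d')\in P$ and $q(c,d')=d'$. For (ii), fix $L\in\Comp_{D'}$; then $g(L)\in\Comp_D=f(\Comp_C)$ by property (ii) of the regular epimorphism $f$, so $g(L)=f(K)$ for some $K\in\Comp_C$. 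The set $M:=(K\times L)\cap P$ is the locus in the compact space $K\times L$ where the two continuous maps $(c,\ell)\mapsto f(c)$ and $(c,\ell)\mapsto g(\ell)$ into the compact Hausdorff space $g(L)=f(K)$ agree, hence is closed; by \cref{cor:cmp-po}(1) it therefore lies in $\Comp_P$. For every $\ell\in L$ we have $g(\ell)\in f(K)$, producing $c\in K$ with $(c,\ell)\in M$, so $q(M)=L$; thus $\Comp_{D'}\subseteq q(\Comp_P)$, and the reverse inclusion is automatic since $q$ is a morphism, giving $\Comp_{D'}=q(\Comp_P)$.

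For (iii), I would invoke \cref{lem:uly-cont}: since $\topuly-$ is finitely continuous it sends $P$ to the pullback $\topuly C\times_{\topuly D}\topuly D'$ in $\CGWH$, and since it is cocontinuous it sends the regular epimorphism $f$ to a regular epimorphism $\topuly f$. As $\CGWH$ is regular by \cref{thm:cgwh-bireg}, the pullback $\topuly q$ of $\topuly f$ along $\topuly g$ is again a regular epimorphism, i.e.\ a quotient map, which is precisely (iii). Conditions (i)--(iii) then identify $q$ as a regular epimorphism by the characterization, proving \ref{def:regularity}(3) and hence the lemma.

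I expect the main obstacle to be the compactology clause (ii). This is the one step where the topological input transported from $\CGWH$ does not suffice: one must use the explicit pullback compactology of \cref{cor:cmp-po}(1) together with the \emph{compactological surjectivity} $\Comp_D=f(\Comp_C)$ of a regular epimorphism. This is exactly why the characterization of regular epimorphisms must be proved first (so that $f$ can be assumed compactologically surjective), and why the cylinder set $M=(K\times L)\cap P$, rather than any purely topological construction, is what does the work.
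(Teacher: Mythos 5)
Your proof is correct and takes essentially the same route as the paper: the topological part is transported through the forgetful functor $\topuly-$ using \cref{lem:uly-cont} and regularity of $\CGWH$ (\cref{thm:cgwh-bireg}), and the compactological part rests on the very same fiber-product construction, since your cylinder $M=(K\times L)\cap P$ is exactly the paper's $K_Z\times_{g(K_Z)}K_X$. The only difference is that you explicitly prove the characterization of regular epimorphisms in $\Cmp$ (via \cref{prop:cmp-equ}(2) and closedness of the kernel-pair relation in the compactly generated square), a step the paper compresses into the phrase ``by regularity of $f$''.
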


\begin{proof}
    By cocontinuity and finite continuity of the forgetful functor $\topuly -\colon \Cmp\to\CGWH$, both the pullback of a coquealizer and the coequalizer of a pullback are preserved and by regularity of $\CGWH$ and the general fact that the pullback of a kernel pair is the kernel pair of the pullback, it follows that the pullback of a regular epimorphism of compactological sets is a regular epimorphism in $\CGWH$. Accordingly it only remains to show that it is a regular epimorphism with regard to the compactology. For this, we fix the following pullback square with a regular epimorphism $f\colon X\onto Y$:
    \[\begin{tikzcd}
        Z\times_Y X
            \ar[r, dashed, "g'"]
            \ar[d, dashed, "f'"']
            \ar[phantom, dr, "\lrcorner" very near start] &
        X
            \ar[d, two heads, "f"] \\
        Z
            \ar[r, "g"] &
        Y.
    \end{tikzcd}\]
    We need to show that $f'(\Comp_{Z\times_Y X}) = \Comp_Z$. For this, fix an arbitrary $K_Z\in \Comp_Z$.
    By definition, $g(K_Z)\in\Comp_Y$ and there exists some $K_X\in\Comp_X$ with $f(K_X) = g(K_Y)$ by regularity of $f$. Using functoriality and continuity of the pullback, we obtain an embedding $K_Z\times_{g(K_Z)} K_X \into Z\times_Y X$ and using that $\upiota\colon \CHaus\into\Cmp$ preserves limits, $K_Z\times_{g(K_Z)} K_X$ lies in the image of $\upiota$. In particular, it must be an element of the compactology of $Z\times_Y X$ and by the fact that $K_Z\times_{g(K_Z)} K_X \onto K_Z$ is a regular epimorphism of cgwh-spaces, we obtain the desired equality $f'(K_Z\times_{g(K_Z)} K_X) = K_Z$.
\end{proof}

% Fleck der Schande

\begin{lem}\label{lem:rmono-po}
    The category $\Cmp$ of compactological sets is coregular.
\end{lem}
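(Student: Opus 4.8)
The plan is to verify the dual of \cref{def:regularity}. Since $\Cmp$ is complete and cocomplete by \cref{cor:cmp-bicomp}, it admits all finite colimits and every cokernel pair admits an equalizer, so the only substantial point is that regular monomorphisms are stable under pushout. I first record a description of regular monomorphisms. By \cref{prop:cmp-equ}(1) the equalizer of a parallel pair is a subset of the source that is closed in the associated cgwh-space and carries the subspace compactology $\{K\in\Comp\mid K\subseteq(\text{the subset})\}$; conversely, a closed subspace with the subspace compactology is the equalizer of its own cokernel pair, because $\topuly{-}$ is cocontinuous and finitely continuous (\cref{lem:uly-cont}), so it sends this cokernel pair and equalizer to the corresponding ones in $\CGWH$, where the closed embedding is a regular monomorphism by coregularity of $\CGWH$ (\cref{thm:cgwh-bireg}). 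Hence a morphism of $\Cmp$ is a regular monomorphism precisely when $\topuly{-}$ sends it to a closed embedding and it induces the subspace compactology on its image.

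Now fix a regular monomorphism $m\colon A\into C$ and an arbitrary morphism $a\colon A\to B$, and form the pushout $P=B\sqcup_A C$ with coprojections $j_B\colon B\to P$ and $j_C\colon C\to P$, so that $j_Cm=j_Ba$; I must show that $j_B$ is a regular monomorphism. On underlying spaces this is immediate: by \cref{lem:uly-cont} the functor $\topuly{-}$ is cocontinuous, so $\topuly{P}$ is the pushout in $\CGWH$ of the closed embedding $\topuly{m}$ along $\topuly{a}$, and coregularity of $\CGWH$ (\cref{thm:cgwh-bireg}) makes $\topuly{j_B}$ a closed embedding. The extra ingredient I will need is that this pushout square is also a pullback of underlying sets, equivalently that $j_C^{-1}(j_B(B))=m(A)$. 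This holds because $m$ is a closed embedding: realizing $\topuly{P}$ as the quotient of $\topuly{B}\sqcup\topuly{C}$ by the smallest \emph{closed} equivalence relation $E$ (cf.\ \cref{prop:cmp-equ}(2)), one checks, using closedness of diagonals and of closed embeddings under products in $\CGWH$ \cite[Prop 2.14]{CGWH}, that the set-theoretic equivalence relation generated by the gluing data $\{(a(x),m(x))\mid x\in A\}$ is already closed; hence it equals $E$, its classes meeting $B$ are exactly $\{y\}\cup m(a^{-1}(y))$, and so $j_C(c)\in j_B(B)$ forces $c\in m(A)$. Establishing this closedness is the main obstacle; the rest is formal bookkeeping.

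It remains to identify the compactology. By \cref{cor:cmp-po}(2) we have $\Comp_P=\uppi(\Comp_{B\sqcup C})$ for the quotient map $\uppi\colon B\sqcup C\to P$, and by \cref{prop:cmp-prod}(1) every element of $\Comp_{B\sqcup C}$ has the form $L^B\sqcup L^C$ with $L^B\in\Comp_B$ and $L^C\in\Comp_C$. The inclusion $j_B(\Comp_B)\subseteq\{K\in\Comp_P\mid K\subseteq j_B(B)\}$ is clear, since $j_B(L^B)=\uppi(L^B\sqcup\varnothing)\in\Comp_P$. For the reverse inclusion take $K\in\Comp_P$ with $K\subseteq j_B(B)$ and write $K=\uppi(L^B\sqcup L^C)=j_B(L^B)\cup j_C(L^C)$. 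Then $j_C(L^C)\subseteq K\subseteq j_B(B)$, so $L^C\subseteq m(A)$ by the pullback property above, whence $L^C=m(K_A)$ with $K_A:=m^{-1}(L^C)\in\Comp_A$ because $m$ carries the subspace compactology. As $a$ is a morphism of compactological sets we have $a(K_A)\in\Comp_B$, and using $j_Cm=j_Ba$ we get $j_C(L^C)=j_C(m(K_A))=j_B(a(K_A))$. Therefore
\[
    K=j_B(L^B)\cup j_B(a(K_A))=j_B\bigl(L^B\cup a(K_A)\bigr)\in j_B(\Comp_B),
\]
since $\Comp_B$ is closed under finite unions. Thus $j_B$ induces the subspace compactology on its closed image, so by the characterization of the first paragraph $j_B$ is a regular monomorphism. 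This establishes stability of regular monomorphisms under pushout and completes the proof that $\Cmp$ is coregular.
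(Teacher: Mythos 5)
Your proof is correct and follows essentially the same route as the paper's: reduce coregularity to pushout-stability of regular monomorphisms, transfer the topological statement to $\CGWH$ via the cocontinuous functor $\topuly{-}$ and \cref{thm:cgwh-bireg}, and then identify the compactology on the pushout using \cref{cor:cmp-po} together with the fact that the regular monomorphism carries the subspace compactology. The only notable difference is that you explicitly verify that the set-theoretically generated equivalence relation is already closed (so that the pushout square is a pullback on underlying sets), a point the paper's proof invokes only implicitly when computing $(f')^{-1}(g'(K_Y)) = g(f^{-1}(K_Y))$ from ``$f$ and $f'$ restrict to homeomorphisms onto their image''; your treatment is, if anything, more complete at that step.
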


\begin{proof}
    Again it suffices to show that regular monomorphisms are stable under pushout.
    The proof of this is similar to the pullback stability of regular epimorphisms. We fix the following pushout for a given regular monomorphism $f\colon X\into Y \in\Cmp$.
    \[\begin{tikzcd}
        X
            \ar[r, "g"]
            \ar[d, "f"', hook]
            \ar[phantom, dr, "\ulcorner" very near end] &
        Z
            \ar[d, dashed, "f'"] \\
        Y
            \ar[r, dashed, "g'"'] &
        Y \sqcup_X Z
    \end{tikzcd}\]
    By the same argument as above, we know that $\topuly{f'}$ is a regular monomorphism in $\CGWH$ and it remains to show that any $K_Z\in\Comp_Z$ is of the form $f^{-1}(K_{Y\sqcup_X Y})$ for $K_{Y\sqcup_X Z}\in \Comp_{Y\sqcup_X Z}$. From \cref{cor:cmp-po} we know that $\Comp_{Y\sqcup_X Z}$ is generated by $g'(\Comp_Y) \cup f'(\Comp_Z)$. Because $f'$ is an embedding of $\CGWH$ spaces, $(f')^{-1}\circ f' = \id_{Z}$, and we only need to show that $(f')^{-1}(g'(K_Y))\in\Comp_Z$ holds for all $K_Y\in\Comp_Y$. We choose $K_Y \in\Comp_Y, K_X=f^{-1}(K_Y) \in \Comp_X, K_Z = g(K_X) \in \Comp_Z$ and using the fact that $f$ and hence $f'$ restrict to homeomorphisms onto their image, we have $(f')^{-1}(g'(K_Y)) = (f')^{-1}(g'(K_Y) \cap \ran f') = g(f^{-1}(K_Y \cap \ran f)) = g(f^{-1}(K_Y)) = K_Z$.
\end{proof}

\smallskip

We conclude the list of `exactness' properties we have proven thus far (regularity, disjunctivity, etc.) with the following property of the same kind:

\begin{prop}\label{prop:cmp-coherent}
    The category $\Cmp$ of compactological sets is a well-powered infinitary coherent category. Explicitly it is a regular category such that for every compactological set $C\in\Cmp$ the poset $\operatorname{Sub}(C)$ of isomorphism classes of monomorphisms $C'\into C$
    \begin{enumerate}
        \item is small
        \item and admits arbitrary unions,
        \item which are stable under pullbacks.
    \end{enumerate}
\end{prop}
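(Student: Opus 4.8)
The category is already regular by \cref{lem:repi-pb}, so the plan is to establish the three remaining assertions: well-poweredness, the existence of arbitrary unions of subobjects, and their stability under pullback. I would first pin down the monomorphisms of $\Cmp$. Since the composite forgetful functor $\Cmp\to\Set$ is faithful, every map that is injective on underlying sets is monic; conversely, given a monomorphism $m\colon C'\into C$ and two points $a,b$ of $C'$ with $m(a)=m(b)$, evaluating $m$ against the two morphisms $\ast\to C'$ selecting $a$ and $b$ (singletons lie in every compactology by \ref{DFN-COMP-0}(4), so each point defines a morphism from the one-point compactological set) forces $a=b$, so $m$ is injective on underlying sets. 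Hence a subobject of $C=(X,\Comp)$ is represented by a subset of $X$ endowed with a compactology, and since a compactology covers its own underlying set, $\operatorname{Sub}(C)$ injects into $\Pow(\Pow(X))$. This yields smallness, i.e.\ (1).

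For (2) I would use that $\Cmp$ is cocomplete by \cref{cor:cmp-bicomp}, in particular has all coproducts by \cref{prop:cmp-prod}(1), together with the (regular epi, mono)-factorization supplied by regularity. Given a family of subobjects $(m_i\colon C_i\into C)_{i\in I}$, I form the induced map $[m_i]\colon\fcoprod_{i\in I}C_i\to C$ and factor it as $\fcoprod_{i\in I}C_i\onto U\into C$. Then $U$ is the union $\bigvee_{i}C_i$ in $\operatorname{Sub}(C)$: each $m_i$ factors through the coproduct and hence through $U$, so $C_i\leqslant U$; and if $V\into C$ is any subobject dominating all the $C_i$, the compatible maps $C_i\to V$ assemble into $\fcoprod_i C_i\to V$ over $C$, so the diagonal fill-in for the factorization system produces a map $U\into V$ over $C$, exhibiting $U$ as the least upper bound.

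For (3) I would exploit that $\Cmp$ is infinitary disjunctive by \cref{prop:disj} and regular by \cref{lem:repi-pb}. Fixing $f\colon D\to C$, the pullback functor $f^{*}$ preserves coproducts (coproducts are stable under pullback by \cref{prop:disj}), preserves regular epimorphisms (pullback-stability is exactly \cref{def:regularity}(3)), and always preserves monomorphisms. Pulling the factorization $\fcoprod_{i}C_i\onto U\into C$ back along $f$ therefore yields $\fcoprod_{i}f^{*}C_i\onto f^{*}U\into D$, which is again a (regular epi, mono)-factorization of $\fcoprod_i f^{*}C_i\to D$; by uniqueness of such factorizations and the description in (2), $f^{*}\bigl(\bigvee_i C_i\bigr)=f^{*}U=\bigvee_i f^{*}C_i$. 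I expect the main obstacle to be bookkeeping rather than conceptual: checking that $f^{*}$ genuinely commutes with $\fcoprod$ (the precise content of disjunctivity) and that the image-of-coproduct computes the categorical union; once these are in place, the regularity input from \cref{lem:repi-pb} makes pullback-stability formal.
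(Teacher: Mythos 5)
Your proof is correct, but it follows a genuinely different route from the paper. The paper's proof rests on an explicit combinatorial description of subobjects (its Lemma~\ref{lem:cmp-subobject}): $\operatorname{Sub}(C)$ is identified with the poset of downward-closed, finite-union-closed subdiagrams $\Comp'\subseteq\Comp$, after which unions are computed as the finite-union closure of the union of subdiagrams, the pullback of a subobject along $f$ is computed explicitly as $\Comp_Y\cap f^{-1}(\Comp')$, and stability follows from the set-theoretic facts that preimages commute with unions and intersections distribute over unions. You instead argue formally: monomorphisms are exactly the injections (via faithfulness of $\Cmp\to\Set$ and the one-point compactological sets, which is a correct use of \ref{DFN-COMP-0}(1) and (4)), smallness follows by embedding $\operatorname{Sub}(C)$ into $\Pow(\Pow(X))$ through canonical representatives, unions are images of maps out of coproducts using the (regular epi, mono) factorization supplied by \cref{lem:repi-pb}, and pullback stability is formal from the pullback-stability of coproducts (\cref{prop:disj}), of regular epimorphisms (\cref{def:regularity}(3)), and of monomorphisms. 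Your argument is the standard proof that a well-powered regular and infinitary extensive category with coproducts is infinitary coherent, so it is shorter given the structural results already established and it generalizes beyond $\Cmp$; what it does not deliver is the paper's concrete description of subobjects and joins in terms of the compactology, which is of independent interest for working inside $\Cmp$. Incidentally, your smallness argument is more robust than the paper's own one-line justification (an injection $\operatorname{Sub}_{\Cmp}(C)\to\operatorname{Sub}_{\Set}(U(C))$, which is problematic since distinct subobjects can share the same underlying subset while carrying different compactologies); your injection into $\Pow(\Pow(X))$ avoids this issue because the compactology determines both the underlying subset and, by \cref{DFN-COMP-4}, the topologies.
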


To prove \cref{prop:cmp-coherent}, we introduce the following lemma.

\begin{lem}\label{lem:cmp-subobject}
    For a compactological set $C = (X, \Comp)$ the poset $\operatorname{Sub}(C)$ is isomorphic to the poset of those subdiagrams $\Comp' \subseteq \Comp$, which are downward closed and contain all finite unions of their elements.
\end{lem}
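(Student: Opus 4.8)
The plan is to identify subobjects of $C=(X,\Comp)$ with their ``image compactologies'' sitting inside $\Comp$. First I would record that monomorphisms in $\Cmp$ are exactly the morphisms whose underlying map of sets is injective: the forgetful functor $U\colon\Cmp\to\Set$ is faithful and, as noted in the discussion following \cref{DFN-CMP-CAT}, is a right adjoint, so it both preserves and reflects monomorphisms. Consequently every subobject $[m]$ of $C$ has a rigid representative. Given a mono $m\colon C'=(X',\Comp')\to C$, its underlying map is injective, so corestricting to the image $A:=m(X')\subseteq X$ and transporting the compactology along the bijection $X'\cong A$ produces an isomorphic representative; I would therefore assume from the outset that $m$ is the inclusion of a subset $A\subseteq X$ carrying a compactology $\Comp'$. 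For each $K\in\Comp'$ the morphism condition forces $K\in\Comp$ (as a subset of $X$) and makes the identity $(K,\uptau_K^{\Comp'})\to(K,\uptau_K^{\Comp})$ a continuous bijection between compact Hausdorff spaces, hence a homeomorphism; thus $\uptau_K^{\Comp'}=\uptau_K^{\Comp}$ and $\Comp'\subseteq\Comp$ is an honest subcollection carrying the induced topologies, with $A=\bigcup_{K\in\Comp'}K$ recovered as the union of $\Comp'$.

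Next I would check that the subcollections $\Comp'\subseteq\Comp$ arising this way are precisely the downward-closed, finite-union-closed ones. Closure under finite unions is immediate from \cref{FIRST-LEM} applied to the compactology $\Comp'$. For downward closure, take $K\in\Comp'$ and $L\in\Comp$ with $L\subseteq K$; since $\uptau_K^{\Comp}=\uptau_K^{\Comp'}$ and $L$ is $\uptau_K^{\Comp}$-closed (it lies in $\Comp$ and is contained in $K$, cf.\ the description of closed subsets in \cref{LEM-SHORTCUT}(1)), the set $L$ is a compact subset of $(K,\uptau_K^{\Comp'})$ and hence $L\in\Comp'$ by \ref{DFN-COMP-0}(4).

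Conversely, I would verify that any downward-closed, finite-union-closed $\Comp'\subseteq\Comp$ is a compactology on $A:=\bigcup\Comp'$ by checking \ref{DFN-COMP-4}(1)--(6): condition (1) holds since $\varnothing$ is the empty union, (2) holds because $A$ is covered by construction, (3) is the union hypothesis, and (4) follows because $\bigcap_{i}K_i\in\Comp$ is contained in any $K_{i_0}\in\Comp'$ and is thus captured by downward closure; conditions (5) and (6) are inherited verbatim from the same properties of $\Comp$, using downward closure to place the resulting smaller sets back into $\Comp'$. Once $\Comp'$ is known to be a compactology, the same computation of closed subsets shows $\uptau_K^{\Comp'}=\uptau_K^{\Comp}$ for every $K\in\Comp'$, so the inclusion $A\into X$ is a morphism, indeed a mono. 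These two assignments are mutually inverse, yielding a bijection between $\operatorname{Sub}(C)$ and the stated set of subdiagrams.

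Finally I would verify that this bijection is an isomorphism of posets. For representatives given by inclusions $A_1\into X$ and $A_2\into X$ with compactologies $\Comp_1,\Comp_2$, the relation $[m_1]\le[m_2]$ means $m_1$ factors through $m_2$, i.e.\ the inclusion $A_1\into A_2$ is a morphism in $\Cmp$; by the morphism condition this holds exactly when every $K\in\Comp_1$ lies in $\Comp_2$, that is, $\Comp_1\subseteq\Comp_2$, the converse implication reusing that the induced topologies agree. Hence the correspondence is both order-preserving and order-reflecting. I expect the only genuinely delicate point to be the reduction to inclusion representatives with matching topologies in the first step---everything afterwards is a transcription of the compactology axioms of \cref{DFN-COMP-4}---so I would take special care there that the compact-to-Hausdorff rigidity argument is applied to the correct pair of topologies $\uptau_K^{\Comp'}$ and $\uptau_K^{\Comp}$.
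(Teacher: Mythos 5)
Your proposal is correct and takes essentially the same route as the paper's proof: reduce each subobject to a rigid representative given by a subset inclusion, observe that the morphism condition forces the sub-compactology to be a downward-closed, finite-union-closed subdiagram of $\Comp$ carrying the induced topologies, verify conversely via \cref{DFN-COMP-4} that every such subdiagram yields a compactological subobject, and check the correspondence is an order isomorphism. The only cosmetic differences are that you make explicit the fact that monomorphisms have injective underlying maps (via the faithful right-adjoint forgetful functor, which the paper uses implicitly) and that you identify the two topologies on each $K\in\Comp'$ by compact-to-Hausdorff rigidity where the paper computes with closed sets directly.
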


\begin{proof}
    It follows directly from \cref{DFN-COMP-4} that any such subdiagram $\Comp'$ is a valid compactology on the set $\bigcup\Comp' =: X'$ that it covers. That the inclusion map $(X', \Comp') \into (X, \Comp)$ is continuous follows immediately as a set $M\subseteq X$ is closed iff $\Comp\cap M\subseteq \Comp$, which implies that $X'\cap M \cap \Comp' = M\cap\Comp' \subseteq \Comp'$ by the fact that $\Comp'$ is downwards closed in $\Comp$. That it preserves the compactology is by definition. This constructs a canonical map from the poset of subdiagrams to the poset of compactological subobjects, which can be seen to be monotone by application of the same argument to $\Comp'\subseteq \Comp'' \subseteq \Comp$.
    
    \smallskip

    %We demonstrate that any monomorphism $f\colon (X', \Comp') \into (X, \Comp)$ is determined up to isomorphism by the subdiagram $f(\Comp') \subseteq \Comp$ and that this subdiagram is of the desired form, which implies that this canonical map is an bijection. We also show that the existence of a monomorphism $(X', \Comp') \into (X'', \Comp'')$ making the evident diagram of inclusions commute, implies $f(\Comp')\subseteq g(\Comp'')$ for the respective monomorphisms $f$ and $g$.

    %\smallskip
    
    To show that this map is an isomorphism of posets, we may assume without loss of generality that for an arbitrary monomorphism $f\colon (X', \Comp')\into (X, \Comp)$ the set $X'$ is a subset of $X$, because compactologies may be isomorphically pushed and pulled along bijections and hence $\Comp'$ may be pushed along $f$ to $\ran f\subseteq X$, such that the inclusion remains a morphism of compactological sets.
    With this it follows that $\Comp'$ is a subdiagram of $\Comp$ as the inclusion map $f$ must fulfil $\Comp' = f(\Comp') \subseteq \Comp$.

    \smallskip
    
    In particular, for a chain $(X', \Comp') \into (X'', \Comp'') \into (X, \Comp)$ of monomorphisms on subset inclusions we have $\Comp'\subseteq \Comp''$.
    Now, using the fact that continuity of maps between compactological spaces may be checked on the elements of the compactology, the requirement that $f$ is continuous amounts to requiring
    $$
   \forall\:K\in\Comp'\colon  f|_K^{-1}(\Comp \cap f(K)) = \Comp\cap K \subseteq \Comp' \cap K.
    $$
    Hence, we obtain $\Comp\cap K \subseteq \Comp'\cap K \subseteq \Comp\cap K$ from the fact that $K$ lies in both compactologies and compactologies are stable under intersection. Rephrasing this we get that $\Comp'\subseteq\Comp$ must be downwards closed.
    Further, because $\Comp'$ covers $X'$, $X' = \bigcup \Comp'$ uniquely determines the set $X'$ and because $\Comp'$ is a compactology it is closed under finite unions. Accordingly, any monomorphism $f\colon (X', \Comp') \into (X, \Comp)$ as above is described by the downwards and finite union closed subdiagram $\Comp'\subseteq\Comp$.
\end{proof}

\begin{proof}[Proof of \protect{\cref{prop:cmp-coherent}}]\leavevmode \textcircled{1} That $\Cmp$ is well-powered, i.e., that the subobject posets of $\Cmp$ are small, follows directly from the fact that the forgetful functor $U\colon \Cmp\into \Set$ induces inclusions $\operatorname{Sub}_{\Cmp}(C)\into \operatorname{Sub}_\Set(U(C))$.

\smallskip

\textcircled{2} From \cref{lem:cmp-subobject} it is evident that any collection of subobjects, given by subdiagrams $(\Comp_i\subseteq \Comp)_{i\in I}$, admits a union of diagrams $\mathscr U := \bigcup_{i\in I} \Comp_i$, which will be again downwards closed in $\Comp$. Closing $\mathscr U$ under finite unions, it determines a unique subobject $(X_{\mathscr U}, \overline{\mathscr U})$ of $C = (X, \Comp)$ and because the subdiagram $\Comp'$ corresponding to any other subobject of $C$ is closed under finite union, $\mathscr U$ embeds into $\Comp'$ if its closure under finite union $\overline{\mathscr U}$ does.

\smallskip

\textcircled{3} Finally, we have to show that these unions are stable under pullback. For this, we describe the pullback of a monomorphism in terms of the above characterization of subobjects as subdiagrams. From continuity of $U\colon \Cmp\to\Set$ we may conclude that on sets the pullback of $(X'\subseteq X, \Comp')\into(X, \Comp)$ along $f: (Y, \Comp_Y)\to (X, \Comp)$ includes as $f^{-1}(X')$ into $Y$. Thus the pullback necessarily corresponds to some downwards and finite union closed subdiagram of $\Comp_Y\cap f^{-1}(X')$. By the universal property, this subdiagram must map to a subdiagram of $\Comp'$ and it must be maximal in this respect. Thus the pullback $f^*(\Comp')$ of the subobject $\Comp' \subseteq \Comp$ is given by $\Comp_Y \cap f^{-1}(\Comp')$. 

\smallskip
    
Now, replacing $(X', \Comp')$ by a family of subdiagrams $(X_i, \Comp_i)$, we have $f^{-1}(\bigcup_{i\in I} \Comp_i) = \bigcup_{i\in I} f^{-1}(\Comp_i)$ and 
\begin{align*}
\Comp_Y\cap \bigcup_{i\in I} f^{-1}(\Comp_i) &= \{K_Y\cap f^{-1}(K_X) \mid K_Y\in\Comp_Y, K_X\in\bigcup_{i\in I} \Comp_i \} \\
&= \bigcup_{i\in I} \{K_Y\cap f^{-1}(K_X) \mid K_Y\in\Comp_Y, K_X\in \Comp_i \} = \bigcup_{i\in I} \Comp_Y \cap \Comp_i.
\end{align*}
From the fact that unions commute with inverse images and that intersections distribute over unions it follows that $\Comp_Y\cap f^{-1}(\overline{\bigcup_{i\in I} \Comp_i}) = \overline{\bigcup_{i\in I} \Comp_Y \cap \Comp_i}$, where $\overline{(-)}$ denotes the closure under finite unions. From the description of the subobject-union $\bigvee_{i\in I} \Comp_i$ of the subobjects $(X_i, \Comp_i)$ in terms of their diagrams it follows that $f^*(\bigvee_{i\in I} (X_i, \Comp_i)) = \bigvee_{i\in I} f^*((X_i, \Comp_i))$.
\end{proof}

\smallskip

Aggregating the results of this section, the next logical question is how close the category of $\upkappa$-compactological sets is to forming a topos. It is obvious that $\Cmp_\upkappa$ admits a generator in the guise of the one point space. It is well-powered and coherent by the above proposition.
Finally it is infinitary extensive, as this is a weaker version of the disjunctivity proven in \cref{prop:disj}. This implies according to Giraud's axioms that $\Cmp_\upkappa$ is a (Grothendieck) topos if and only if it is Barr-exact, i.e., if and only if every internal equivalence relation is a kernel pair of some morphism. Unsurprisingly, we obtain that this cannot be the case.

\smallskip

To see this, assume that $1\into\Omega$ is a subobject classifier, which would necessarily exist if $\Cmp_\upkappa$ was a topos. Then any distinguished subobject $C\times_\Omega 1$ must be regular and closed, being the pullback of a regular monomorphism with closed range. %image. 
However, the open sets of any cgwh-space are cgwh and the functor $\upiota$ includes them into $\Cmp$ to form a counterexample. This fact is closely related to the coequalizer of an equivalence relation on a compactological set corresponding to the quotient by the closure of the relation. 

\smallskip

However, one may complete $\Cmp_\upkappa$ to an exact category by passing to the appropriate category of congruences in $\Cmp_\upkappa$. For a detailed review of, among others, regular categories and their completion to exact categories we refer to \cite{Menni2000ExactCompletions}. We will see that this completion is in fact the (Grothendieck) topos of $\upkappa$-condensed sets. Because compactological sets are not locally cartesian closed\,---\,for the same reason that cgwh-spaces fail to be locally cartesian closed\,---\,this provides a naturally occuring example that the assumptions of \cite[Thm 11.3.3]{Menni2000ExactCompletions} are only sufficient and not necessary. As a word of caution we note here that the ex/reg completion of $\Cmp$ is \emph{not} a topos as it lacks a subobject classifier.

\begin{dfn}[\protect{\cite[Dfn 2.5.1, Sec 3.4]{Menni2000ExactCompletions}}]\label{def:ex/reg} Let $\catC$ be a regular category according to \cref{def:regularity}.
\begin{enumerate}
\item[(1)] $\catC$ is called \emph{Barr-exact} if every internal equivalence relation is a kernel pair.

\vspace{2pt}

\item[(2)]  The \emph{ex/reg completion} of $\catC$ is a Barr-exact category $\catC_{\operatorname{ex/reg}}$ together with a fully faithful and regular functor $\catC\into \catC_{\operatorname{ex/reg}}$, such that for any other Barr-exact category $\catD$ the precomposition with the inclusion functor induces an equivalence between the categories $\Fun_{\operatorname{reg}}(\catC, \catD)$ and $\Fun_{\operatorname{reg}}(\catC_{\operatorname{ex/reg}}, \catD)$ of regular functors.  \hfill\diam
\end{enumerate}    
    %Let $\catC$ be a regular category according to \cref{def:regularity}. It is called \emph{(Barr-) exact} if every internal equivalence relation is a kernel pair.

    %The ex/reg completion of $\catC$ is a (Barr-)exact category $\catC_{\operatorname{ex/reg}}$ together with a fully faithful and regular functor $\catC\into \catC_{\operatorname{ex/reg}}$, such that for any other (Barr-)exact category $\catD$ the precomposition with the inclusion functor induces an equivalence between the categories $\Fun_{\operatorname{reg}}(\catC, \catD)$ and $\Fun_{\operatorname{reg}}(\catC_{\operatorname{ex/reg}}, \catD)$ of regular functors.  \hfill\diam
\end{dfn}

\smallskip

The ex/reg completion is necessarily unique up to a canonical equivalence of categories.
By the fact that exact categories are a full subcategory of the 2\nobreakdash-category of regular categories, it follows that any exact category is its own ex/reg completion.
It has been shown by Succi Cruciani \cite{Cruciani1975RegEx} that the ex/reg completion of $\catC$ always exists and can be constructed as a category of congruences in $\catC$ with function-like relations between objects. It follows from this construction that whenever a regular subcategory $\catC\subseteq\catD$ of a regular category $\catD$ is closed under finite limits and subobjects, $\catC_{\operatorname{ex/reg}}$ will be contained as a full subcategory in $\catD_{\operatorname{ex/reg}}$, as any congruence in $\catC$ will be a congruence in $\catD$ and any relation in $\catD$ between two congruences from $\catC$ lifts to a relation in $\catC$. Finally, pullbacks in $\catC$ can be computed in $\catD$ and thus the two categories have the same notion of composition of relations. 

\smallskip

In this light we return to \cref{thm:cmp-condqs} to obtain a characterization of condensed sets as formal quotients of compactological sets, which, in the $\upkappa$-restricted case, also captures the concrete failure of $\Cmp_\upkappa$ to be a (Grothendieck) topos. As remarked above, $\Cmp_{\operatorname{ex/reg}}$ does not form a topos.

\thmcmpregex

%\begin{thm}\label{thm:cmp-reg-ex}
 %   The category $\Cond_\upkappa$ of $\upkappa$-condensed sets is the \emph{ex/reg}-completion of the regular category $\Cmp_\upkappa$ of $\upkappa$-compactological sets to a Barr-exact category.
  %  It follows that $\upkappa$-condensed sets are, up to equivalence, compactological equivalence relations with those compactological relations between them that are function-like and constant on equivalence classes.
   % \missing: Reference
%\end{thm}

\begin{proof}\textcircled{1}
    By the density theorem \cite[Thm 6.5.7]{CatContext} any (pre)sheaf is a canonical colimit of representables. Using the usual formula for colimits it follows that any condensed set is a coequalizer of a coproduct of representables and hence, by \cref{cor:ind-condqs}, a coequalizer of a compactological set. Using further the regularity of sheaf-categories, this coequalizer may be expressed as the coequalizer of its kernel pair and accordingly as a quotient by a congruence. This congruence is quasiseparated as %by the simple fact that 
    it is a subobject of a product of a quasiseparated objects.

    \smallskip

    By the fact that the ex/reg completion is monotone with respect to `nice' full subcategories, as remarked previously, $(\Cmp_\upkappa)_{\operatorname{ex/reg}}$ canonically embeds into $(\Cond_\upkappa)_{\operatorname{ex/reg}}$, because quasiseparated condensed sets are closed under all limits and subobjects.
    Furthermore, the ex/reg completion is idempotent and hence Barr-exactness of the Grothendieck topos $\Cond_\upkappa$ implies that $(\Cond_\upkappa)_{\operatorname{ex/reg}} \cong \Cond_\upkappa$. We may derive that the ex/reg completion of $\Cmp_\upkappa$ is equivalent to a full subcategory of $\Cond_\upkappa$. However, by the initial remark, $\Cond_\upkappa$ is spanned by quotients of such congruences, which means that the functor $(\Cond_\upkappa)_{\operatorname{ex/reg}}\to\Cond_\upkappa$ must also be essentially surjective. 

    \smallskip
    
    %That the functors $\Cmp_\upkappa\into(\Cmp_\upkappa)_{\operatorname{ex/reg}}$ and $\Cmp_\upkappa\into\Cond_\kappa$ agree up to natural isomorphism with respect to this equivalence follows directly from the fact that they must agree on the objects $C\in\Cmp_\upkappa$, embedded as $\Updelta_C\into C\times C$ and $\yo(C)$ respectively, because on functor of the equivalence sends $\Updelta_C\into C\times C$ to $\yo(C)/\yo(\Updelta_C) \cong \yo(C)$. Any other object is identified as a coequalizer of such objects and the equivalence relation trivially preserves these.

    %\smallskip

%\begin{lem}\label{lem:cmp-reg-incl}\NOTE{Sven}{Include Lem 4.10 into Thm 4.9.}
 %   The inclusion $\yo: \Cmp\into\Cond$, embedding compactological sets into their \emph{ex/reg}-completion, preserves coequalizers of kernel pairs and is hence a regular functor.
%\end{lem}
    \textcircled{2} The regularity of the embedding $\yo\colon \Cmp_\upkappa\into\Cond_\upkappa$ follows abstractly. We provide the following explicit proof for the sake of completeness. 
    
    \smallskip
    
    Let $f\colon C\to D$ be a regular epimorphism of compactological sets. Then $f$ is the coequalizer of its kernel pair $C\times_D C$.
    As $\yo$ is a right adjoint it preserves limits and hence the kernel pair of $\yo(f)$ is given by $\yo(C\times_D C)$.
    It is a fact that in any regular category, any morphism $\upphi$ factors as the coequalizer of its kernel pair, followed by a monomorphism representing the coimage $\operatorname{coran}(f)$. In particular, this is true for $\yo(f)$. However, $\cmpuly-$ preserves colimits and hence 
    \[
        \cmpuly{\operatorname{coran}(\yo(f))} = \operatorname{coran}(\cmpuly{\yo(f)}) = \operatorname{coran}(f) = f,
    \]
    as $f$ is a regular epimorphism. Thus, $\yo(f)$ is the quasiseperation of the coequalizer and by the coimage factorization, the coequalizer is a subobject of $\yo(f)$. However, subobjects of quasiseparated condensed sets are quasiseparated and hence their own quasiseperation. It follows that the coequalizer of the kernel pair of $\yo(f)$ is $f$ itself and the claim follows.

    \smallskip

    \textcircled{3} That this result extends to the directed colimits over all cardinals follows from the fact that any object $X\in\Cond$ arises as $\upiota_\upkappa(X)$ for some $\upiota_\upkappa\colon \Cond_\upkappa\into\Cond$ and that $\upkappa$ may be chosen large enough as that it admits an epimorphism from some $\upkappa$\nobreakdash-compactological set $C$ for which the inclusion $\yo(C)$ of $C$ into $\Cond$ factors over $\Cond_\upkappa$. In this case $X$ arises as the coequalizer of a compactological congruence $C'\rightrightarrows C$, both in $\Cond_\upkappa$ and in $\Cond$. In the same manner as in \cref{thm:cmp-condqs} we construct a cone with tip $\Cond$ under the directed diagram of all ex/reg completions $(\Cmp_\upkappa)_{\operatorname{ex/reg}}$, such that the arrow out of the colimit is fully faithful. By the above argument this functor will then also be essentially surjective.
\end{proof}

\medskip

%\printbibliography[heading=bibintoc]
 %\bibliographystyle{amsplain}
 %\bibliography{Ref-cmp}

\begin{thebibliography}{HKRW23}


\bibitem[\'{A}sg21]{Asgeir} D.~\'{A}sgeirsson, \emph{The {F}oundations of {C}ondensed {M}athematics}, MSc thesis, Universit\'e de Paris, 2021.

 
\bibitem[Bam19]{B19} F.~Bambozzi, \emph{Theorems {A} and {B} for dagger quasi-{S}tein spaces}, The Quarterly Journal of Mathematics \textbf{70} (2019), no. 2, 703--735.

\bibitem[BB04]{BorcBourn04} F.~Borceux and D.~Bourn, \emph{Mal'cev,
  Protomodular, Homological and Semi-Abelian Categories}, Mathematics and its Applications vol.~566, Kluwer Academic Publishers, Dordrecht, 2004.



\bibitem[BB16]{BOBB} F.~Bambozzi and O.~{Ben-Bassat}, \emph{Dagger geometry as Banach algebraic geometry}, Journal of Number Theory \textbf{162} (2016), 391--462.


\bibitem[BBK18]{BBBK18} F.~Bambozzi, O.~{Ben-Bassat} and K.~Kremnizer, \emph{Stein domains in {Banach} algebraic geometry}, J. Funct. Anal. \textbf{274} (2018), no.~7, 1865--1927.

\bibitem[BC23]{Commelin} R.~Barton and J.~Commelin, \emph{Condensed Sets}, Lecture notes from the Crash Course Condensed Mathematics, https://math.commelin.net/2023/condensed/compactological.pdf, accessed 15/12/2025.

\bibitem[BGO71]{BarrGrilletOsdol} M.~Barr, P.~Grillet and D.~H.~Osdol, \emph{Exact Categories
  and Categories of Sheaves}, Lecture Notes in Mathematics vol.~236, Springer-Verlag, Berlin, 1971.
  
\bibitem[BH19]{Barwick} C.~Barwick and P.~Haine, \emph{Pyknotic objects, {I}. {B}asic notions}, arXiv:1904.09966v2, 2019.

\bibitem[BH11]{BaezHoff} J.~C.~Baez and A.~E.~Hoffnung, \emph{Convenient categories of smooth spaces}, Trans. Am. Math. Soc. \textbf{363} (2011), no. 11, 5789--5825.

\bibitem[BKK24]{ben2024perspective} O.~{Ben-Bassat}, J.~Kelly and K.~Kremnizer, \emph{A perspective on the foundations of derived analytic geometry}, arXiv:2405.07936, 2024.

\bibitem[Bor08]{Borceux} F.~Borceux, \emph{Handbook of Categorical Algebra. {Volume} 1: {Basic} Category Theory}, Encycl. Math. Appl. vol.~50, Paperback reprint of the hardback ed.~1994, Cambridge University Press, 2008.

\bibitem[Buc65]{Buchwalter65} H.~Buchwalter, \emph{Espaces vectoriels bornologiques}, Publ. D\'ep. Math. (Lyon) \textbf{2} (1965), 1--53.

\bibitem[Buc69a]{BuchwalterDiss} H.~Buchwalter, \emph{Topologies, {B}ornologies et {C}ompactologies}, PhD thesis, Lyon, 1969.

\bibitem[Buc69b]{Buchwalter69} H.~Buchwalter, \emph{Topologies et compactologies}, Publ. D\'ep. Math. (Lyon) \textbf{6} (1969), no. 2, 1--74.

\bibitem[CS22]{ScholzeComplexGeometry} D.~Clausen and P.~Scholze, \emph{Condensed mathematics and complex geometry}, Lecture notes, Bonn--Copenhagen, 2022, https://people.mpim-bonn.mpg.de/scholze/Complex.pdf, accessed 15/12/2025.

\bibitem[Die08]{Dieck} T.~{tom Dieck}, \emph{Algebraic Topology}, European Mathematical Society (EMS), Z\"urich, 2008.

\bibitem[Han23]{Hanson} J.~E.~Hanson, \emph{Is the category of condensed sets equivalent the ex/reg
  completion of the category of compactological spaces?}, Comment to the answer by J.~Commelin postet Sep 23, 2023 at 18:14, https://mathoverflow.net/questions/444608/reference-request-for-condensed-math, accessed 15/12/2025.

\bibitem[HKRW23]{HKRW} R.~Henrard, S.~Kvamme, A.-C.~van Roosmalen and S.-A.~Wegner, \emph{The left heart and exact hull of an additive regular category}, Rev. Mat. Iberoam. \textbf{39} (2023), no. 2, 439--494.

\bibitem[HN70]{Hogbe70} H.~Hogbe-Nlend, \emph{Completion, tenseurs et nuclearite en bornologie}, J. Math. Pures Appl. \textbf{49} (1970), 193--288.

\bibitem[HN71]{Hogbe-French} H.~Hogbe-Nlend, \emph{Th\'eorie des Bornologies et Applications}, Lecture Notes in Mathematics vol.~213, Springer-Verlag, Berlin-New York, 1971.

\bibitem[HN77]{Hogbe} H.~Hogbe-Nlend, \emph{Bornologies and {F}unctional {A}nalysis}, North-Holland Publishing Co., Amsterdam-New York-Oxford, 1977.

\bibitem[Hou73]{Houzel} C.~Houzel, \emph{S\'eminaire {B}anach, tenu \`a{} l'\'Ecole Normale Sup\'erieure en 1962--1963}, Lecture Notes in Mathematics vol.~277, Springer-Verlag, Berlin-New York, 1972.

\bibitem[Kel55]{Kelley} J.~L.~Kelley, \emph{General Topology}, The {University} {Series} in {Higher} {Mathematics}, {D}. van {Nostrand} {Co}., {Inc}., {New} {York}, 1955.

\bibitem[Kel86]{Kelly86Totality} G.~M.~Kelly, \emph{A survey of totality for enriched and ordinary categories}, Cahiers de Topologie G\'eom. Diff\'erentielle Cat\'eg. \textbf{27}  (1986), no. 2, 109--132.

\bibitem[KKM22]{KKM} J.~Kelly, K.~Kremnizer and D.~Mukherjee, \emph{An analytic {H}ochschild-{K}ostant-{R}osenberg theorem}, Adv. Math. \textbf{410} (2022), 108694.

\bibitem[KNW25]{KNW25} J.~Kelly, L.~Neyt and S.-A.~Wegner, \emph{Bornological LB-spaces and idempotent adjunctions}, arXiv:25 11.08528, 2025.

\bibitem[KS05]{KashiwaraShapira05} M.~Kashiwara and P.~Schapira, \emph{Categories and Sheaves}, Grundlehren der mathematischen Wissenschaften vol.~332, Springer, Berlin-Heidelberg, 2005.

\bibitem[{LeS}25]{LeStum} B.~LeStum, \emph{An Introduction to Condensed Mathematics}, Lecture notes, https://www.bernardles tum.com, accessed 13/11/2025.

\bibitem[{Lew}78]{Lewis} L.~G.~{Lewis Jr.}, \emph{The stable category and generalized {T}hom spectra: Appendix A}, Ph.D. Thesis, University of Chicago, 1978.

\bibitem[McC69]{McCord} M.~C.~McCord, \emph{Classifying spaces and infinite symmetric products}, Trans. Am. Math. Soc. \textbf{146} (1969), 273--298.

\bibitem[Men00]{Menni2000ExactCompletions} M.~Menni, \emph{Exact Completions and Toposes}, PhD Dissertation,  University of Edinburgh, College of Science and Engineering, School of Informatics, 2000.

\bibitem[Mey04]{Meyer2} R.~Meyer, \emph{Bornological versus topological analysis in metrizable spaces}, in: Banach algebras and their applications,  Contemp. Math. vol.~363, 249--278, Amer. Math. Soc., Providence, RI, 2004.

\bibitem[ML71]{MacLane} S.~Mac~Lane, \emph{Categories for the Working Mathematician}, Graduate Texts in Mathematics vol.~5, Springer Science \& Business Media, 1971.

\bibitem[Rez25]{Rezk} C.~Rezk, \emph{Compactly Generated Spaces}, https://www.rezk.web.illinois.edu/cg-spaces-better.pdf, accessed on 2025/09/25.

\bibitem[Rie17]{CatContext} E.~Riehl, \emph{Category Theory in Context}, Dover Publications, 2017.

\bibitem[Sav25]{Savage} R.~Savage, \emph{Stacks in Derived Bornological Geometry}, PhD Dissertation, University of Oxford, 2025.

\bibitem[Sch18]{Schwede} S.~Schwede, \emph{Global Homotopy Theory}, New Math. Monogr. vol.~34, Cambridge University Press, Cambridge, 2018.

\bibitem[Sch19a]{ScholzeCondensedMath} P.~Scholze, \emph{Condensed mathematics}, Lecture notes based on joint work with D. Clausen, 2019, https://www.math.uni-bonn.de/people/scholze/Condensed.pdf, accessed 15/12/2025.

\bibitem[Sch19b]{ScholzeAnalyticGeometry} P.~Scholze, \emph{Lectures on analytic geometry}, Lecture notes based on joint work with D. Clausen, 2019, https://www.math.uni-bonn.de/people/scholze/Analytic.pdf, accessed 15/12/2025.

\bibitem[Shu08]{Shulman} M.~A.~Shulman, \emph{Set theory for category theory}, arXiv:0810.1279v2, 2008.

\bibitem[Ste24]{Stempfhuber} T.~Stempfhuber, \emph{Bornological Algebras in Exotic Derived Categories and
  Condensed Mathematics}, PhD Dissertation, Universität Hamburg, 2024.

\bibitem[Str09]{CGWH} N.~P.~Strickland, \emph{The Category of CGWH Spaces}, 2009, https://ncatlab.org/nlab/files/Strickland CGHWSpaces.pdf, accessed on 2025/09/25.

\bibitem[{Suc}75]{Cruciani1975RegEx} R.~{Succi Cruciani}, \emph{La teoria delle relazioni nello studio di categorie regolari e di categorie esatte}, Riv. Mat. Univ. Parma, IV. Ser. \textbf{1} (1975), 143--158.

\bibitem[SW78]{StreetWaltersYoneda} R.~{Street} and R.~Walters, \emph{Yoneda structures on 2-categories}, Journal of Algebra \textbf{50} (1978), no. 2, 350--379.

\bibitem[Wae60]{Waelbroeck60} L.~Waelbroeck, \emph{Etude {S}pectrale des {A}lgèbres Complètes}, Mémoires de l'Académie royale de Belgique. Classe des Sciences \textbf{31} (1960), no. 7, 1--142.

\bibitem[Wae67]{Waelbroeck1967} L.~Waelbroeck, \emph{Some theorems about bounded structures}, Journal of Functional Analysis \textbf{1} (1967), no. 4, 392--408.

\bibitem[Wae71]{Waelbroeck} L.~Waelbroeck, \emph{Topological Vector Spaces and Algebras}, Lecture Notes in Mathematics vol.~230, Springer, Berlin-Heidelberg, 1971.

\bibitem[Weg17]{Wegner17} S.-A.~Wegner, \emph{The heart of the {B}anach spaces}, J. Pure Appl. Algebra \textbf{221} (2017), no. 11, 2880--2909.


\bibitem[Woo82]{Wood} R.~J.~Wood, \emph{Some remarks on total categories}, Journal of Algebra \textbf{75} (1982), no. 2, 538--545.
\end{thebibliography}

\end{document}